\def \leq {\leqslant}
\def \le {\leq}
\def \geq {\geqslant}
\def\e{\varepsilon}
\def \ge {\geq}
\def\R{\mathbb R}
\def\S{\mathbb S}
\def\N{\mathbb N}
 \def \la {\langle}
 \def \ra {\rangle}
\def\l{\ell}
\def\g{\gamma}
\def \ds {\displaystyle}
\def \d {\mathrm{d}}
\def \lm {\bm{m}}
\def \lM {\mathds{M}}
\def \lD {\mathds{D}}
\def \Q {\mathcal{Q}}
\def\fet{f_{\ast}}
\def\vet{v_{\ast}}
\def\grad{\nabla}
\newtheorem{theo}{Theorem}[section]
\newtheorem{prop}[theo]{Proposition}
\newtheorem{lem}[theo]{Lemma}
\newtheorem{rmq}[theo]{Remark}
\newcommand{\vertiii}[1]{{\left\vert\kern-0.25ex\left\vert\kern-0.25ex\left\vert #1  
    \right\vert\kern-0.25ex\right\vert\kern-0.25ex\right\vert}}                      
\newcommand{\verti}[1]{{\left\vert\kern-0.25ex\left\vert\kern-0.25ex\left\vert #1    
    \right\vert\kern-0.25ex\right\vert\kern-0.25ex\right\vert}}	
\def \ind {\mathbf{1}}
\numberwithin{equation}{section}
\title[]{Conditional integrability and stability for the homogeneous Boltzmann equation with very soft potentials}
\author{R. Alonso}
\address{Texas A\&M University at Qatar, Division of Arts and Sciences, Education City, Doha, Qatar.} \email{ricardo.alonso@qatar.tamu.edu}
\author{P. Gervais}
\address{Universit\'e Lille, Inria, CNRS, UMR 8524 - Laboratoire Paul Painlev\'e, F-59000 Lille, France}\email{pierre.gervais@inria.fr}
\author{B.  Lods}
\address{Universit\`{a} degli
Studi di Torino \& Collegio Carlo Alberto, Department of Economics, Social Sciences, Applied Mathematics and Statistics ``ESOMAS'', Corso Unione Sovietica, 218/bis, 10134 Torino, Italy.}\email{bertrand.lods@unito.it}
\begin{document}

\maketitle
 
\begin{abstract} We introduce a practical criterion that justifies the propagation and appearance of $L^{p}$-norms for the solutions to the spatially homogeneous Boltzmann equation with very soft potentials without cutoff.  Such criterion also provides a new conditional stability result for classical solutions to the equation. All results are quantitative. Our approach is inspired by a recent analogous result for the Landau equation derived in \cite{ABDL-PS} and generalises existing conditional results related to higher integrability properties and stability of solutions to the Boltzmann equation with very soft potentials.

\medskip
\noindent \textrm{Keywords:} Boltzmann equation, very soft potentials, $\e$-Poincar\'e inequality, commutator estimates.

\medskip
\noindent \textrm{Mathematics Subject Classification:} 35Q35 -- 35B65 -- 76P05--76N10.

\end{abstract}

\tableofcontents

\section{Introduction}

\subsection{The spatially homogeneous Boltzmann equation} The Boltzmann equation describes the evolution of the density $f=f(t,v)\geq0$ of gas at time $t\geq0$ with particle velocity $v\in \R^{d}$.  It is given by
\begin{subequations}\label{eq:Boltz}
\begin{equation}\label{eq:Boltz1}
\partial_{t} f(t,v) = \Q(f(t,\cdot),f(t,\cdot))(v),  
\qquad t\geq0, \quad v \in \R^{d}, \quad d \ge 2,
\end{equation}
supplemented with the initial condition
\begin{equation}\label{eq:Init}
f(t=0,v)=f_{\mathrm{\rm in}}(v), \qquad v \in \R^{d}.\end{equation}
\end{subequations}
The integral operator $\Q$ denotes the Boltzmann collision operator which is given by
\begin{equation}\label{op:Bol}
\Q(g,f)(v) := \int_{\R^{d}}\d \vet \int_{\S^{d-1}}\bigg(g(\vet')f(v')-g(\vet)f(v)\bigg)B(v-\vet, \sigma)\d\sigma\,.
\end{equation}
Here above, $\sigma \in \S^{d-1}$ is the scattering direction after a particle-particle collision with velocities $v$ and $\vet \in \R^{d}$.  The post collisional particle velocities $v'$ and $v'_*$ are given by
$$v'=v-u^{-}, \qquad \vet'=\vet+u^{-},\quad u=v-\vet, \quad u^{\pm}=\frac{u\pm |u|\sigma}{2}, \qquad \widehat{u}=\frac{u}{|u|}.$$
In this document we work with a collisional kernel $B(u,\sigma)$ that take the form
$$B(u,\sigma)=|u|^{\g}b(\widehat{u}\cdot \sigma)=|u|^{\g}b(\cos\theta), \qquad \cos \theta=\widehat{u} \cdot \sigma\,, \quad u \in \R^{d},\quad \sigma \in \S^{d-1}\,.$$
Moreover, in the present contribution, we consider the so-called without cutoff assumption
\begin{subequations}\label{eq:Noyau}
\begin{equation}\label{eq:b0s}\sin^{d-2}(\theta)b(\cos \theta) \sim b_{0}\,\theta^{-1-2s}, \qquad \text{ as } \quad \theta \sim 0\,,\end{equation}
for some $b_{0}> 0$ in the regime of very soft potentials
\begin{equation}\label{eq:g2s}
\g+2s < 0, \qquad s \in (0,1), \qquad \g > \max\{-4,-d\}.
\end{equation}
\end{subequations}
{For this range the existence of \textit{H-weak solutions} and the linear-in-time propagation of statistical moments is guaranteed, see \cite{carlen}.}  Such kernels are derived from long-range potentials obeying inverse power law $U(r)=r^{1-\mathfrak{p}}$ with $2 < \mathfrak{p} < \infty$ {for the physical three dimensional case}.  In this case,
$$-3 < \g=\frac{\mathfrak{p}-5}{\mathfrak{p}-1} < 1, \qquad 0 < s=\frac{1}{\mathfrak{p}-1} < 1.$$
Notice that, $\g=(\mathfrak{p}-5)s$, and $\g+2s=(\mathfrak{p}-3)s$, so that we are assuming $2  < \mathfrak{p} < 3$.
It is well known that due to the large forward scattering of collision, that is for small scattering angles $\theta\sim0$, a diffusion-like regularisation effect is expected in this case.\medskip

We notice that in the case of hard or moderately soft potentials, that is $\g + 2s \ge 0$, it is known that global smooth solutions exist \cite{Im-Sil}. However, for the case \eqref{eq:g2s} addressed in this work smoothness and uniqueness of solutions to \eqref{eq:Boltz} is an open problem.  Working in this direction,  our objective is to provide a conditional result aiming to:
\begin{enumerate}[1)]
\item showing the appearance of $L^{p}$-integrability of solutions to \eqref{eq:Boltz} as well as the propagation of such integrability if initially assumed.
\item proving the stability of solutions to \eqref{eq:Boltz} in suitable weighted $L^{2}$-spaces and, thus, implying the uniqueness of solutions in such spaces.
\end{enumerate}
The precise statement of these results is in Theorem \ref{mps_g} below.  Let us introduce some useful notation used in the sequel.
\subsection{Notations} For $k \in \R $ and $ p\geq 1$, we define the Lebesgue space {$L^{p}_{k}=L^{p}_{k}(\R^{d})$} through the norm
$$\displaystyle \|f\|_{L^p_{k}} := \left(\int_{\R^{d}} \big|f(v)\big|^p \, 
\langle v\rangle^{k} \, \d v\right)^{\frac{1}{p}}, \qquad L^{p}_{k}(\R^{d}) :=\Big\{f\::\R^{d} \to \R\;;\,\|f\|_{L^{p}_{k}} < \infty\Big\}\, ,$$
where $\langle v\rangle :=\sqrt{1+|v|^{2}}$.  In the particular case $k=0$, we simply denote $\|\cdot\|_{L^p}$ the $L^{p}$-norm. For any $\alpha \geq0$, we define the homogeneous Sobolev spaces $\dot{\mathbb{H}}^{\alpha}$ through the norm
\begin{equation}\label{eq:notaHs} 
\|g\|_{\dot{\mathbb{H}}^{\alpha}}=\left[\int_{\R^{d}}\left|\widehat{g}(\xi)\right|^{2}\,|\xi|^{2\alpha}\d\xi\right]^{\frac{1}{2}}, \qquad \widehat{g}(\xi)=\int_{\R^{d}}\exp\left(-i x \cdot \xi\right)g(x)\d x, \qquad \xi \in \R^{d}.
\end{equation}
and the Sobolev space $\mathbb{H}^{\alpha}$ through the norm $\|\cdot\|_{\mathbb{H}^{\alpha}}^{2}=\|\cdot\|_{\dot{\mathbb{H}}^{\alpha}}^{2}+\|\cdot\|_{L^{2}}^{2}$.  
We refer to \cite{chemin} for more details about Sobolev spaces. We will repeatedly use the fact that, for $k_{1}\geq k_{2} \geq0,$ it holds
$$\|\la\cdot\ra^{k_{2}}g\|_{\mathbb{H}^{\alpha}} \leq C\|\la \cdot \ra^{k_{1}}g\|_{\mathbb{H}^{\alpha}}, \qquad \alpha \geq0$$
for some positive constant $C >0$ depending only on $d,\alpha,k_{1}-k_{2}.$ Notice that such an estimate is not true for the norm associated to the homogeneous space $\dot{\mathbb{H}}^{\alpha}.$ Given $k \in \R$ and $f \in L^{1}_{k}(\R^{d})$, we also define the statistical moments as
$$
\lm_{k}(f)=\int_{\R^{d}}f(v)\langle v\rangle^{k}\d v\,.
$$
Given mass $\varrho >0$, energy $E >0$, and entropy bounds $H >0$, we introduce the subclass $\mathcal{Y}(\varrho,E,H)$ of $L^{1}_{2}(\R^{d}) \cap L\log L(\R^{d})$ as 
\begin{multline}\label{eq:classY}
\mathcal{Y}(\varrho,E,H)=\left\{g \in L^{1}_{2}(\R^{d})\,;\,g \geq0\,;\,\lm_{0}(g) \geq \varrho,\,\,\lm_{2}(g) \leq E\,\right.\\
\left.\,\int_{\R^{d}}g(v)\log g(v)\d v \leq H\right\}.
\end{multline}
This class is a natural space for solutions of the Boltzmann equation.  In the sequel, we will \emph{fix} $\varrho_{\rm in} >0, E_{\rm in} >0, H_{\rm in} >0$ and will use the shorthand notation
$$\mathcal{Y}_{\rm in}=\mathcal{Y}(\varrho_{\rm in},E_{\rm in},H_{\rm in})\,.$$
The initial datum $f_{\rm in}$ associated to \eqref{eq:Boltz} is always assumed to belong to $\mathcal{Y}_{\rm in}$.

Finally, for a measurable mapping $b\,:\,(-1,1) \to \R^{+}$, we identify the function $b$ to a function defined over the unit sphere $\S^{d-1}$ through the identification $\sigma \in \S^{d-1} \mapsto \cos \theta \in (-1,1)$ and write
\begin{align*}
\|b\|_{L^{1}(\S^{d-1})}&=\int_{\S^{d-1}} b(\widehat{u} \cdot \sigma)\d \sigma =\big| \S^{d-2}\big|\int^{\pi}_{0}b(\cos\theta)(\sin\theta)^{d-2}\d\theta\, \qquad \forall \,\widehat{u}  \in \S^{d-1}.
\end{align*}
\subsection{Main results and related literature} In the context of the spatially homogeneous Boltzmann equation without cutoff, the question of the regularity and well-posedness of solutions is well understood in the case of hard potentials corresponding to $\g >0$ \cite{Im-Sil,ricardo} and moderately soft potentials $\g+2s \geq0$ \cite{Im-Sil}.  Global existence of classical smooth solutions is known when $\g+ 2s \geq 0$, see for example \cite{DM,He12,Im-Sil}.  Additionally, measure-valued solutions are known to exist globally when $\g \geq -2$, see \cite{Lu,Mori16}, whereas a very weak notion of solution, based on the entropy dissipation functional, have been constructed in \cite{villH} under the name of $H$-solutions; these solutions exist for a large range of potentials and propagate statistical moments, namely  {$\g \geq -4$} and $0 < s < 1$ as proven in \cite{carlen}.  A new functional inequality for the entropy production derived in \cite{chaker} has shown that $H$-solutions are in fact weak solution for $\g+2s > -2$.  

This work is aimed to discuss the delicate case of very soft potentials for which $\g+2s < 0$ with $\gamma>\max\{-4,-d\}$ and $0<s<1$.  For such negative values of $\g$, the singularities of the collision kernel $B(v-\vet,\sigma)$ in \eqref{op:Bol} are severe and constrain solutions to possess a minimal threshold of integrability and smoothness to be meaningful, at least in a weak sense.  Currently, such minimal threshold has not been proven in an unconditional manner.  It is the aim of this document to provide a conditional integrability condition on solutions that allows higher integrability which, in turn, results in an stability theorem for weak solutions.  Interestingly, for suitable modification of the Boltzmann equation, introduced recently in \cite{snelson}, higher integrability conditions have been unconditionally established yielding a satisfactory well-posedness theory for such a ``isotropic'' Boltzmann model.

For the very soft potentials case, the conditional regularity of solutions to the Boltzmann equation without cutoff has been introduced in the work \cite{Silv} where it is shown that if the solution lies in $L^{\infty}([0,T]; L^{p}_{k}(\R^3))$ for $p$ and $k$ sufficiently large, then it is bounded.  In addition, a suitable rate of appearance of pointwise bounds is provided. More precisely, for a suitable solution $f : (0, T ) \times \R^{d}\to\R$ to \eqref{eq:Boltz} and $p \in \left(\frac{d}{d+\g+2s},\frac{d}{d+\g}\right)$ and $k >0$ large enough,
\begin{equation}\label{eq:Silv}
f(t) \in L^{\infty}((0,T);L^{p}_{k}(\R^{d})) \implies \|f(t)\|_{\infty} \leq C\left(1+t^{-\frac{d}{2sp}}\right)\end{equation}
for some $C >0$ explicitly depending on $\sup_{t\in [0,T]}\|f(t)\|_{L^p}$. 

Attacking the problem from a different angle, as far as regularity of solutions is concerned, a recent result of \cite{GIS} provides a bound on the Hausdorff dimension of the times in which the solution could be singular. Such a result is the analogue of the celebrated \cite{CKN} result for the Navier-Stokes equation that has been recently adapted also to the Landau equation, see Section \ref{sec:Landau} for details. 
 
Let us state the first theorem that establishes a new conditional criteria ensuring the appearance or propagation of $L^{p}$-norms which, arguably, generalises the one in \eqref{eq:Silv}.  Such criterion follows the spirit of the well-known Prodi-Serrin criteria that guarantees the uniqueness of solutions to the Navier-Stokes equation, see the recent result for the Landau equation \cite{ABDL-PS}. {The theorem is stated for classical solutions, in the sense that we work with solutions where all computations can be carry on meaningfully, and must be understood in the context of \textit{a priori estimates}.}
 
\begin{theo}\label{mps_g}
 
 Let $f_{\rm in} \in \mathcal{Y}_{\rm in}$ be given and let $f=f(t,v)$ be a classical solution to Eq. \eqref{eq:Boltz} with 
 $$B(u,\sigma)=|u|^{\g}b(\cos \theta)$$
where $b(\cdot)$ satisfies  \eqref{eq:b0s}--\eqref{eq:g2s}.   
Assume that the solution $f$ satisfies 
\begin{equation}\label{eq:ProSer1}
\langle \cdot \rangle^{|\g|}\,f \in L^{r}\big([0,T]\,;\,L^{q}(\R^{d})\big) \:\: \text{ with }\:\: \frac{2s}{r}+\frac{d}{q}=2s+d+\g\end{equation} 
where $r \in (1,\infty)$ and $q \in \left(\max\left(1,\frac{d}{d+\g+2s}\right), \infty \right)$.  Then, given $p \in (1,\infty)$, the following integrability properties hold:

\begin{enumerate}[(a)]
	\item \textnormal{\textbf{(Propagation of Lebesgue norm)}} If $f_{\rm in} \in L^{p}(\R^{d})$ then
	\begin{equation}\label{eq:propaLp1}
		\sup_{t \in [0,T]}\|f(t)\|_{L^p} \leq \exp\left\{\frac{\bm{C}_{1}}{p}\int_{0}^{T}\left(1+ \|\langle \cdot\rangle^{|\g|}f(t)\|_{L^q}^{r} \right)\d t\right\}\|f_{\rm in}\|_{L^p}\end{equation}
	for some explicit constant $\bm{C}_{1}$ depending on $p,q,s,\g,d$ and $\varrho_{\rm in},E_{\rm in},H_{\rm in}$ but not on $T$ or $\|f_{\rm in}\|_{L^p}$.\\
	
	\item \textnormal{\textbf{(Appearance of Lebesgue norm)}} If moments for $f_{\rm in}$ are assumed, that is
	$$f_{\rm in} \in L^{1}_{\eta_{p}}(\R^{d}), \qquad \eta_{p}=\eta_{p}(s,\g):=\frac{|\g|d}{2s}\left(1-\frac{1}{p}\right),$$
	then, the solution $f=f(t,v)$ satisfies  the following estimate for $t \in (0,T]$
	\begin{equation}\label{Lp-Kpq}
		\|f(t)\|_{L^p} \le \bm{K}_{p,B,q,T} \,\, t^{- \frac{d}{2s} \left(1 - \frac1{p} \right)}\sup_{\tau \in [0,T]}\lm_{\eta_{p}}(\tau),\end{equation}
	where $\bm{K}_{p,B,q,T}$ is explicitly depending on $\ds\int_{0}^{T} \|\langle \cdot\rangle^{|\g|}f(t)\|_{L^q}^{r} \d t$ and also on $p,q,s,\g,d$ and $\varrho_{\rm in}$, $E_{\rm in}$, $H_{\rm in}$.
\end{enumerate}
\end{theo}
We do not elaborate on the appearance of pointwise bounds in the present contribution, however, the appearance of $L^{p}$-bounds for sufficiently large $p>1$ provides an alternative criterion to the result of \cite{Silv} recalled in \eqref{eq:Silv}. Indeed, as long as solutions possess such higher integrity, it is possible to adapt the method used in \cite{ricardo} based on the solution's level sets and deduce the \emph{appearance of $L^{\infty}$-bounds} for solutions to Boltzmann equation.  The method is reminiscent of the De Giorgi's $L^{p}-L^{\infty}$ argument and has been applied in the study of the homogeneous Landau equation in \cite{ABL,GGL24} and would imply, for the Boltzmann equation, that if a solution $f=f(t,v)$ to \eqref{eq:Landau} satisfies \eqref{eq:ProSer1} and suitable $L^{1}$-moments estimates then for any $t_{\star} \in (0,T)$ there exists $K(t_{\star},T) >0$ such that
\begin{equation*}\label{eq:Linfp-intro}
\sup_{t \in [t_{*},T]}\left\|f(t, \cdot)\right\|_{ {\infty}} \leq K(t_{*},T)\,.
\end{equation*}

In addition to the appearance of pointwise bounds, the relevance of the Theorem \ref{mps_g} lies also on the fact that it is central to deduce stability of weak solutions to the Boltzmann equation \eqref{eq:Boltz}.  This was observed in \cite{fournier} in a result that can be roughly summarized as follows.  Finite energy solutions to \eqref{eq:Boltz} lying in 
$$L^{1}([0,T]\,;\,L^{p}(\R^{d})), \qquad p > \frac{d}{d+\g}$$
are unique, see \cite[Corollary 1.5]{fournier}.  The following theorem shows that the condition $f \in L^{1}([0,T],L^{p}(\R^{d}))$ can be replaced with the Prodi-Serrin criterion \eqref{eq:ProSer1}.  
\begin{theo}\label{theo:unique} Assume $d=3$ and let $h_{\rm in},g_{\rm in} \in \mathcal{Y}_{\rm in}$ be given. Let $h=h(t,v),g=g(t,v)$ two classical solutions to Eq. \eqref{eq:Boltz}  associated with initial data
$h(0)=h_{\rm in},$ $g(0)=g_{\rm in}$  
with 
 $$B(u,\sigma)=|u|^{\g}b(\cos \theta)$$
where $b(\cdot)$ satisfies \eqref{eq:Noyau} and $- \frac{3}{2} <\g+2s  <0$.   
Assume that  {$$h_{\rm in}, g_{\rm in} \in L^2_{k+2|\g|}(\R^{d}) \cap L^{1}_{\ell}(\R^{d})$$
 for some $k > 11 + 4s$ and $\ell > k+|\g|$} and that both solutions satisfy the Prodi-Serrin condition:
\begin{equation}\label{eq:ProSergf}
\langle \cdot \rangle^{|\g|} h \, , \langle \cdot \rangle^{|\g|}g \in L^{r}([0,T]\,;\,L^{q}(\R^{d})) \:\: \text{ with }\:\: \frac{2s}{r}+\frac{d}{q}=2s+d+\g\end{equation} 
where $r \in (1,\infty),$ $q \in \left(\frac{d}{d+\g+2s} , \infty \right).$
Then,  there exists $\bm{C}_{T}(h_{\rm in},g_{\rm in}) >0$ depending on $d, \g, T, k,$, $\|h_{\rm in}\|_{L^{1}_{\ell}}, \|g_{\rm in}\|_{L^{1}_{\ell}}$,  $H(h_{\rm in}), H(g_{\rm in})$, and the Prodi-Serrin norm $\left\|\langle \cdot\rangle^{|\g|}h\right\|_{L^{r}_{t} L^q_v }	, \left\|\langle \cdot\rangle^{|\g|}g\right\|_{L^{r}_{t} L^q_v }$ 
such that
\begin{equation}\label{eq:Stability}
\|h(t)-g(t)\|_{L^{2}_{k}}^{2} \leq \bm{C}_{T}\|h_{\rm in}-g_{\rm in}\|_{L^{2}_{k}}^{2} \qquad \forall t \in [0,T].
\end{equation}
In particular, if $h_{\rm in}=g_{\rm in}$, then $h(t)=g(t)$ for all $t \in [0,T].$\end{theo}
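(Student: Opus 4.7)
The plan is to derive a Gr\"onwall inequality for $\|w(t)\|_{L^{2}_{k}}^{2}$ with $w=h-g$, by splitting the difference equation into a coercive contribution from $\Q(h,\cdot)$ and a bilinear contribution $\Q(\cdot,g)$ controlled by the Prodi-Serrin norm of $g$. Bilinearity of the collision operator yields
\begin{equation*}
\partial_{t}w=\Q(h,w)+\Q(w,g),
\end{equation*}
and testing against $w\,\langle v\rangle^{2k}$ produces the energy identity
\begin{equation*}
\tfrac{1}{2}\tfrac{\d}{\d t}\|w(t)\|_{L^{2}_{k}}^{2}=\mathcal{I}_{1}(t)+\mathcal{I}_{2}(t),\qquad \mathcal{I}_{1}=\int_{\R^{d}}\!\Q(h,w)\,w\,\langle v\rangle^{2k}\d v,\quad \mathcal{I}_{2}=\int_{\R^{d}}\!\Q(w,g)\,w\,\langle v\rangle^{2k}\d v.
\end{equation*}

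For $\mathcal{I}_{1}$, I would invoke the weighted coercivity of Alexandre--Desvillettes--Villani--Wennberg type: since $h(t)\in\mathcal{Y}_{0}$ uniformly in $t$ and propagates moments of order $\ell>k+|\g|$, one gets
\begin{equation*}
\mathcal{I}_{1}(t)\le -c_{0}\bigl\|\langle\cdot\rangle^{k+\g/2}\,w(t)\bigr\|_{\dot{\mathbb{H}}^{s}}^{2}+C_{1}\|w(t)\|_{L^{2}_{k+\g/2}}^{2},
\end{equation*}
with $c_{0},C_{1}>0$ depending only on $\varrho_{0},E_{0},H_{0}$ and on the propagated moments of $h$. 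The lower-order term encodes the commutators between $\Q(h,\cdot)$ and the weight $\langle v\rangle^{k}$, which is precisely why the stronger assumption $h_{\rm in},g_{\rm in}\in L^{2}_{k+2|\g|}$ is imposed in the statement.

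The crux of the argument is a bilinear estimate for $\mathcal{I}_{2}$ of the form
\begin{equation*}
|\mathcal{I}_{2}(t)|\le C\,\|\langle\cdot\rangle^{|\g|}g(t)\|_{L^{q}}\,\bigl\|\langle\cdot\rangle^{k+\g/2}w(t)\bigr\|_{\mathbb{H}^{s}}\,\|w(t)\|_{L^{2}_{k}},
\end{equation*}
valid for $q>d/(d+\g+2s)$. I expect to establish this via a Littlewood--Paley or dyadic decomposition of $\Q(w,g)$ in the spirit of \cite{chaker,GIS}, using the Sobolev embedding $\mathbb{H}^{s}\hookrightarrow L^{2d/(d-2s)}$ and Hardy--Littlewood--Sobolev to absorb the kinetic singularity $|v-\vet|^{\g}$; the factor $\langle v\rangle^{|\g|}$ on $g$ is exactly what controls the large-velocity region, where $|v-\vet|^{\g}\sim \langle v\rangle^{\g}$. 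The restriction $-3/2<\g+2s<0$ together with the threshold $k>11+4s$ in dimension $d=3$ arise from balancing all weight shifts through this bilinear estimate and the commutators from $\mathcal{I}_{1}$.

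Combining both bounds in the energy identity, Young's inequality $ab\le \e a^{2}+b^{2}/(4\e)$ absorbs $\|\langle\cdot\rangle^{k+\g/2}w\|_{\mathbb{H}^{s}}^{2}$ into the coercive term at the cost of a factor $\|\langle\cdot\rangle^{|\g|}g\|_{L^{q}}^{r}$, where $r$ is produced by interpolation between $L^{2}_{k}$ and $\dot{\mathbb{H}}^{s}_{k+\g/2}$ and coincides precisely with the Prodi-Serrin exponent in \eqref{eq:ProSergf}; the scaling $2s/r+d/q=2s+d+\g$ is exactly what makes the absorption sharp. The resulting differential inequality
\begin{equation*}
\tfrac{\d}{\d t}\|w(t)\|_{L^{2}_{k}}^{2}\le C\bigl(1+\|\langle\cdot\rangle^{|\g|}g(t)\|_{L^{q}}^{r}\bigr)\|w(t)\|_{L^{2}_{k}}^{2}
\end{equation*}
has an $L^{1}_{t}$-prefactor by the Prodi-Serrin hypothesis, and Gr\"onwall yields \eqref{eq:Stability}. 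The main obstacle is establishing the bilinear estimate for $\mathcal{I}_{2}$ with the exact Prodi-Serrin scaling while keeping all weight shifts consistent with the coercivity step; this careful balancing -- tracking how a $k$-weighted test function interacts with both the angular singularity of $b$ and the kinetic factor $|u|^{\g}$ -- is the technical heart of the proof and dictates the precise arithmetic on $k$ and $\g+2s$ in the hypotheses.
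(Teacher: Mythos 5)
Your skeleton -- split $\partial_{t}w=\Q(h,w)+\Q(w,g)$, extract coercivity from the first term, control the second, and close by Gr\"onwall -- is the same as the paper's (up to the inconsequential sign swap $w=-f$). However the two displayed estimates that carry the whole argument are neither correct as stated nor where the paper actually places the Prodi--Serrin input, and a load-bearing step is missing entirely. Your coercivity bound for $\mathcal{I}_{1}$ silently folds the commutator of $\Q(h,\cdot)$ with the weight $\la v\ra^{k}$ into a benign lower-order $L^{2}$ remainder. That is not what happens: this commutator $\mathcal{R}_{k}(g,f,f)$, as in \eqref{eq:commutator_implicit_def}, is handled by Proposition \ref{cor:Comm}, and its control costs a small multiple of the very diffusive norm $\|\la\cdot\ra^{(k+\g)/2}w\|_{\mathbb{H}^{s}}^{2}$ (which must be reabsorbed into the coercive term, not parked on the $L^{2}$ side), together with a prefactor $\bm{\Lambda}_{\e}[h]+\bm{\Lambda}_{\e}[g]$ that contains $\|\la\cdot\ra^{k/2}h\|_{\mathbb{H}^{s}}^{2}$ and $\|h\|_{L^{2}_{2(4+|\g|)}}^{s/(s-\nu)}$. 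These quantities are in $L^{1}([0,T])$ only because the paper first applies the weighted propagation Theorem \ref{mpsweight} to both $h$ and $g$, which yields $h,g\in L^{\infty}_{t}L^{2}_{k+2|\g|}$ and $\la\cdot\ra^{(k-\g)/2}h,\,\la\cdot\ra^{(k-\g)/2}g\in L^{2}_{t}\mathbb{H}^{s}$ under the Prodi--Serrin hypothesis and the moment assumptions on $h_{\rm in},g_{\rm in}$. You never invoke this propagation step; without it your Gr\"onwall factor is not integrable in time and the argument does not close. This is also the real reason the hypothesis $h_{\rm in},g_{\rm in}\in L^{2}_{k+2|\g|}\cap L^{1}_{\ell}$ is there -- not merely to absorb weight shifts.

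Second, the proposed bilinear bound $|\mathcal{I}_{2}|\le C\,\|\la\cdot\ra^{|\g|}g\|_{L^{q}}\,\|\la\cdot\ra^{k+\g/2}w\|_{\mathbb{H}^{s}}\,\|w\|_{L^{2}_{k}}$ misassigns the role of the Prodi--Serrin norm. In the paper it is $\mathscr{N}_{1}$ (your $\mathcal{I}_{1}$) that produces it: after the Cancellation Lemma one is left with $\int F^{2}\,\bm{c}_{\g}[g]\,\d v$, and the fractional $\e$-Poincar\'e inequality of Proposition \ref{prop:ePo} is precisely what turns this into $\e\|\la\cdot\ra^{(\g+k)/2}w\|_{\mathbb{H}^{s}}^{2}+C_{\e}(\|g\|_{L^{1}}+\|\la\cdot\ra^{|\g|}g\|_{L^{q}}^{r})\|w\|_{L^{2}_{k+\g}}^{2}$, carrying the exponent $r=\frac{s}{s-\nu}$ and the scaling relation $\tfrac{2s}{r}+\tfrac{d}{q}=2s+d+\g$. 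The other term $\mathscr{N}_{2}$ (your $\mathcal{I}_{2}$) is instead split into a commutator $\mathcal{R}_{k-\g}(f,h,\la\cdot\ra^{\g/2}f)$ plus a trilinear estimate from \cite{amu11}, which produces a factor $\|\la\cdot\ra^{(k-\g)/2}h\|_{\mathbb{H}^{s}}^{2}$ -- again controlled only via the a priori propagation. A vague appeal to a Littlewood--Paley decomposition and Hardy--Littlewood--Sobolev does not produce your claimed estimate, and even if some estimate of that shape held, it would place the Prodi--Serrin factor in the wrong term. So while the outer architecture coincides, the two estimates you call the ``technical heart'' are both unproved and, as stated, not the ones the argument actually runs on.
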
  
We stress that the main Theorems \ref{mps_g} and \ref{theo:unique} must be interpreted as practical criteria to get \emph{a priori} estimates for solutions to the  Boltzmann equation in an explicit and quantitative way.  The computations provided are formal, certainty the ones requiring smoothness.  For this reason, we are stating the results in the framework of classical solutions.  The rigorous justification of such estimates is a delicate issue, however, it is likely that at least in some regimes (such as smallness of the initial data) or under reasonable restrictions (such as a Prodi-Serrin like criteria) can be carry on, with no condition, for weak solutions.  Evidence of this can be found for example in \cite{GIS,GGL24}.

We point out that the Prodi-Serrin like criterion of Theorem \ref{theo:unique} and Theorem \ref{mps_g} are the same, even if the former deals with a \emph{weighted} $L^{2}$-norm.  Effort has been made to provide a unique criterion at the price of adding an assumption on the initial data $h_{\rm in},g_{\rm in}$.  We also point out that the stability estimate \eqref{eq:Stability} is likely untrue for \emph{unweighted} norms $L^{2}$. For this reason, the cornerstone of Theorem \ref{theo:unique}'s proof is the propagation of \emph{weighted} $L^{p}$-norm under the criterion \eqref{eq:ProSer1}.

\subsection{Link with the Landau equation and strategy of proof} \label{sec:Landau}
The Boltzmann equation \eqref{eq:Boltz} and the Landau equation \eqref{eq:Landau} have been studied in a parallel fashion, with developments in one helping the theory for the other.  The Landau equation can be seen as a limiting equation derived from the Boltzmann equation in the regime of grazing collision.  In such a limiting procedure inherently the angular singularity $s \to 1$.  Rigorous work of this limiting procedure can be found, for example, in \cite{Al-Vi}.  A recent and interesting new point of view can be found in \cite{YZ23}.

The Landau equation is given by
\begin{equation}\label{eq:Landau}
\partial_{t}f(t,v)=\Q_{L}(f)(v) := {\grad}_v \cdot \int_{\R^{d}} |v-\vet|^{\g} \, \Pi(v-\vet) 
\Big\{ \fet \grad f - f {\grad f}_\ast \Big\}
\, \d\vet\, ,
\end{equation}
with commonly used shorthands $f:= f(v)$, $f_* := f(v_*)$ and where $\Pi(z)=\left(|z|^{2}\mathrm{Id}- z \otimes z\right)$. We refer the reader to the introduction of \cite{ABDL-PS} for an extensive description of the existing results, both conditional and unconditional, relevant for the study of \eqref{eq:Landau}. We mention here that a recent contribution \cite{GS} has shown that strong solution to the \eqref{eq:Landau} do not blow-up.  The special structure of the equation leads the Fisher information functional to decrease along the flow of solutions to \eqref{eq:Landau}, refer to \cite{GGL24} for consequences of this result for weak solutions. It is an open question to see if an analogous result holds for the spatially homogeneous Boltzmann equation considered in the present contribution.

The present contribution's approach is reminiscent of \cite{ABDL-PS} for the Landau equation.  In fact, we point out that the statements in Theorem \ref{mps_g} and \ref{theo:unique} become for $s=1$ exactly the corresponding main results in \cite{ABDL-PS}. An interesting problem is to check rigorously whether the results of this paper recover those of \cite{ABDL-PS} in the grazing collision limit (in particular, when $s \to 1$). 

Let us discuss now similarities and difficulties when implementing the strategy to the Boltzmann case.  We notice that the Landau equation can be written as a nonlinear parabolic equation
\begin{equation}\label{LFD}
\partial_{t}f=\mathrm{Tr}\left(\mathcal{A}[f] D^{2}f\right) +c_{d,\g}\,\bm{c}_{\g}[f]f ,\end{equation}
where $D^{2}f$ is the Hessian matrix of $f$, $\mathcal{A}[f]$ is a suitable positive definite diffusion matrix, $c_{d,\g} >0$ a positive constant, and 
$$\bm{c}_{\g}[f]=\int_{\R^{d}}|v-\vet|^{\g}f(\vet)\d \vet, \qquad \forall v \in \R^{d}.$$
Due to coercive properties of the diffusion matrix $\mathcal{A}[f]$, the Landau equation exhibits a regularisation effect corresponding to a gain of one full derivative with the immediate appearance of the $\dot{\mathbb{H}}^{1}$ norm, up to some algebraic weight. Similarly, the Boltzmann operator $\Q$ provides a regularisation of \textit{fractional order} with immediate appearance of $\dot{\mathbb{H}}^{s}$ norm, with an algebraic weight $\langle \cdot \rangle^{\frac{\gamma}{2}}$ and with $s \in (0,1)$ directly related to the angular scattering singularity $s$ in \eqref{eq:Noyau}.  Such gain of regularity has been first rigorously proven, with appearance of local $\dot{\mathbb{H}}^{s}$-norm, in \cite{ADVW} and latter extended in \cite{amu10} to give a suitable coercivity estimate for $\Q$, see Lemma \ref{lem:coerc}.  Such fact can also be seen clearly thanks to the decomposition, derived in \cite{Silv}, of the collision operator $\Q$ as 
$$\Q(f)=\Q_{1}(f) + \tilde{c}_{d,\g}\ \bm{c}_{\g}[f]\,f, \qquad \Q_{1}(f)=\int_{\R^{d}}\left(f(\vet)-f(v)\right)K_{f}(v,\vet)\d \vet\,,$$
for some suitable kernel $K_{f}(v,\vet)$, depending on $f$ and analogous to $\mathcal{A}[f]$ in \eqref{LFD}, which has a regularisation effect similar to the fractional Laplacian of order $s\in(0,1)$
$$K_{f}(v,\vet) \sim C_{f}(v)\,|v-\vet|^{-d-2s}\,, \qquad v,\vet \in\R^{d}\times\R^{d}.$$
For the well-posedness of \eqref{LFD}, the competition between the helping action of the diffusion associated to $\mathcal{A}[f]$ and the singular drift term $\bm{c}_{\g}[f]$ is a crucial point to be addressed, hence, the conditional criterion given in \cite{ABDL-PS} (which is essentially \eqref{eq:ProSer1} with $s=1$). Similarly, for the Boltzmann equation the key point in to control the singular drift term $\bm{c}_{\g}[f]$ with the helping action of the fractional diffusion term $\Q_{1}(f)$. The quantification of such action is obtained by generalising the so-called $\e$-Poincar\'e inequality, derived in \cite{GG,ABDL-PS}, to address fractional regularisation, see Proposition \ref{prop:ePo}.
 
Roughly speaking, the main steps to prove Theorems \ref{mps_g} and \ref{theo:unique} are the following:
\begin{enumerate}
\item [i)] A control of the singular term $\bm{c}_{\g}[f]$ in terms of suitable higher norm $\|\la \cdot \ra^{|\g|}f\|_{L^{q}}^{r}$ with $q,r$ appearing in \eqref{eq:ProSer1}.
\end{enumerate}
This is done thanks to a new fractional $\e$-Poincar\'e inequality which accounts for the fractional diffusion term $\|\la \cdot\ra^{\frac{\g}{2}}f\|_{\dot{\mathbb{H}}^{s}}$ in Proposition \ref{prop:ePo}. With such a $\e$-Poincar\'e inequality, the proof of Theorem \ref{mps_g} is deduced in a direct fashion from the evolution of the $L^{p}$-norm for solution to \eqref{eq:Boltz}.
\begin{enumerate}
\item [ii)]A generalisation of Theorem \ref{mps_g} addressing weighted $L^{p}$-norm of solutions to \eqref{eq:Boltz} and, therefore, a weighted version of the $\e$-Poincar\'e inequality.
\end{enumerate}
Controlling weighted norms of $\|f(t)\|_{L^{p}_{k}}$ to derive the stability estimate in Theorem \ref{theo:unique} is a non trivial matter, especially if one wishes to keep an ``unweighted'' Prodi-Serrin criterion \eqref{eq:ProSer1}, that is, to provide a unified criterion for the control of any weighted norm $\|\cdot\|_{L^{p}_{k}}$, $k \geq0$.  We address this issue by  imposing an additional assumption only on the initial datum to ensure the propagation or appearance of such norm, see Theorem \ref{mpsweight}.   A new weighted version of the $\e$-Poincar\'e inequality is also presented in the proof of such result, see Proposition \ref{prop:ePo-weight}.  Relative to the Landau equation, the proof is more challenging because of the distribution of weights $\la \cdot\ra^{\frac{k}{2}}$ due to the fully \emph{nonlocal} nature of the operator $\Q_{1}(f)$. Details on this are given in Section \ref{sec:weights}.
\begin{enumerate}
\item [iii)]A careful study of the evolution of the $L^{2}_{k}$ norm of solutions to \eqref{eq:Boltz} and combine it with a Gronwall argument.
\end{enumerate}
This step is delicate and resorts to several fine properties of the collision operator, some of them  scattered in the literature. We use in particular several sharp estimates from \cite{He} and \cite{amu11}. In particular, the nonlocal nature of the collision operator $\Q$ presents challenges that require commutator estimates of the type
$$\la \Q(f, \la \cdot \ra^{ \frac{k}{2} } g), \la \cdot \ra^{ \frac{k}{2} } \psi  \ra_{L^2} - \la \Q(f, g) , \la \cdot \ra^{ k}\psi \ra_{L^{2}} \,$$
where $\la \cdot,\cdot\ra_{L^{2}}$ denote standard the inner product in $L^{2}$, see \cite{AMSY,CG} and \cite{He} for fine versions of such commutator estimates. New commutators estimates valid for very soft potentials and of independent interest are provided along the contribution, see for example Proposition \ref{cor:Comm}.

\subsection{Organization of the paper} Section \ref{sec:prodi} provides a proof for Theorem \ref{mps_g} as well as the fractional $\e$-Poincar\'e inequality. We deal with the extension to these results to $L^{p}$-spaces with \emph{weights} in Section \ref{sec:weights} and provide the main stability result yielding to Theorem \ref{theo:unique} in Section \ref{sec:unique}. We collect in Appendix \ref{app:tech} several estimates related to moments estimates and useful properties of the collision operator $\Q$. We in particular extend a fundamental result of \cite[Prop. 4.1]{AMSY} to very soft-potentials in Appendix \ref{app:AMSY}.

\subsection*{Acknowledgements} The second author acknowledges support from the LabEx CEMPI (ANR-11-LABX-0007) and CPER WaveTech@HdF. B.L. was partially supported by PRIN2022 (project ID: BEMMLZ) Stochastic control and games and the role of information. He also gratefully acknowledges the financial support from the Italian Ministry of Education, University and Research (MIUR), Dipartimenti di Eccellenza grant 2022-2027
as well as the support from the de Castro Statistics Initiative, Collegio Carlo Alberto (Torino). The authors wish to express their  gratitude to the anonymous referees whose comments helped to improve several aspects of the presentation.

 \emph{Data sharing not applicable to this article as no datasets were generated or analysed during the current study.
}
\subsection*{Note added to the paper} \label{sec:GS}

Since the submission of this work for publication, an important contribution \cite{ISV} has appeared as a preprint, demonstrating that the Fisher information is monotone decreasing in time along solutions to \eqref{eq:Boltz}. This finding, which aligns with the results in \cite{GS} concerning the Landau equation, in particular establishes the global existence of smooth solutions to \eqref{eq:Boltz}. Furthermore, it implies that the Prodi-Serrin criteria discussed herein are automatically satisfied if the initial datum fulfills certain regularity conditions. Nonetheless, we believe that the tools and techniques developed in this paper remain of significant interest for further studies of the Boltzmann equation, especially for (very) soft potentials.

\section{Prodi-Serrin like criteria for $L^{p}$-appearance and propagation} \label{sec:prodi}

\subsection{General properties of the Boltzmann operator}

We collect here several fundamental properties of the Boltzmann collision operator that will be used throughout the sequel. We begin with the following \emph{Cancellation Lemma} obtained in \cite[Section 3]{ADVW}.
\begin{lem}[\textit{\textbf{Cancellation Lemma}}]\label{lem:canc} Given a collision kernel $B(u,\sigma)=B(|u|,\cos\theta)$, for almost every $\vet \in \R^{d}$ and any smooth $F$,  one has
\begin{equation}\label{eq:Can}
\int_{\R^{d}}\d v\int_{\S^{d-1}}B(u,\sigma)\,\left[F(v')-F(v)\right]\,\d\sigma=\left(F \ast S_{B}\right)(\vet)
\end{equation}
where
$$S_{B}(z)=|\S^{d-2}|\int_{0}^{\frac{\pi}{2}}\sin^{d-2}(\theta)\left[\frac{1}{\cos^{d}\left(\tfrac{\theta}{2}\right)}B\left(\frac{|z|}{\cos\left(\tfrac{\theta}{2}\right)}\,,\,\cos\theta\right)-B(|z|\,,\,\cos\theta)\right]\d\theta.$$
\end{lem}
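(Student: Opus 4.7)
The strategy is to split the integral into the ``loss'' and ``gain'' parts
$$I := \int_{\R^d}\d v\int_{\S^{d-1}}B(u,\sigma)[F(v') - F(v)]\,\d\sigma = I_1 - I_2,$$
and reduce each to a convolution evaluated at $\vet$. The loss term $I_2 = \int B\, F(v)\,\d v\,\d\sigma$ is immediate: the $\sigma$-integration of $B(|u|,\cos\theta)$ depends only on $|u| = |v-\vet|$, so setting $z = v-\vet$ yields
$$I_2 = \int_{\R^d} F(\vet + z)\, |\S^{d-2}|\int_0^\pi B(|z|,\cos\theta)\sin^{d-2}\theta\,\d\theta\,\d z,$$
which is a convolution with a radial kernel reproducing the second term of $S_B$, provided one uses the standing convention that $b(\cos\theta)$ is supported on $\theta \in [0,\pi/2]$ (reducing $\int_0^\pi$ to $\int_0^{\pi/2}$); otherwise the gain term below would not be integrable near $\theta = \pi$.

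The substantive work is the gain term $I_1 = \int B\, F(v')\,\d v\,\d\sigma$. For fixed $\vet$ and $\sigma$, I change variables $v \to v'$. Writing $V := v - \vet$ and $V' := v' - \vet = (V + |V|\sigma)/2$, a direct computation, using only the identity $1+\cos\theta = 2\cos^2(\theta/2)$, gives the Jacobian
$$\det\frac{\partial V'}{\partial V} = \frac{1+\cos\theta}{2^d} = \frac{\cos^2(\theta/2)}{2^{d-1}},$$
while the geometric relations $|V'| = |V|\cos(\theta/2)$ and $\hat{V}'\cdot\sigma = \cos(\theta/2)$ show that this map is a bijection (up to null sets) from $\R^d$ onto the open hemisphere $\{V'\cdot\sigma > 0\}$, with inverse $V = 2V' - (|V'|^2/(V'\cdot\sigma))\sigma$. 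Substituting these relations and applying Fubini to swap the $\sigma$- and $V'$-integrals, for each fixed $V'$ the $\sigma$-integration runs over the hemisphere $\{\sigma\cdot V' > 0\}$.

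Parametrizing this hemisphere via $\sigma = \cos\alpha\,\hat{V}' + \sin\alpha\,\omega$ with $\alpha \in [0,\pi/2]$ and $\omega \in \S^{d-2}$, the key identity $\alpha = \theta/2$ lets me rewrite the integral in terms of $\theta$. The combination of $\sin^{d-2}\alpha$ from the spherical measure, the Jacobian factor $2^{d-1}/\cos^2(\theta/2)$, and the half-angle identity $\sin^{d-2}\theta = 2^{d-2}\sin^{d-2}(\theta/2)\cos^{d-2}(\theta/2)$ collapses to
$$I_1 = \int_{\R^d} F(\vet + z)\,|\S^{d-2}|\int_0^{\pi/2}\frac{\sin^{d-2}\theta}{\cos^d(\theta/2)} B\!\left(\frac{|z|}{\cos(\theta/2)},\cos\theta\right)\,\d\theta\,\d z,$$
where again the restriction to $\theta \in [0,\pi/2]$ uses the support convention on $b$. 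Subtracting $I_2$ and noting that the resulting kernel $S_B$ is radial, so that $\int F(\vet+z)S_B(z)\,\d z = (F*S_B)(\vet)$, concludes the proof.

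The main obstacle is the geometric bookkeeping: computing the Jacobian of the nonlinear map $V\mapsto V'$, identifying its range as a hemisphere, and transferring the spherical integration from the post-collision angle $\alpha$ back to the scattering angle $\theta = 2\alpha$. Once these are in hand, everything reduces to algebraic manipulations built on $\cos^2(\theta/2) = (1+\cos\theta)/2$.
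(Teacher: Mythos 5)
Your argument is correct and is the standard change-of-variables proof from \cite[Section 3]{ADVW}, which the paper cites without reproving. In particular the Jacobian $\det(\partial V'/\partial V)=\cos^{2}(\theta/2)/2^{d-1}$, the geometric identities $|V'|=|V|\cos(\theta/2)$ and $\hat V'\cdot\sigma=\cos(\theta/2)$ identifying the image as the half-space $\{V'\cdot\sigma>0\}$, the hemisphere parametrization with $\alpha=\theta/2$, the half-angle collapse of $\sin^{d-2}\theta$, and your observation that the $\int_{0}^{\pi/2}$ limits in $S_{B}$ silently encode the standard support convention $\theta\in[0,\pi/2]$ (without which both the gain and the loss side would carry $\int_{0}^{\pi}$) are all accurate and in the right order.
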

\begin{rmq}\label{nb:canc} For $B(|u|,\cos\theta)=|u|^{\g}b(\cos\theta)$, one sees that
$$S_{B}(z)=|z|^{\g}|\S^{d-2}|\int_{0}^{\frac{\pi}{2}}\sin^{d-2}(\theta)\tilde{b}(\cos\theta)\d\theta$$
where
$$\tilde{b}(\cos\theta)=\left[\left(\cos\left(\tfrac{\theta}{2}\right)\right)^{-\g-d}-1\right]b(\cos\theta), \qquad \theta \in \left[0,\frac{\pi}{2} \right].$$
Notice that, under the assumption \eqref{eq:b0s} one has
$$\sin^{d-2}(\theta)\tilde{b}(\cos\theta) \sim \frac{b_{0}(\g+d)}{8}\theta^{1-2s}, \qquad \theta \sim 0\,,$$ 
so that $\tilde{b}$ is integrable  for any $s \in (0,1)$ and 
$$S_{B}(z)=\int_{\S^{d-1}}\tilde{B}(u,\sigma)\d\sigma=|z|^{\g}\int_{\S^{d-1}}\tilde{b}(\widehat{z}\cdot \sigma)\d\sigma=\|\tilde{b}\|_{L^1}|z|^{\g}$$
is well-defined.
\end{rmq}
We also recall the following coercivity estimate related to the collision operator, taken from the proof of \cite[Eq. (2.9)]{amuxy} (we also refer to \cite[Theorem 1]{strain} for an equivalent formulation):
\begin{lem}[\textit{\textbf{Coercivity estimate}}]\label{lem:coerc} Let $$B(u,\sigma)=|u|^{\g}b(\cos\theta)\,,$$ where $b$ satisfies \eqref{eq:b0s} and $\g+2s \leq 0.$ For any  suitable $G \in \mathcal{Y}_{\rm in}$ and $F  \in \mathcal{S}(\R^{d})$, define
\begin{equation}\label{eq:MDG}
\mathscr{D}[G,F]=\int_{\R^{d}}\d v\int_{\R^{d}}\d\vet\int_{\S^{d-1}}B(u,\sigma)G(\vet)\left[F(v')-F(v)\right]^{2}\,\d\sigma.\end{equation}
Then, there exist $c_{0},C_{0} >0$ (depending only on $d,\g,\varrho_{0},E_{0},H_{0})$ such that
$$\mathscr{D}[G,F] \geq c_{0}\left\|\langle \cdot \rangle^{\frac{\g}{2}}F\right\|_{\dot{\mathbb{H}}^{s}}^{2}-C_{0}\,\left\|\langle \cdot\rangle^{\frac{\g}{2}}F\right\|_{L^2}^{2}$$
holds true for any $G \in \mathcal{Y}_{\rm in}$ and $F \in \mathcal{S}(\R^{d})$.
\end{lem}
\begin{rmq}\label{rmq:Dgamma} For $B(u,\sigma)=|u|^{\g}b(\cos\theta)$ satisfying \eqref{eq:b0s}, we simply write $\mathscr{D}[G,F]$ for the aforementioned functional. To enlighten the role of the parameter $\g$ in the above definition, we will also use the notation $\Q_{\g}(g,f)$ for the collision operator associated to $B(u,\sigma)=|u|^{\g}b(\cos\theta)$ and denote with $\mathscr{D}_{\g}[G,F]$ the associated coercivity functional, see, e.g. Proposition \ref{prop:AMSY}.
\end{rmq}
Introduce the notation
\begin{equation}\label{eq:cgamma}
\bm{c}_{\g}[f](v)=\int_{\R^{d}}|v-\vet|^{\g}f(\vet)\d \vet, \qquad \g <0\,.\end{equation}
The analysis of this term requires then the specific use of the Hardy-Littlewood-Sobolev inequality which we recall here for the sake of clarity.
\begin{prop}[\textit{\textbf{Hardy-Littlewood-Sobolev inequality}}] \label{prop:HLS}
Let $d\in \N$, $d\ge 1$, $1 <  m,p < \infty$ and $0 < \lambda < d$ with
$$\frac{1}{p}+\frac{\lambda}{d}+\frac{1}{m}=2.$$
Then, there exists $C_{\rm{HLS}}>0$ (depending on $d,p,\lambda$) such that for c $g,h\::\:\R^{d}\to \R$
\begin{equation}\label{eq:HLS}
\int_{\R^{2d}}g(x)|x-y|^{-\lambda}h(y)\d x \d y \leq C_{\rm{HLS}}\, \|g\|_{L^{p}}\, \|h\|_{L^{m}} .
\end{equation}
\end{prop}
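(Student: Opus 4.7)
The plan is to reduce \eqref{eq:HLS} by duality to a mapping property of the Riesz-type operator
$$I_\lambda h(x):=\int_{\R^d}|x-y|^{-\lambda}h(y)\,\d y.$$
The left-hand side of \eqref{eq:HLS} equals $\langle g, I_\lambda h\rangle$, so by $L^{p}$--$L^{p'}$ duality the inequality is equivalent to $\|I_\lambda h\|_{L^{p'}}\leq C_{\mathrm{HLS}}\|h\|_{L^m}$. The scaling condition $\tfrac1p+\tfrac{\lambda}{d}+\tfrac1m=2$ rewrites as $\tfrac{\lambda}{d}=\tfrac{1}{p'}+\tfrac{1}{m'}$, which is the classical Hardy--Littlewood--Sobolev scaling for a Riesz potential of order $d-\lambda\in(0,d)$. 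As this is a classical result, I would follow Hedberg's elegant approach based on the centred Hardy--Littlewood maximal function $\mathcal{M}$.

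The heart of the argument is a pointwise estimate obtained by splitting, for any $R>0$,
$$|I_\lambda h(x)|\leq \int_{|x-y|<R}|x-y|^{-\lambda}|h(y)|\,\d y+\int_{|x-y|\geq R}|x-y|^{-\lambda}|h(y)|\,\d y.$$
A dyadic decomposition of the ball $B(x,R)$ into annuli $\{2^{-k-1}R\leq|x-y|<2^{-k}R\}$, combined with the standard maximal-function bound on each annulus and summation of the resulting geometric series (which converges because $\lambda<d$), controls the first integral by $C\,R^{d-\lambda}\mathcal{M}h(x)$. For the second, H\"older's inequality together with the computation $\int_{|z|\geq R}|z|^{-\lambda m'}\,\d z = C\,R^{d-\lambda m'}$ (finite since $\lambda m'>d$, a consequence of $\lambda/d > 1/m'$ when $p<\infty$) delivers the bound $C\,R^{d/m'-\lambda}\|h\|_{L^m}$.

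Choosing $R$ so as to balance the two contributions leads to
$$|I_\lambda h(x)|\leq C\,\|h\|_{L^m}^{1-\theta}\,\bigl(\mathcal{M}h(x)\bigr)^{\theta},\qquad \theta=\frac{m}{p'}\in(0,1),$$
where the value of $\theta$ is forced by the identity $\lambda/d=1/p'+1/m'$. Raising this to the power $p'$, integrating over $\R^d$ and invoking the $L^m$-boundedness of the Hardy--Littlewood maximal function (valid precisely because $1<m<\infty$) produces $\|I_\lambda h\|_{L^{p'}}\leq C\|h\|_{L^m}$, hence \eqref{eq:HLS} by duality. The main subtlety is bookkeeping: one must check that Hedberg's optimisation yields exactly $\theta=m/p'$ and that the condition $m<p'$ ensuring $\theta\in(0,1)$ follows automatically from $\lambda<d$. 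The endpoint cases $p,m\in\{1,\infty\}$ are fortunately excluded from the statement, sparing us from having to resort to Marcinkiewicz interpolation of a weak-type $(m,p')$ bound or, alternatively, to Lieb's symmetric decreasing rearrangement argument.
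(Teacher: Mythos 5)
The paper does not prove Proposition~\ref{prop:HLS}: it is recalled as a classical inequality ``for the sake of clarity'' and left unproved, the implicit reference being standard texts such as Lieb--Loss \cite{LL} (proof by symmetric-decreasing rearrangement) or Stein's book (proof by interpolation of a weak-type estimate). Your Hedberg-style argument is a correct, self-contained third route. The duality reduction to $\|I_\lambda h\|_{L^{p'}}\lesssim\|h\|_{L^m}$ with $I_\lambda h(x)=\int|x-y|^{-\lambda}h(y)\,\d y$ is standard, and the pointwise bound works out: the dyadic decomposition of $B(x,R)$ controls the near part by $C\,R^{d-\lambda}\mathcal{M}h(x)$ (the geometric series converges because $\lambda<d$); H\"older's inequality controls the far part by $C\,R^{d/m'-\lambda}\|h\|_{L^m}$, and the tail integral $\int_{|z|\ge R}|z|^{-\lambda m'}\d z$ is finite precisely because $\lambda m'>d$, which you correctly deduce from $\lambda/d=1/p'+1/m'>1/m'$ (here $p<\infty$ is used). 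Optimising $R$ does give $\theta=m/p'$, and $\theta\in(0,1)$ is forced since $1/m-1/p'=1-\lambda/d>0$. Raising to the power $p'$, integrating, and applying the strong-type $(m,m)$ bound for $\mathcal{M}$ (valid precisely because $1<m<\infty$, so the excluded endpoints are genuinely needed) closes the exponent count $p'(1-\theta)+m=p'$. Compared with the rearrangement proof, Hedberg's method sacrifices the sharp constant and extremisers but is more elementary and more robust (it adapts to weighted settings and more general kernels); since the paper only needs the existence of a finite $C_{\mathrm{HLS}}$, either approach is adequate.
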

 
We finally recall the \emph{weak form} of the collision operator $\Q(g,f)$ (see \cite{villH}), for any smooth test function $\phi=\phi(v)$
\begin{equation}\label{eq:weak}
\int_{\R^{d}}\Q(g,f)\phi \d v =\int_{\R^{d}\times \R^{d}\times \S^{d-1}}B(v-\vet,\sigma)g(\vet)f(v)\left[\phi(v')-\phi(v)\right]\d v\d\vet\d\sigma\,.\end{equation}

\subsection{Evolution of $L^{p}$-norm}

We investigate in details the evolution of $L^{p}$-norms for solutions to the Boltzmann equation \eqref{eq:Boltz}. 
We use in the sequel the notation, for $k\in \R$ and $p \in (1,\infty)$,
\begin{equation}\label{eq:MkpD}
\lM_{k,p}(t) :=\int_{\R^{d}}f(t,v)^{p}\langle v\rangle^{k}\d v, \qquad \lD_{s,k,p}(t) :=\left\|\langle \cdot \rangle^{\frac{k}{2}}f(t)^{\frac{p}{2}}\right\|_{\dot{\mathbb{H}}^{s}}^{2}\,,
\end{equation}
where $s \in (0,1)$. We adopt also the following short-hand notation
$$\lM_{p}(t)=\lM_{0,p}(t) \qquad \text{ and } \quad \lD_{s,p}(t)=\lD_{s,\g,p}(t).$$
One has then the following proposition.
\begin{prop}\label{prop:dLmp} Let $f_{\rm in} \in \mathcal{Y}_{\rm in}$ be given and let $f(t,\cdot)$ be a classical solution to \eqref{eq:Boltz} and let $p >1$. Then, there exists $C_{p},c_{p} >0$ depending only on $p,\|f_{\rm in}\|_{L^{1}_{2}}$ and $H(f_{\rm in})$ as well as $\bm{C}_{B} >0$ depending on the collision kernel $B$ such that
\begin{equation}\label{eq:dMp}
\frac{\d}{\d t}\lM_{p}(t) + c_{p}\lD_{s,p}(t) \leq (p-1)\bm{C}_{B}\int_{\R^{d}}\bm{c}_{\g}[f(t)](v)f^{p}(t,v)\d v + C_{p}\lM_{\g,p}(t)\,,
\end{equation}
for any $t \geq0.$
\end{prop}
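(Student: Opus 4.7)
The plan is to differentiate $\lM_p(t)$ along the Boltzmann flow, yielding
\begin{equation*}
\frac{\d}{\d t}\lM_p(t) = p\int_{\R^d}\Q(f,f)(v)\,f^{p-1}(v)\,\d v,
\end{equation*}
and then apply the pre/post-collisional change of variables $(v,v_*)\leftrightarrow(v',v_*')$ to the gain part of the weak form, so as to recast the right-hand side as
\begin{equation*}
p\int_{\R^{2d}}\int_{\S^{d-1}} B(v-v_*,\sigma)\,f(v)\,f(v_*)\bigl[(f')^{p-1}-f^{p-1}\bigr]\,\d\sigma\,\d v_*\,\d v.
\end{equation*}

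The cornerstone is a refined Young inequality: for $p>1$ and $a,b\ge 0$,
\begin{equation*}
p\,a\bigl(b^{p-1}-a^{p-1}\bigr) \leq (p-1)\bigl(b^p-a^p\bigr) - c_p\bigl(b^{p/2}-a^{p/2}\bigr)^2,
\end{equation*}
for some $c_p>0$ depending only on $p$. Equivalently, the nonnegative gap $\phi(a,b):=a^p+(p-1)b^p-p\,ab^{p-1}$ in Young's inequality dominates $\psi(a,b):=(b^{p/2}-a^{p/2})^2$. Since both $\phi$ and $\psi$ are $p$-homogeneous in $(a,b)$, vanish only at $a=b$ with a quadratic zero, and have positive ratio in the limits $b/a\to 0, 1, \infty$, a compactness argument on $b/a\in[0,\infty]$ yields a uniform lower bound $\phi\ge c_p\psi$.

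Plugging this pointwise inequality into the triple integral with $a=f$ and $b=f'$ splits the right-hand side into two pieces. The first, $(p-1)\iiint B\,f_*\bigl[(f')^p-f^p\bigr]\,\d\sigma\,\d v_*\,\d v$, is precisely the setting of the Cancellation Lemma (Lemma \ref{lem:canc}) applied to $F=f^p$: by Remark \ref{nb:canc} it equals $(p-1)\bm{C}_B\int f^p\,\bm{c}_\g[f]\,\d v$ with $\bm{C}_B=\|\tilde b\|_{L^1}$. The second piece is $-c_p\iiint B\,f_*\bigl[(f')^{p/2}-f^{p/2}\bigr]^2\,\d\sigma\,\d v_*\,\d v=-c_p\,\mathscr{D}[f,f^{p/2}]$. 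Applying the Coercivity Estimate (Lemma \ref{lem:coerc}) with $G=f\in\mathcal{Y}_0$ and $F=f^{p/2}$, together with $\|\langle\cdot\rangle^{\g/2}f^{p/2}\|_{L^2}^2=\lM_{\g,p}(t)$, gives $-c_p\mathscr{D}[f,f^{p/2}]\le -c_pc_0\,\lD_{s,p}(t)+c_pC_0\,\lM_{\g,p}(t)$. Relabeling the constants delivers \eqref{eq:dMp}.

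The principal technical point is the refined Young (Tartar-type) inequality in the second paragraph; once it is in hand the rest is a direct appeal to Lemmas \ref{lem:canc} and \ref{lem:coerc}. The pre/post-collisional symmetry and the differentiation under the integral sign are standard \emph{a priori} manipulations, legitimate in the classical-solution framework stressed in the introduction.
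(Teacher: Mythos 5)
Your proof is correct and follows essentially the same route as the paper's: the same refined Young inequality (the paper's \eqref{eq:XY}, which you restate after multiplying through by $p$), followed by the Cancellation Lemma on the $(f')^p - f^p$ piece and the Coercivity Lemma on the squared-difference piece. The only cosmetic difference is that you sketch a compactness/homogeneity argument for the pointwise inequality, whereas the paper cites \cite[Lemma 1]{ricardo} (reproduced in Appendix \ref{app:tech}), which gives the sharp constant $c_p = p/\max(p,p')$; your sketch is valid since the ratio $\phi/\psi$ in your notation extends continuously and positively to $b/a \in \{0,1,\infty\}$.
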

\begin{proof} We multiply \eqref{eq:Boltz} by $f^{p-1}(t,v)$ and integrate over $\R^{d}$ to deduce that
\begin{multline*}
\frac{1}{p}\dfrac{\d}{\d t}\lM_{p}(t)=\int_{\R^{d}}\partial_{t}f(t,v)f^{p-1}(t,v)\d v=\int_{\R^{d}}f^{p-1}(t,v)\Q(f,f)\d v\\
=\int_{\R^{d}\times \R^{d}\times \S^{d-1}} B(u\,,\sigma)f(t,\vet)f(t,v)\left[f(v')^{p-1}-f(v)^{p-1}\right]\d v\d\vet \d\sigma.\end{multline*}
Therefore, applying \eqref{eq:XY} with $X=f(v')$ and $Y=f(v)$ one sees that
$$\dfrac{\d}{\d t}\lM_{p}(t)\leq \mathcal{I}_{1}[f(t)]-\mathcal{I}_{2}[f(t)]\,,$$
with
$$\mathcal{I}_{1}[f(t)]=\frac{p}{p'}\int_{\R^{2d}\times \S^{d-1}}f(t,\vet)\left[f(t,v')^{p}-f(t,v)^{p}\right]B(u,\sigma)\d\vet\d v\d\sigma\,,$$
and 
$$\mathcal{I}_{2}[f(t)]=\frac{p}{\max(p,p')}\int_{\R^{2d}\times\S^{d-1}}f(t,\vet)\left[f(t,v')^{\frac{p}{2}}-f(t,v)^{\frac{p}{2}}\right]^{2}B(u,\sigma)\d\vet\d v\d\sigma.$$
According to the Cancellation Lemma \ref{lem:canc}, and with the notations of  Remark \ref{nb:canc},
$$\mathcal{I}_{1}[f(t)]=\frac{p}{p'}\int_{\R^{2d}\times \S^{d-1}}f(t,\vet)f^{p}(t,v)\tilde{B}(u,\sigma)\d\vet\d v\d\sigma, \qquad \tilde{B}(u,\sigma)=|u|^{\g}\tilde{b}(\cos \theta),$$
and, using again Remark \ref{nb:canc},
$$\mathcal{I}_{1}[f(t)] \leq \frac{p}{p'}\|\tilde{b}\|_{L^{1}(\S^{d-1})}\int_{\R^{2d}}f(t,\vet)f^{p}(t,v)|v-\vet|^{\g}\d v\d\vet.$$
Moreover, applying the coercivity estimate in Lemma \ref{lem:coerc} with $G=f(t,\cdot) \in \mathcal{Y}_{\rm in}$ and $F=f^{\frac{p}{2}}(t,\cdot)$ shows that there exists $c_{p},C_{p} >0$ depending only on $\|f_{\rm in}\|_{L^{1}_{2}}$, $H(f_{\rm in})$ and $p$ such that
$$\mathcal{I}_{2}[f(t)] \geq c_{p}\lD_{s,p}(t)  - C_{p}\lM_{\g,p}(t).$$
Putting these estimates together gives the result. 
\end{proof}
One sees from the previous estimates that, to study the evolution and appearance of $L^{p}$-norms, we need to understand the contribution of the term
$$\int_{\R^{d}}\bm{c}_{\g}[f(t)](v)f^{p}(t,v)\d v.$$
This is done by the following fractional $\e$-Poincar\'e, inspired by the work \cite{GG}.  The following proof is extracted from \cite{ABDL-PS} where the result is proven for $s=1$.  Refer also to Proposition \ref{prop:ePo-weight} for a generalisation including suitable weights.
\begin{prop}[\textit{\textbf{Fractional $\e$-Poincar\'e}}]\label{prop:ePo} Assume that $d\in\N$, $d\ge 2$, and consider $s \in (0,1]$  and $-d < \g+2s \leq 0$, and
$$\frac{d}{d+2s+\g} < q  < \frac{d}{d+\g}. $$
Then, there exists $C_{0} >0$ depending only on $d, \g, s,q$ such that, for any $\e>0$ and suitable functions $\phi$ and $g\ge0$,
\begin{equation}\label{eq:estimatc}
\int_{\R^{d}}\phi^{2}\bm{c}_{\g}[g]\d v \leq \e\,\left\|\langle \cdot \rangle^{\frac{\g}{2}}\phi\right\|_{\dot{\mathbb{H}}^{s}}^{2}
+C_{0}\left( \|g\|_{L^1}  +\e^{-\frac{\nu}{s-\nu}}\left\|\langle \cdot \rangle^{|\g|}g\right\|_{L^q}^{\frac{s}{s-\nu}}\right)\int_{\R^{d}}\phi^{2}\langle v\rangle^{\g}\d v,
\end{equation}
where $\nu \in (0,s)$ is given by $\nu=\frac{d-q(d+\g)}{2q}.$
\end{prop}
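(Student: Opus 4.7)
The strategy is to reduce the singular convolution in $v_{*}$ to a pointwise-bounded tail plus a Riesz-type singular term amenable to Hardy-Littlewood-Sobolev, and then extract the $\dot{\mathbb{H}}^{s}$-part via Sobolev embedding. The key substitution is $\psi := \langle \cdot \rangle^{\g/2}\phi$, under which the left-hand side of \eqref{eq:estimatc} becomes
$$
\int_{\R^{d}\times\R^{d}} \psi(v)^{2}\,\langle v\rangle^{|\g|}\,|v-v_{*}|^{\g}\,g(v_{*})\,\d v\,\d v_{*},
$$
while $\|\psi\|_{L^{2}}^{2}=\int \phi^{2}\langle v\rangle^{\g}\d v$ and $\|\psi\|_{\dot{\mathbb{H}}^{s}}^{2}$ are precisely the quantities appearing on the right-hand side of the desired bound.

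I would then split the $v_{*}$-integration at $|v-v_{*}|=\tfrac12\langle v\rangle$. On the far region $\{|v-v_{*}|\geq \tfrac12 \langle v\rangle\}$, since $\g<0$ one has $|v-v_{*}|^{\g}\leq 2^{|\g|}\langle v\rangle^{\g}$, hence $\langle v\rangle^{|\g|}|v-v_{*}|^{\g}\leq C$, which contributes at most $C\|g\|_{L^{1}}\int \phi^{2}\langle v\rangle^{\g}\d v$. On the close region $\{|v-v_{*}|<\tfrac12\langle v\rangle\}$, an elementary geometric check (distinguishing $|v|\geq 2$ from $|v|<2$) gives $\langle v\rangle\leq C\langle v_{*}\rangle$, so the weight can be transferred to $g$; setting $\tilde g:=\langle \cdot\rangle^{|\g|}g$, dropping the indicator, and applying the Hardy-Littlewood-Sobolev inequality of Proposition \ref{prop:HLS} with $\lambda=|\g|$ and $m=q$ yields
$$
\int_{\R^{d}\times\R^{d}}\psi(v)^{2}\,|v-v_{*}|^{\g}\,\tilde g(v_{*})\,\d v\,\d v_{*}\leq C\,\|\psi\|_{L^{2p_{1}}}^{2}\,\|\tilde g\|_{L^{q}}, \qquad \tfrac{1}{p_{1}}=2-\tfrac{|\g|}{d}-\tfrac{1}{q},
$$
the upper constraint $q<d/(d+\g)$ being exactly what guarantees $p_{1}>1$ (HLS also requires $|\g|<d$, which is the hypothesis $\g>-d$).

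For the last step I would invoke the Sobolev embedding $\dot{\mathbb{H}}^{s}\hookrightarrow L^{2d/(d-2s)}$ together with Lebesgue interpolation between $L^{2}$ and $L^{2d/(d-2s)}$ to obtain
$$
\|\psi\|_{L^{2p_{1}}}^{2}\leq C\,\|\psi\|_{L^{2}}^{2(1-\theta)}\,\|\psi\|_{\dot{\mathbb{H}}^{s}}^{2\theta},\qquad \theta=\frac{d(p_{1}-1)}{2sp_{1}},
$$
and a direct algebraic simplification using the explicit value of $1/p_{1}$ shows that $\theta=\nu/s$ for $\nu$ as defined in the statement. Young's inequality with conjugate exponents $1/\theta$ and $1/(1-\theta)$ then splits this product into $\e\|\psi\|_{\dot{\mathbb{H}}^{s}}^{2}$ and a term carrying exactly the $\e$-power $-\theta/(1-\theta)=-\nu/(s-\nu)$ and $L^{q}$-power $1/(1-\theta)=s/(s-\nu)$ demanded by \eqref{eq:estimatc}. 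The main bookkeeping point---and the only place where the full range of $q$ is used---is that the lower bound $q>d/(d+2s+\g)$ is precisely equivalent to $\theta<1$, without which Young's inequality would degenerate and no strictly positive $\e$-weight would survive. Beyond this exponent accounting the proof is routine, and the argument is the quantitative fractional counterpart of the $s=1$ version established in \cite{ABDL-PS}.
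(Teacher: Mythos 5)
Your proposal is correct and follows essentially the same path as the paper: the same weight substitution $\psi=\langle\cdot\rangle^{\g/2}\phi$, the same near/far split at $|v-v_{*}|=\tfrac12\langle v\rangle$ with the identical weight transfer to $g$, the same Hardy--Littlewood--Sobolev step, and the same Young's-inequality conclusion with $\theta=\nu/s$. The only cosmetic difference is that you interpolate on the Lebesgue scale ($L^{2}\cap L^{2d/(d-2s)}\to L^{2p_{1}}$ after $\dot{\mathbb{H}}^{s}\hookrightarrow L^{2d/(d-2s)}$), whereas the paper interpolates on the homogeneous Sobolev scale ($\|\psi\|_{\dot{\mathbb{H}}^{\nu}}\leq\|\psi\|_{\dot{\mathbb{H}}^{s}}^{\nu/s}\|\psi\|_{L^{2}}^{(s-\nu)/s}$ after $\dot{\mathbb{H}}^{\nu}\hookrightarrow L^{2m}$), two equivalent ways of reaching the identical bound.
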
 
\begin{proof} 
 For a given  $g,\phi\ge 0$, we define
$$
\mathcal{A}[g,\phi] :=\int_{\R^{d}}\phi^{2}(v)\bm{c}_{\g}[g](v)\d v=\int_{\R^{d}\times \R^{d}}|v-\vet|^{\g}\phi^{2}(v)g(\vet)\d v\d\vet\,.
$$
For any $v,\vet \in \R^{d}$, if $|v-\vet| < \frac{1}{2}\langle v\rangle$, then $\langle v\rangle\leq 2\langle \vet\rangle$.  We deduce from this, see \cite[Eq. (2.5)]{amuxy}, that
$$|v-\vet|^{\g}\leq 2^{-\g}\langle v\rangle^{\g}\left(\ind_{\left\{|v-\vet| \geq\frac{\langle v\rangle}{2}\right\}}+
\langle \vet\rangle^{-\g}|v-\vet|^{\g}\ind_{\left\{|v-\vet|< \frac{\langle v\rangle}{2}\right\}}\right).$$
Therefore,
\begin{equation}\label{eq:ineq}
\mathcal{A}[g,\phi] \leq  2^{-\g}\left(\mathcal{A}_{1}[g,\phi]+\mathcal{A}_{2}[g,\phi]\right),
\end{equation}
where $\mathcal{A}_{1}[g,\phi],\mathcal{A}_{2}[g,\phi]$ denote respectively the ``regular'' and ``singular'' parts of $\mathcal{A}[g,\phi]$,
\begin{multline*}
\mathcal{A}_{1}[g,\phi]:=\int_{\R^{d}}\langle v\rangle^{\g}\phi^{2}(v)\d v \int_{|v-\vet| \geq \frac{\langle v\rangle}{2}} g(\vet)\d\vet\,,\\
\text{and}\qquad \mathcal{A}_{2}[g,\phi]:=\int_{\R^{d}}\langle \vet\rangle^{-\g}g(\vet)\d \vet\int_{|v-\vet| < \frac{1}{2}\langle v\rangle }|v-\vet|^{\g}\langle v\rangle^{\g}\phi^{2}(v)\d v.\end{multline*}
Introduce $G(v)=\langle v\rangle^{|\g|}g(v)$ and $\psi(v)=\langle v\rangle^{\frac{\g}{2}}\phi(v)\,$.  Clearly,
$$\mathcal{A}_{1}[g,\phi]  \leq \, \| {g}\|_{L^{1}} \|\psi\|_{L^{2}}^{2}\,,$$
while
$$\mathcal{A}_{2}[g,\phi] \leq \int_{\R^{d}\times \R^{d}}|v-\vet|^{\g}G(\vet)\psi^{2}(v)\d v\d\vet\,.$$
We estimate then $\mathcal{A}_{2}[g,\phi]$  thanks to  {Hardy-Littlewood-Sobolev} inequality \eqref{eq:HLS} to get
$$\mathcal{A}_{2}[g,\phi]  \leq C_{\mathrm{HLS}}\,\|G\|_{L^q}\,\|\psi^{2}\|_{L^{m}}=C_{\mathrm{HLS}}\,\|G\|_{L^{q}}\,\|\psi\|_{L^{2m}}^{2}, \qquad \frac{1}{m} =2-\frac{1}{q} {-\frac{|\g|}{d}}\quad  (1< q,m < \infty), $$
where $C_{\mathrm{HLS}}$ depends only on $\g, d, q$.   We apply this with $\frac{d}{d+2s+\g} < q< \frac{d}{d+\g}$ and $s \in (0,1]$, and thus write for some $\nu \in (0,s)$
\begin{equation}\label{eq:prHLS}
q=\frac{d}{d+\g+2\nu}, \qquad m=\frac{d}{d-2\nu}.\end{equation}
Notice that $2m=\frac{2d}{d-2\nu}$ and according for example to Theorem 1.38 of \cite{chemin}, $\dot{\mathbb{H}}^{\nu}$ is continuously embedded in $L^{2m}(\R^{d})$, that is, for a constant $C>0$ depending on $d,\nu,\g,q$,
$$\mathcal{A}_{2}[g,\phi] \leq C \|G\|_{L^q}\|\psi\|_{\dot{\mathbb{H}}^{\nu}}^{2}.$$
Moreover, since 
$$\|\psi\|_{\dot{\mathbb{H}}^{\nu}} \leq \|\psi\|_{\dot{\mathbb{H}}^{s}}^{\frac{\nu}{s}} \,\|\psi\|_{L^2}^{\frac{s-\nu}{s}}\,, $$
see for example \cite[Proposition 1.32]{chemin}, one has that
$$\mathcal{A}_{2}[g,\phi] \leq C \|G\|_{L^q} \|\psi\|_{\dot{\mathbb{H}}^{s}}^{\frac{2\nu}{s}}\,\|\psi\|_{L^2}^{\frac{2s-2\nu}{s}}.$$
Thanks to Young's inequality, given $\delta >0$, one deduces that
\begin{equation}\label{eq:A2C}
2^{-\g}\mathcal{A}_{2}[g,\phi] \leq \delta\,\|\psi\|_{\dot{\mathbb{H}}^{s}}^{2}+C^{\frac{s}{s-\nu}}\|G\|_{L^q}^{\frac{s}{s-\nu}}\,\delta^{-\frac{\nu}{s-\nu}}\|\psi\|_{L^2}^{2}\,.\end{equation}
Plugging this inequality into the estimate for $\mathcal{A}_{1}[g,\phi]$ we see that 
$$\mathcal{A}[g,\phi] \leq \delta\,\|\psi\|_{\dot{\mathbb{H}}^{s}}^{2} + C_{0}\left(\|g\|_{L^1} +\delta^{-\frac{\nu}{s-\nu}}\|G\|_{L^{q}}^{\frac{s}{s-\nu}}\right)\|\psi\|_{L^2}^{2},$$
for some positive constant $C_{0}$ depending on $s,\nu,\g,q$. This proves the result recalling the definition of $G$ and $\psi$. 
\end{proof}

\subsection{Appearance and propagation of $L^{p}$-norms}

With the estimates established in the previous sections, we can prove the main result about appearance and propagation of $L^{p}$-norms for solutions to the Boltzmann equation \eqref{eq:Boltz}, providing the proof of Theorem \ref{mps_g} in the Introduction.
 
\begin{proof}[Proof of Theorem \ref{mps_g}] We assume $f$ satisfies the Prodi-Serrin criterion \eqref{eq:ProSer1} with some fixed $q > \max\left(1,\frac{d}{d+\g+2s}\right).$ We write, as in \eqref{eq:prHLS}, $q=\frac{d}{d+\g+2\nu}$, $\nu \in [0,s]$ so that $r=\frac{s}{s-\nu}$ satisfies
$$\frac{d}{q}+\frac{2s}{r}=d+\g+2s.$$
Starting from \eqref{eq:dMp} and applying the above fractional $\e$-Poincar\'e inequality with $q$ as in  \eqref{eq:ProSer1}, $g=f(t,v)$ and $\phi=f^{\frac{p}{2}}(t,v)$, we deduce that, for any $\e >0$,
\begin{multline*}
\frac{\d}{\d t}\lM_{p}(t) + c_{p}\lD_{s,p}(t) 
\leq \e(p-1)\bm{C}_{B} \lD_{s,p}(t) \\
+ \left((p-1)\bm{C}_{B}C_{0}\left[\|f(t)\|_{L^1}+\e^{-\frac{\nu}{s-\nu}}\|\langle \cdot \rangle^{|\g|}f(t)\|_{L^q}^{\frac{s}{s-\nu}}\right]+C_{p}\right)\lM_{\g,p}(t)\,,\end{multline*}
with $\nu=\frac{d-q(d+\g)}{2q}.$ Choosing then $\e$ so that $\e(p-1)\bm{C}_{B}=\frac{c_{p}}{2}$ and noticing that $\|f(t)\|_{L^1}=\|f_{\rm in}\|_{L^1}$, we deduce that there exists $\bm{C}_{1}$ depending on $B$, $p,q,d$, $H(f_{\rm in})$ and $\|f_{\rm in}\|_{L^{1}_{2}}$ such that
\begin{equation}\label{eq:MainLp}\frac{\d}{\d t}\lM_{p}(t) + \frac{c_{p}}{2}\lD_{s,p}(t) \leq \bm{C}_{1}\left(1+\|\langle \cdot \rangle^{|\g|}f(t)\|_{L^q}^{\frac{s}{s-\nu}}\right)\lM_{\g,p}(t).\end{equation}
Recalling $r=\frac{s}{s-\nu}$, we deduce that  the mapping
$$\bm{\Lambda}(t)= \bm{C}_{1}\left(1+\|\langle \cdot \rangle^{|\g|}f(t)\|_{L^q}^{\frac{s}{s-\nu}}\right)=\bm{C}_{1}\left(1+\|\langle \cdot \rangle^{|\g|}f(t)\|_{L^q}^{r}\right) \in L^{1}([0,T])$$
falls under the Prodi-Serrin condition \eqref{eq:ProSer1}. Since $\lM_{\g,p}(t) \leq \lM_{p}(t)$, the inequality \eqref{eq:MainLp} implies that
$$\dfrac{\d}{\d t}\lM_{p}(t) \leq \bm{\Lambda}(t)\lM_{p}(t), \qquad \lM_{p}(0)=\|f_{\rm in}\|_{L^p}^{p} < \infty\,,$$
where we recall that $f_{\rm in} \in L^{p}(\R^{d})$. Then, Gronwall lemma implies that
$$\sup_{t\in [0,T]}\lM_{p}(t) \leq \exp\left(\int_{0}^{T}\bm{\Lambda}(t)\d t\right)\lM_{p}(0)\,,$$
which is nothing but \eqref{eq:propaLp1}.  In order to show the appearance of $L^{p}$-norms, we need to exploit the coercive term in the right-hand side of \eqref{eq:MainLp}. We begin with recalling the Sobolev embedding of $\dot{\mathbb{H}}^{s}(\R^{d}) \hookrightarrow L^{\frac{2d}{d-2s}}(\R^{d})$ with sharp constant $C_{\mathrm{Sob},s}$
$$\|g\|_{L^{\frac{2d}{d-2s}}} \leq C_{\mathrm{Sob},s}\|g\|_{\dot{\mathbb{H}}^{s}}, \qquad \forall g \in \dot{\mathbb{H}}^{s}(\R^{d})\,,$$
which implies
\begin{equation}\label{eq:sobo-p}
\left\|\langle \cdot\rangle^{\frac{\g}{p}}f(t)\right\|_{L^{\frac{pd}{d-2s}}}^{p}=\left\|\langle \cdot\rangle^{\frac{\g}{2}}f^{\frac{p}{2}}(t)\right\|_{L^{\frac{2d}{d-2s}}}^{2} \leq C_{\mathrm{Sob},s}^{2}\, \left\|\langle \cdot \rangle^{\frac{\g}{2}}f^{\frac{p}{2}}(t)\right\|_{\dot{\mathbb{H}}^{s}}^{2}\,. \end{equation}
As in \cite{ABDL-PS}, we resort to standard H\"older interpolation inequality with weights, 
\begin{equation}\label{int-ineqp_bis}
\|\langle \cdot \rangle^{a_{0}}g\|_{L^{r_{0}}} \leq \|\langle \cdot \rangle^{a_{1}}g\|_{L^{r_{1}}}^{\theta}\,\|\langle \cdot \rangle^{a_{2}}g\|_{L^{r_{2}}}^{1-\theta}\,,
\end{equation}
with $r_{0},r_1,r_2 \ge1$, $a_{0},a_1, a_2 \in \R$,
$$\frac{1}{r_{0}}=\frac{\theta}{r_{1}}+\frac{1-\theta}{r_{2}}, \qquad a_{0}=\theta\,a_{1}+(1-\theta)a_{2},  \qquad \theta \in (0,1).$$
Applying such an inequality with the choice $g=f(t)$ and 
$$ a_{0} :=0, \:\:a_1 :=\eta_{p},\:\: a_2 :=\frac{\g}{p}, \qquad r_{0} :=p,\:\: r_1 :=1, \:\:r_2 :=\frac{dp}{d-2s}, \qquad \theta := \frac{2s}{d(p-1)+2s}, $$ 
we get, with \eqref{eq:sobo-p}, that
\begin{equation*} 
\lM_{p}(t) \leq \lm_{\eta_{p}}(t)^{p\theta}\,\left\|\langle \cdot \rangle^{\frac{\g}{p}}f(t,\cdot)\right\|_{L^{\frac{dp}{d-2s}}}^{p(1-\theta)}\leq \lm_{\eta_{p}}(t)^{p\theta}C_{\mathrm{Sob},s}^{2-2\theta}\,\left\|\langle \cdot \rangle^{\frac{\g}{2}}f^{\frac{p}{2}}(t)\right\|_{\dot{\mathbb{H}}^{s}}^{2(1-\theta)},
 \end{equation*}
 holds for any $t\geq0.$ Reformulating this inequality as
 $$\lD_{s,p}(t) \geq C_{\mathrm{Sob},s}^{-2}\lm_{\eta_{p}}(t)^{-\frac{2sp}{d(p-1)}}\lM_{p}(t)^{1+\frac{2s}{d(p-1)}}\,,$$
 and plugging this into \eqref{eq:MainLp} we deduce that
$$\dfrac{\d}{\d t}\lM_{p}(t) + \frac{c_{p}}{C_{\mathrm{Sob},s}^{2}}\lm_{\eta_{p}}(t)^{-\frac{2sp}{d(p-1)}}\lM_{p}(t)^{1+\frac{2s}{d(p-1)}} \leq \bm{\Lambda}(t)\lM_{p}(t).$$
Defining $y_{p}(t)=\lM_{p}(t)\exp\left(-\int_{0}^{t}\bm{\Lambda}(\tau)\d\tau\right)$, one sees then that
$$\dfrac{\d}{\d t}y_{p}(t) \leq -\frac{c_{p}}{C_{\mathrm{Sob},s}^{2}}\lm_{\eta_{p}}(t)^{-\frac{2sp}{d(p-1)}}y_{p}(t)^{1+\frac{2s}{d(p-1)}}$$
which, after integration, yields  
$$y_{p}(t) \leq \left[\frac{2sc_{p}}{d(p-1)C_{\mathrm{Sob},s}^{2}}\left(\sup_{\tau\in [0,T]}\lm_{\eta_{p}}(t)\right)^{-\frac{2sp}{d(p-1)}}\,t\right]^{-\frac{d(p-1)}{2s}}.$$
Recalling that $\|f(t)\|_{L^p}=y_{p}(t)^{\frac{1}{p}}\exp\left(\dfrac{1}{p}\ds\int_{0}^{t}\bm{\Lambda}(\tau)\d\tau\right)$ and setting 
$$\bm{K}_{p,B,q,T}=\left[\frac{2sc_{p}}{d(p-1)C_{\mathrm{Sob},s}^{2}}\right]^{-\frac{d(p-1)}{2sp}}\exp\left(\frac{\bm{C}_{1}}{p}\int_{0}^{t}\left(1+\|\langle \cdot\rangle^{|\g|}f(t)\|_{L^q}^{r}\right)\d t\right)$$
gives the result.
\end{proof}
 
\subsection{Endpoint estimate for $r=1$}
In the aforementioned result the case $r=1$ has been excluded.  We can provide a criterion similar to \eqref{eq:ProSer1} in this case. Notice that $r=1$ corresponds to $q=\frac{d}{d+\g}$ and, for such a result, no additional moment $\la \cdot \ra^{|\g|}$ is needed in \eqref{eq:ProSer1-1}.  However, the criterion only covers the range $1 < p < \frac{d}{d+\g}$.
\begin{prop}
Let $f_{\rm in} \in \mathcal{Y}_{\rm in}$ be given and let $f=f(t,v)$ be a classical solution to Eq. \eqref{eq:Boltz} with 
$$B(u,\sigma)=|u|^{\g}b(\cos \theta)\,,$$
where $b(\cdot)$ satisfies  \eqref{eq:b0s}--\eqref{eq:g2s}.  Assume that the solution $f$ satisfies 
\begin{equation}\label{eq:ProSer1-1}
\,f \in L^{1}\big([0,T]\,;\,L^{\frac{d}{d+\g}}(\R^{d})\big)\,.\end{equation} 
Then, for any $p \in (1,\frac{d}{d+\g})$ the following integrability properties hold:
\begin{enumerate}
\item [(a)]If $f_{\rm in} \in L^{p}(\R^{d})$ then
\begin{equation}\label{eq:propaL1}
\sup_{t \in [0,T]}\|f(t)\|_{L^p} \leq \exp\left(\frac{\bm{C}_{0}}{p}\int_{0}^{T} \|f(t)\|_{L^{\frac{d}{d+\g}}} \d t\right)\|f_{\rm in}\|_{L^p}\,,
\end{equation}
for some explicit constant $\bm{C}_{0}$ depending on $p,d,B$ and $\varrho_{\rm in}$, $E_{\rm in}$, $H_{\rm in}$ but not on $T$ or $\|f_{\rm in}\|_{L^p}$.

\item  [(b)]If $f_{\rm in} \in L^{1}_{\eta_{p}}(\R^{d})$ with $\eta_{p}$ defined in Theorem \ref{mps_g} ($1 < p < \frac{d}{d+\g}$), then, the solution $f=f(t,v)$ satisfies  the following estimate for $t \in (0,T]$
	\begin{equation}\label{Lp-Kpq}
		\|f(t)\|_{L^p} \le \bm{K}_{p,B,T} \,\, t^{- \frac{d}{2s} \left(1 - \frac1{p} \right)}\sup_{\tau \in [0,T]}\lm_{\eta_{p}}(\tau),\end{equation}
	where $\bm{K}_{p,B,T}$ is explicitly depending on $\ds\int_{0}^{T} \|f(t)\|_{L^{\frac{d}{d+\g}} }\d t$ and also on $p$, $B$, $d$ and $\varrho_{\rm in}$, $E_{\rm in}$, $H_{\rm in}$.
\end{enumerate}
\end{prop}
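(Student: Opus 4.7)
The plan is to mimic the proof of Theorem~\ref{mps_g} with the simplifications coming from the endpoint $r=1$, $q=d/(d+\g)$, which formally corresponds to the parameter $\nu=0$ in Proposition~\ref{prop:ePo}. The starting point is once again the differential inequality of Proposition~\ref{prop:dLmp},
\begin{equation*}
\dfrac{\d}{\d t}\lM_{p}(t)+c_{p}\lD_{s,p}(t)\leq (p-1)\bm{C}_{B}\int_{\R^{d}}\bm{c}_{\g}[f(t)](v)\,f^{p}(t,v)\d v + C_{p}\lM_{\g,p}(t),
\end{equation*}
and the task is to control the nonlocal drift integral by $\|f\|_{L^{d/(d+\g)}}$ rather than by the weighted quantity appearing for $\nu>0$.

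The key step is an endpoint variant of the fractional $\e$-Poincar\'e inequality, namely
\begin{equation*}
\int_{\R^{d}}\phi^{2}\,\bm{c}_{\g}[g]\,\d v\leq \bm{C}_{0}\bigl(\|g\|_{L^{1}}+\|g\|_{L^{d/(d+\g)}}\bigr)\int_{\R^{d}}\phi^{2}\langle v\rangle^{\g}\d v,
\end{equation*}
for a suitable class of nonnegative $\phi,g$. I would revisit the splitting $\mathcal{A}[g,\phi]\leq 2^{-\g}(\mathcal{A}_{1}+\mathcal{A}_{2})$ used in the proof of Proposition~\ref{prop:ePo}: the piece $\mathcal{A}_{1}$ is already bounded by $\|g\|_{L^{1}}\|\psi\|_{L^{2}}^{2}$ with $\psi=\langle \cdot\rangle^{\g/2}\phi$; for $\mathcal{A}_{2}$, at the endpoint the conjugate HLS exponent $m$ degenerates to $m=1$, so the Sobolev embedding $\dot{\mathbb{H}}^{\nu}\hookrightarrow L^{2m}$ becomes trivial and no Young step is required to absorb a $\dot{\mathbb{H}}^{s}$-norm. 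The HLS endpoint failure itself can be side-stepped either by a slight off-endpoint shift that exploits the strict inequality $p<d/(d+\g)$, or by a direct near/far decomposition $|v-v_{*}|^{\g}=|v-v_{*}|^{\g}\ind_{|v-v_{*}|\leq 1}+|v-v_{*}|^{\g}\ind_{|v-v_{*}|>1}$ combined with Young's convolution inequality on the near part. Either route avoids the extra moment $\langle\cdot\rangle^{|\g|}$ on $g$ that appears in Proposition~\ref{prop:ePo} when $\nu>0$.

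Once this endpoint estimate is available, the proof of (a) follows as in Theorem~\ref{mps_g}: applying it with $g=f(t)$, $\phi=f(t)^{p/2}$ and using $\lM_{\g,p}\leq \lM_{p}$ yields
\begin{equation*}
\dfrac{\d}{\d t}\lM_{p}(t)\leq \bm{\Lambda}(t)\,\lM_{p}(t),\qquad \bm{\Lambda}(t)=\bm{C}_{0}\bigl(1+\|f(t)\|_{L^{d/(d+\g)}}\bigr),
\end{equation*}
with $\bm{\Lambda}\in L^{1}([0,T])$ thanks to \eqref{eq:ProSer1-1}, and Gronwall's lemma gives \eqref{eq:propaL1}. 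For part (b) I would keep the coercive term $c_{p}\lD_{s,p}$ on the left-hand side and combine the Sobolev embedding $\dot{\mathbb{H}}^{s}\hookrightarrow L^{2d/(d-2s)}$ with the same weighted H\"older interpolation between $L^{1}_{\eta_{p}}$ and $L^{pd/(d-2s)}$ employed in Theorem~\ref{mps_g}; the resulting Bernoulli-type inequality for $y_{p}(t)=\lM_{p}(t)\exp(-\int_{0}^{t}\bm{\Lambda})$ integrates to the algebraic decay $t^{-d(p-1)/(2sp)}$ of \eqref{Lp-Kpq}. The main technical obstacle is precisely the endpoint estimate of the second paragraph: the pure HLS inequality fails at $m=1$, and the strict constraint $p\in(1,d/(d+\g))$ is exactly what one expects to provide the room needed to close the argument without requiring a weight on the Prodi-Serrin norm.
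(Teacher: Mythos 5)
There is a genuine gap. The ``endpoint variant of the fractional $\e$-Poincar\'e inequality'' that you propose to prove, namely
\begin{equation*}
\int_{\R^{d}}\phi^{2}\,\bm{c}_{\g}[g]\,\d v\leq \bm{C}_{0}\bigl(\|g\|_{L^{1}}+\|g\|_{L^{d/(d+\g)}}\bigr)\int_{\R^{d}}\phi^{2}\langle v\rangle^{\g}\d v,
\end{equation*}
cannot be true for independent $\phi$ and $g$: by localising $\phi$ near a point it would give the pointwise bound $\bm{c}_{\g}[g](v)\lesssim \langle v\rangle^{\g}\bigl(\|g\|_{L^{1}}+\|g\|_{L^{q_{0}}}\bigr)$ with $q_{0}=d/(d+\g)$, which fails because $q_{0}'=d/|\g|$ is exactly the exponent for which $\int_{|z|\leq 1}|z|^{\g q_{0}'}\d z=\int_{|z|\le 1}|z|^{-d}\d z$ diverges. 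Your two proposed side-steps do not close this: (i) Young's inequality on the near part $|\cdot|^{\g}\ind_{|\cdot|\leq 1}\ast g$ with an $L^{\infty}$ target forces the conjugate exponent $p_{2}<d/|\g|$, hence $p_{1}>q_{0}$ strictly; keeping an $L^{a}$ target with $a<\infty$ does allow $\|g\|_{L^{q_{0}}}$, but after H\"older against $\|\phi^{2}\|_{L^{a'}}=\|f\|_{L^{pa'}}^{p}$ one is left with $\|f\|_{L^{pa'}}^{p}\|f\|_{L^{q_{0}}}$ with $pa'>p$, and interpolating $\|f\|_{L^{pa'}}$ between $L^{p}$ and $L^{q_{0}}$ leaves a leftover factor $\|f\|_{L^{q_{0}}}^{p(1-\theta)+1}$ that is not linear in $\|f\|_{L^{q_{0}}}$, so it is not in $L^{1}_{t}$ under \eqref{eq:ProSer1-1}; (ii) a genuine ``off-endpoint shift'' of Proposition~\ref{prop:ePo} would produce a term $\|\langle\cdot\rangle^{|\g|}g\|_{L^{q}}^{s/(s-\nu)}$ with some $\nu>0$, i.e.\ a \emph{weighted} norm raised to a power $r=s/(s-\nu)>1$, neither of which matches the unweighted, $r=1$ hypothesis.

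The paper does not pass through any $\e$-Poincar\'e-type inequality here; it estimates the drift integral directly with the structure $\phi^{2}=f^{p}$, $g=f$ built in. Concretely, apply HLS to $\int|v-\vet|^{\g}f^{p}(v)f(\vet)\,\d v\,\d\vet$ with exponents $(a,b)$ satisfying $\tfrac1a+\tfrac{|\g|}{d}+\tfrac1b=2$ and the choice $b=p$; then $\tfrac1a=2-\tfrac{|\g|}{d}-\tfrac1p$, and $a>1$ is \emph{equivalent} to $p<q_{0}$, which is precisely how the strict inequality enters. A short computation shows $p<pa<q_{0}$ on the whole open interval $p\in(1,q_{0})$, so one can interpolate $\|f\|_{L^{pa}}\leq\|f\|_{L^{p}}^{\theta}\|f\|_{L^{q_{0}}}^{1-\theta}$ with $\theta=1/p'$; inserting this gives exactly $\int \bm{c}_{\g}[f]f^{p}\,\d v\leq C_{d,\g,p}\|f\|_{L^{q_{0}}}\|f\|_{L^{p}}^{p}$ (note the power $p$, the displayed inequality in the paper has a typo), which is what feeds into Gronwall. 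This coupled HLS-plus-interpolation step is the missing ingredient; your parts (a)-Gronwall and (b)-appearance are then identical to the paper and correct.
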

\begin{proof} Starting from Proposition \ref{prop:dLmp} one observes from \eqref{eq:dMp} that only the term
$$\int_{\R^{d}}\bm{c}_{\g}[f(t)](v)f^{p}(t,v)\d v$$
needs to be estimated.  Notice that under \eqref{eq:ProSer1-1}, the $\e$-Poincar\'e inequality and the regularisation effect induced are not needed. Indeed, as in \cite{ABDL-PS}, one can deduce directly from the Hardy-Littlewood-Sobolev inequality \eqref{eq:HLS} that
$$ \int_{\R^{d}}\bm{c}_{\g}[f(t)](v)f^{p}(t,v)\d v\ \leq C_{d,\g,p}\|f(t)\|_{L^{q_{0}}}\|f(t)\|_{L^{p}}, \quad q_{0}=\frac{d}{d+\g}, \quad 1 < p < \frac{d}{d+\g}\,,$$
for some constant depending only on $d,\g$ and $p$.  This and \eqref{eq:dMp} give that
$$\frac{\d}{\d t}\lM_{p}(t) + c_{p}\lD_{s,p}(t) \leq \Lambda_{0}(t)\lM_{p}(t)\,,$$
with 
$$\Lambda_{0}(t)=C_{d,\g,p}(p-1)\bm{C}_{B}\|f(t)\|_{L^{q_{0}}}+C_{p} \in L^{1}([0,T])$$
according to \eqref{eq:ProSer1-1}. This proves the propagation of the $L^{p}$-norms for $1 < p < \frac{d}{d+\g}$. The proof of the appearance follows then the lines of the Theorem \ref{mps_g}.
\end{proof}
\begin{rmq} It is possible, as in \cite[Proposition ]{ABDL-PS} to derive a criterion in the endpoint estimate $r=\infty$, $q=\frac{d}{d+\g+2s}$ as well. Such a case requires a smallness assumption on the norm
$$\sup_{t \in [0,T]}\|\la \cdot \ra^{|\g|}f(t)\|_{L^{\frac{d}{d+\g+2s}}}.$$
Details are left to the reader.
\end{rmq}
\section{Adding moments}\label{sec:weights}
We focus in this section on the evolution of weighted-$L^{p}$ norms of $f(t,v)$ as defined in \eqref{eq:MkpD} adapting the strategy of the previous section in order to incorporate weights.
\subsection{Evolution of weighted $L^{p}$-norms}  Let $f_{\rm in} \in \mathcal{Y}_{\rm in}$ be given and let $f=f(t,v)$ be a classical solution  to Eq. \eqref{eq:Boltz} with 
 $$B(u,\sigma)=|u|^{\g}b(\cos \theta)$$
where $b(\cdot)$ satisfies \eqref{eq:b0s} and $\g+2s <0.$
We recall the notation \eqref{eq:MkpD} and compute the evolution of $\lM_{k,p}(t)$, for $k \geq0$. To do so, we introduce the additional notation
$$F(t,v)=\langle v\rangle^{\frac{k}{p}}f(t,v)\,,$$
so that $\lM_{k,p}(t)=\|F(t)\|_{L^p}^{p}.$ One has that
$$\dfrac{1}{p}\dfrac{\d}{\d t}\lM_{k,p}(t)=\frac{1}{p}\dfrac{\d}{\d t}\|F(t)\|_{L^p}^{p}=\int_{\R^{d}}\Q(f,f)F^{p-1}(t,v) \la v \ra^{ \frac{k}{p} } \d v,$$
that is,
$$\dfrac{1}{p}\dfrac{\d}{\d t}\lM_{k,p}(t)=\int_{\R^{2d}\times \S^{d-1}}f(t,\vet)f(t,v)\left[F^{p-1}(t,v')\langle v'\rangle^{\frac{k}{p}}-F^{p-1}(t,v)\langle v\rangle^{\frac{k}{p}}\right]B(u,\sigma)\d \sigma\d\vet\d v.$$
We follow \cite[Section 2.2]{ricardo} to compute $f(t,v)\left[F^{p-1}(t,v')\langle v'\rangle^{\frac{k}{p}}-F^{p-1}(t,v)\langle v\rangle^{\frac{k}{p}}\right]$.  Write
$$\frac{1}{p}\dfrac{\d}{\d t}\lM_{k,p}(t)=\mathscr{J}_{1}[f(t)]+\mathscr{J}_{2}[f(t)]+\mathscr{J}_{3}[f(t)]\,,$$
with
\begin{equation*}
\mathscr{J}_{1}[f(t)]=\int_{\R^{2d}\times\S^{d-1}}f(t,\vet)F(t,v)\left[F^{p-1}(t,v')-F^{p-1}(t,v)\right]B(u,\sigma)\d\sigma\d\vet\d v\,,
\end{equation*}
\begin{equation*}
\mathscr{J}_{2}[f(t)]=\int_{\R^{2d}\times\S^{d-1}}f(t,\vet)f(t,v)F^{p-1}(t,v)\left[\langle v'\rangle^{\frac{k}{p}}-\langle v\rangle^{\frac{k}{p}}\right]B(u,\sigma)\d\sigma\d\vet\d v\,,
\end{equation*}
and
\begin{equation*}\begin{split}
\mathscr{J}_{3}[f(t)]=\int_{\R^{2d}\times\S^{d-1}}f(t,\vet)f(t,v)&\left[F^{p-1}(t,v')-F^{p-1}(t,v)\right]\\
&\phantom{++}\times \left[\langle v'\rangle^{\frac{k}{p}}-\langle v\rangle^{\frac{k}{p}}\right]B(u,\sigma)\d\sigma\d\vet\d v\,.\end{split}
\end{equation*}
The estimates of the various terms $\mathscr{J}_{i}[f(t)]$ are given in a series of lemmata.
\begin{lem}\label{lem:J1} Let $f_{\rm in} \in \mathcal{Y}_{\rm in}$ be given and let $f(t,\cdot)$ be a classical solution  to \eqref{eq:Boltz} associated to 
$$B(u,\sigma)=|u|^{\g}b(\cos \theta)\,,$$
where $b(\cdot)$ satisfies \eqref{eq:b0s} and $\g+2s <0.$  Given $p >1$, one has that
\begin{equation}\label{eq:J1}
\mathscr{J}_{1}[f(t)] + \frac{1}{\max(p,p')}\mathscr{D}\left[f(t),F^{\frac{p}{2}}(t)\right] \leq  \frac{ {\|\tilde{b}\|_{L^{1}(\S^{d-1})}}}{p'}\int_{\R^{d}}\bm{c}_{\g}[f(t)]F^{p}(t,v)\d v\,,\end{equation}
where $\tilde{b}(\cdot)$ has been defined in Remark \ref{nb:canc} and $\mathscr{D}$ is defined in \eqref{eq:MDG}.
\end{lem}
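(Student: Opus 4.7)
The plan is to mimic the proof of Proposition \ref{prop:dLmp} essentially verbatim, except that the unweighted quantity $f$ is replaced by the weighted quantity $F = \langle v\rangle^{k/p} f$ in the ``outer'' factors, while the weight $\langle v_*\rangle^0$ on $f(v_*)$ is kept (so no extra commutator involving weights is needed here, since that burden has already been moved into $\mathscr{J}_2$ and $\mathscr{J}_3$). The starting point is the numerical inequality \eqref{eq:XY} applied with $X = F(v')$ and $Y = F(v)$, namely
\begin{equation*}
F(v)\bigl[F^{p-1}(v')-F^{p-1}(v)\bigr] \leq \frac{1}{p'}\bigl[F^p(v')-F^p(v)\bigr] - \frac{1}{\max(p,p')}\bigl[F^{p/2}(v')-F^{p/2}(v)\bigr]^2,
\end{equation*}
which is the same tool used to split $\mathcal{I}_1$ and $\mathcal{I}_2$ in the proof of Proposition \ref{prop:dLmp}.

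Multiplying this pointwise inequality by $f(t,v_*)B(u,\sigma) \geq 0$ and integrating in $(v,v_*,\sigma)$ decomposes $\mathscr{J}_1[f(t)]$ into two pieces. The second piece is exactly $-\frac{1}{\max(p,p')}\mathscr{D}[f(t),F^{p/2}(t)]$ by the definition of the dissipation functional in Lemma \ref{lem:coerc}, which moves to the left-hand side and produces the desired coercive term. Thus it only remains to control the first piece, that is,
\begin{equation*}
\frac{1}{p'}\int_{\R^{2d}\times\S^{d-1}} f(t,v_*)\bigl[F^p(v')-F^p(v)\bigr]\,B(u,\sigma)\,\d\sigma\d v_*\d v.
\end{equation*}

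Here I would apply the Cancellation Lemma \ref{lem:canc} to the inner $(v,\sigma)$-integration with $F^p$ in place of the generic function in \eqref{eq:Can}: this produces $(F^p \ast S_B)(v_*)$. Using Remark \ref{nb:canc}, $S_B(z) = \|\tilde b\|_{L^1(\S^{d-1})}|z|^\gamma$, so the convolution collapses to
\begin{equation*}
(F^p \ast S_B)(v_*) = \|\tilde b\|_{L^1(\S^{d-1})} \int_{\R^d} F^p(v)\,|v-v_*|^\gamma \d v.
\end{equation*}
Integrating against $f(t,v_*)$ in $v_*$ and applying Fubini recognizes $\bm{c}_\gamma[f(t)](v) = \int f(t,v_*)|v-v_*|^\gamma \d v_*$, giving precisely the right-hand side of \eqref{eq:J1}. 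Rearranging yields the stated inequality.

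There is no real obstacle here: the argument is a direct transcription of the one used for $\mathcal{I}_1$--$\mathcal{I}_2$ in Proposition \ref{prop:dLmp}, with $f^{p/2}$ replaced by $F^{p/2}$. The only subtlety to keep in mind is that the weight $\langle v\rangle^{k/p}$ sits on $F$ alone and not on $f(t,v_*)$, which is why this step produces no weight commutator and no contribution involving $\langle v_*\rangle^{k/p} - \langle v\rangle^{k/p}$; those terms are instead isolated into $\mathscr{J}_2$ and $\mathscr{J}_3$, to be treated separately in the lemmata that follow.
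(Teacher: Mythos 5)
Your proof is correct and follows essentially the same route as the paper: you apply \eqref{eq:XY} with $X = F(v')$, $Y = F(v)$ to split $\mathscr{J}_1$ into the two pieces $\mathscr{J}_{1,1}$ and $\mathscr{J}_{1,2}$ (the latter being $\frac{1}{\max(p,p')}\mathscr{D}[f(t),F^{p/2}(t)]$ by definition), and then use the Cancellation Lemma together with Remark \ref{nb:canc} and Fubini to bound $\mathscr{J}_{1,1}$ by the claimed convolution term. This matches the paper's proof verbatim.
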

\begin{proof} The proof follows exactly the lines of  the proof of Proposition \ref{prop:dLmp}.  Notice that
$$
\mathscr{J}_{1}[f(t)] \leq \mathscr{J}_{1,1}[f(t)]-\mathscr{J}_{1,2}[f(t)]\,,$$
with
\begin{equation*}\begin{split}
\mathscr{J}_{1,1}[f(t)]&=
\frac{1}{p'}\int_{\R^{2d}\times\S^{d-1}}f(t,\vet)\left[F^{p}(t,v')-F^{p}(t,v)\right]B(u,\sigma)\d\sigma\d\vet\d v, \\
\mathscr{J}_{1,2}[f(t)]&= \frac{1}{\max(p,p')}\int_{\R^{2d}\times\S^{d-1}}f(t,\vet)
\left[F^{\frac{p}{2}}(t,v')-F^{\frac{p}{2}}(t,v)\right]^{2}B(u,\sigma)\d\sigma\d\vet\d v.
\end{split}\end{equation*}
The computations in Proposition \ref{prop:dLmp} shows that
$$\mathscr{J}_{1,1}[f(t)] \leq \frac{\|\tilde{b}\|_{L^{1}(\S^{d-1})}}{p'}\int_{\R^{d}}\bm{c}_{\g}[f(t)]F^{p}(t,v)\d v\,,$$
while $\mathscr{J}_{1,2}[f(t)]=\frac{1}{\max(p,p')}\mathscr{D} \left[f(t),F^{\frac{p}{2}}(t)\right]$.\end{proof}
For the second term $\mathscr{J}_{2}[f(t)]$ one has the following lemma.
\begin{lem}\label{lem:J2} Let $f_{\rm in} \in \mathcal{Y}_{\rm in}$ be given and let $f(t,\cdot)$ be a  classical  solution to \eqref{eq:Boltz} associated to 
$$B(u,\sigma)=|u|^{\g}b(\cos \theta)\,,$$
where $b(\cdot)$ satisfies \eqref{eq:b0s} and $\g+2s <0.$  Given $p >1$ and $k \geq 2p$ there exists $c_{k,p}(B) >0$ depending only on $p,k$ and on the collision kernel $B$ such that
\begin{equation}\label{eq:J2}
\mathscr{J}_{2}[f(t)] \leq c_{k,p}(B)  \int_{\R^{d}}\bm{c}_{\g+2s}\left[\langle \cdot\rangle^{ \frac{k}{p}-2s} f(t)\right](v) F^{p}(t,v) d v\,,\end{equation}
where we recall that $\bm{c}_{\g+2s}$ is defined through \eqref{eq:cgamma}.
\end{lem}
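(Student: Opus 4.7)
The plan is to apply a second-order Taylor expansion of $\psi(w):=\langle w\rangle^{k/p}$ between $v$ and $v'$, writing
$$\langle v'\rangle^{k/p}-\langle v\rangle^{k/p}=\nabla\psi(v)\cdot(v'-v)+\mathcal{R}(v,v'),\qquad |\mathcal{R}(v,v')|\leq C_{a}\,|v'-v|^{2}\sup_{\xi\in[v,v']}\langle\xi\rangle^{k/p-2},$$
and then substituting this decomposition into $\mathscr{J}_{2}[f(t)]$. This splits $\mathscr{J}_{2}$ into a linear-in-$(v'-v)$ contribution $\mathscr{J}_{2,1}$ (coming from $\nabla\psi(v)\cdot(v'-v)$) and a quadratic remainder $\mathscr{J}_{2,2}$ (coming from $\mathcal{R}$). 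The two pieces are treated via distinct $\sigma$-level identities and then combined by elementary algebra in $(v,v_{*})$.

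For $\mathscr{J}_{2,2}$, I use $|v'-v|^{2}=|u|^{2}\sin^{2}(\theta/2)$ together with the finiteness of $\int_{\mathbb{S}^{d-1}}\sin^{2}(\theta/2)b(\cos\theta)\,d\sigma$: under \eqref{eq:b0s} and $s<1$ the integrand is $O(\theta^{1-2s})$ near $\theta=0$, hence integrable. The hypothesis $k\geq 2p$ makes $a:=k/p\geq 2$, so $\sup_{\xi\in[v,v']}\langle\xi\rangle^{a-2}\leq C(\langle v\rangle+\langle v_{*}\rangle)^{a-2}$, using $|\xi-v|\leq|u|\leq\langle v\rangle+\langle v_{*}\rangle$. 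Multiplying by $|u|^{\gamma}$ yields a pointwise estimate of type $|u|^{\gamma+2}(\langle v\rangle+\langle v_{*}\rangle)^{a-2}$; the conversion to the target $|u|^{\gamma+2s}$ scaling then follows from $|u|^{2-2s}\leq(\langle v\rangle+\langle v_{*}\rangle)^{2-2s}$ (valid since $s<1$), after which the resulting polynomial weights $\langle v\rangle^{j}\langle v_{*}\rangle^{\ell}$ are absorbed into $\langle v\rangle^{k}\langle v_{*}\rangle^{a-2s}$ via the algebraic identity $k=(k/p)\cdot p$ and $a\geq 2$.

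For $\mathscr{J}_{2,1}$ the essential tool is the $\sigma$-level cancellation
$$\int_{\mathbb{S}^{d-1}}u^{-}\,b(\cos\theta)\,d\sigma=\frac{u}{2}\int_{\mathbb{S}^{d-1}}(1-\cos\theta)\,b(\cos\theta)\,d\sigma,$$
obtained by azimuthal symmetry of $\sigma$ about $\hat u$ and finite under \eqref{eq:b0s} (the integrand $(1-\cos\theta)b(\cos\theta)$ again gives an $O(\theta^{1-2s})$ grazing singularity). This collapses the $\sigma$-integration of the linear term into the deterministic factor $\nabla\psi(v)\cdot u=(k/p)\langle v\rangle^{a-2}(v\cdot u)$. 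To avoid the $|u|^{\gamma+1}$-factor produced by the naïve estimate $|v\cdot u|\leq|v||u|$ (which cannot be pointwise dominated by $|u|^{\gamma+2s}$ when $s\geq 1/2$), I invoke the polarization identity
$$2\,v\cdot u=|u|^{2}+|v|^{2}-|v_{*}|^{2},$$
and split $\mathscr{J}_{2,1}$ accordingly. The $|u|^{2}$-piece is handled exactly as $\mathscr{J}_{2,2}$. For the $|v|^{2}-|v_{*}|^{2}$-piece I further split the $v_{*}$-integration into $\{|v_{*}|\leq|v|/2\}$ (where $|u|\gtrsim\langle v\rangle$ allows $|u|^{1-2s}$ to be absorbed into polynomial weights even when $1-2s<0$) and $\{|v_{*}|>|v|/2\}$ (where $\langle v_{*}\rangle\sim\langle v\rangle$ so the factor $\langle v_{*}\rangle^{a-2s}$ in the target compensates the $\langle v\rangle^{a-1}$ from the Taylor term).

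The principal obstacle is precisely $\mathscr{J}_{2,1}$: a direct first-order Taylor estimate produces an $|u|^{\gamma+1}$-factor which, unlike $|u|^{\gamma+2}$, cannot be pointwise converted to $|u|^{\gamma+2s}$ once $s\geq 1/2$, since the ratio $|u|^{1-2s}$ is unbounded as $u\to 0$. The interplay of the spherical cancellation (which already regularizes the grazing singularity from $\theta^{-1-2s}$ to $\theta^{1-2s}$), the polarization identity of $v\cdot u$, and the $|v_{*}|$-versus-$|v|$ dichotomy is what ultimately extracts the required $|u|^{\gamma+2s}$-scaling and matches the right-hand side $c_{k,p}(B)\int_{\R^{d}}\bm{c}_{\gamma+2s}[\langle\cdot\rangle^{a-2s}f(t)](v)\,F^{p}(t,v)\,dv$ of \eqref{eq:J2}.
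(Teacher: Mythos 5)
Your overall approach—a second-order Taylor expansion of $\langle\cdot\rangle^{k/p}$ combined with the azimuthal cancellation $\int_{\mathbb{S}^{d-1}}u^{-}\,b\,d\sigma=\tfrac{u}{2}\int_{\mathbb{S}^{d-1}}(1-\cos\theta)\,b\,d\sigma$—is the same mechanism the paper packages into the pointwise bound \eqref{eq:cancel-int} of Lemma~\ref{lem:cancel}, so your route is essentially the paper's. The quadratic remainder and the $|u|^{2}$-piece of the polarization $2v\cdot u=|u|^{2}+|v|^{2}-|v_*|^{2}$ are handled correctly, and you rightly identify the $(|v|^{2}-|v_*|^{2})$-part of the linear term as the delicate point.

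That part, however, does not close on the set $\{|v_*|>|v|/2\}$ when $s>1/2$. There is no lower bound on $|u|=|v-v_*|$ on that set, and the only generic estimate is $\bigl||v|^{2}-|v_*|^{2}\bigr|=\bigl||v|-|v_*|\bigr|\,(|v|+|v_*|)\le|u|\,(|v|+|v_*|)$, which is genuinely first order in $|u|$ near the diagonal (take $v,v_*$ collinear and close). Multiplied by $|u|^{\gamma}$ this leaves an $|u|^{\gamma+1}$-factor that cannot be pointwise converted to $|u|^{\gamma+2s}$ once $2s>1$: one would have to absorb the unbounded factor $|u|^{1-2s}$ near $u=0$, and the dichotomy $|v_*|\lessgtr|v|/2$ gives no information there. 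Your assertion ``$\langle v_*\rangle\sim\langle v\rangle$'' on this region is also only one-sided, but the essential obstruction is the near-diagonal set. (The complementary region $\{|v_*|\le|v|/2\}$, where $|u|\approx|v|$ and $\bigl||v|^{2}-|v_*|^{2}\bigr|\lesssim|v|^{2}\approx|u|^{2}$, is handled correctly.) Note that the same tension is visible in the paper's own Lemma~\ref{lem:cancel}: already for $\ell=2$ the left-hand side of \eqref{eq:cancel-int} equals exactly $\tfrac{1}{2}\bigl||v|^{2}-|v_*|^{2}\bigr|\int_{\mathbb{S}^{d-1}}(1-\cos\theta)b\,d\sigma$, which for $v=R\,e_{1}$, $v_*=(R+1)e_{1}$, $R\to\infty$ grows like $R$, while the right-hand side with $\alpha>1/2$ grows only like $R^{2-2\alpha}$. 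Any repair must treat the linear drift contribution differently—for instance by exploiting the sign of the $\sigma$-averaged linear term, writing $\langle v\rangle^{-2}(|v|^{2}-|v_*|^{2})=1-\langle v_*\rangle^{2}\langle v\rangle^{-2}$ and discarding the nonpositive part, which yields a bound involving $\bm{c}_{\gamma}[\langle\cdot\rangle^{2}f]$ rather than $\bm{c}_{\gamma+2s}[\langle\cdot\rangle^{k/p-2s}f]$—rather than by a pointwise conversion to the $|u|^{\gamma+2s}$-scale.
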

\begin{proof} Since 
$$\int_{0}^{\pi}b(\cos\theta)\sin^{d}\theta\d\theta = C_{b} < \infty\,,$$
under assumption \eqref{eq:b0s}, one can use \eqref{eq:cancel-int} with $\ell=\frac{k}{p} \ge 2$ and $\alpha=s$ to deduce that there exists $C >0$ depending on $k,p,s,b$ such that 
$$\int_{\S^{d-1}}b(\cos\theta)\left(\langle v'\rangle^{\frac{k}{p}}-\langle v\rangle^{\frac{k}{p}}\right)\d \sigma \leq C|v-\vet|^{2s} \left(\langle v\rangle^{\frac{k}{p}-2s}+\langle  \vet\rangle^{\frac{k}{p}-2s}\right).$$
Therefore,
\begin{equation*}\begin{split}
\mathscr{J}_{2}[f(t)]&=\int_{\R^{2d}\times\S^{d-1}}f(t,\vet)f(t,v)F^{p-1}(t,v)\left[\langle v'\rangle^{\frac{k}{p}}-\langle v\rangle^{\frac{k}{p}}\right]B(u,\sigma)\d\sigma\d\vet\d v\\
&\leq C\int_{\R^{2d}}|v-\vet|^{\g+2s}f(t,\vet) f(t,v)F^{p-1}(t,v)\langle v\rangle^{\frac{k}{p}-2s}\d v\d\vet\\
&\phantom{+++ } +C\int_{\R^{2d}}|v-\vet|^{\g+2s}\langle \vet\rangle^{\frac{k}{p}-2s}f(t,\vet)f(t,v)F^{p-1}(t,v)\d v\d\vet.\end{split}
\end{equation*}
Observing that $\langle \cdot \rangle^{\frac{k}{p}}f F^{p-1}=F^{p}$ we get that
$$\mathscr{J}_{2}[f(t)] \leq C\max_{\beta=2s, \frac{k}{p}  } \int_{\R^{d}}\bm{c}_{\g+2s}\left[\langle \cdot\rangle^{\beta-2s} f(t)\right](v)\left(\langle v\rangle^{-\beta}F^{p}(t,v)\right)\d v\,,$$
which gives the result.\end{proof}
The most delicate term is $\mathscr{J}_{3}[f(t)]$ addressed in the following lemma.
\begin{lem}\label{lem:J3} Let $f_{\rm in} \in \mathcal{Y}_{\rm in}$ be given and let $f(t,\cdot)$ be a classical solution  to \eqref{eq:Boltz} associated to 
$$B(u,\sigma)=|u|^{\g}b(\cos \theta)\,,$$
where $b(\cdot)$ satisfies \eqref{eq:b0s} and $\g+2s <0.$ Let $p >1$ and $k \geq 2p$ be given. For any $\delta >0$ there exists  $C_{\delta}(B) >0$ depending only on $p,k$ and on the collision kernel $B$ such that
\begin{equation}\label{eq:J3}
\mathscr{J}_{3}[f(t)] \leq \delta\mathscr{D}\left[f(t),F^{\frac{p}{2}}(t)\right] 
+C_{\delta}(B)\int_{\R^{d}} F^{p}(t,v)\bm{c}_{\g+2s}\left[\langle \cdot \rangle^{2\frac{k}{p}-2s}f(t) \right](v) \d v\,,\end{equation}
where $\mathscr{D}[\cdot,\cdot]$ is defined in Lemma \ref{lem:coerc}. 
\end{lem}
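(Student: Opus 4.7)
The plan is to mirror the treatment of $\mathscr{J}_{1}$ in Lemma \ref{lem:J1}: isolate via Young's inequality a coercive square $[F^{p/2}(v')-F^{p/2}(v)]^{2}$ to be absorbed into $\delta\,\mathscr{D}[f,F^{p/2}]$, while tuning the remainder to match exactly the target form $F^{p}(v)\,\bm{c}_{\g+2s}[\la\cdot\ra^{2k/p-2s}f](v)$. Compared to $\mathscr{J}_{2}$, both factors $F^{p-1}(v')-F^{p-1}(v)$ and $\la v'\ra^{k/p}-\la v\ra^{k/p}$ are present, so the analysis is \emph{quadratic} in their smallness, which is what allows the extraction of the coercive square.

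\textbf{Algebraic reduction and Young.} The starting point is the pointwise inequality $|X^{p-1}-Y^{p-1}|\leq C_{p}(X^{p/2-1}+Y^{p/2-1})|X^{p/2}-Y^{p/2}|$ valid for $X,Y\geq 0$ and $p\geq 2$ (with a H\"older-type substitute when $1<p<2$). I then apply a \emph{weighted} Young inequality of the form $xy\leq \delta\,x^{2}/f(v)+f(v)\,y^{2}/(4\delta)$ with $x=|F^{p/2}(v')-F^{p/2}(v)|$. The weight $1/f(v)$ is chosen so that after multiplying by the natural measure $f(v_*)f(v)B(u,\sigma)\,\d\sigma\d v_*\d v$ and integrating, the first term reproduces precisely $\delta\,\mathscr{D}[f,F^{p/2}]$ with only the $f(v_*)$-factor demanded by the definition of $\mathscr{D}$. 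The remainder reads
\begin{equation*}
\frac{C_{p,\delta}}{4}\int_{\R^{2d}\times \S^{d-1}}f(v_*)f^{2}(v)\bigl(F^{p-2}(v)+F^{p-2}(v')\bigr)\bigl(\la v'\ra^{k/p}-\la v\ra^{k/p}\bigr)^{2}B(u,\sigma)\,\d\sigma\d v_*\d v,
\end{equation*}
and the key identity $f^{2}(v)F^{p-2}(v)=\la v\ra^{-2k/p}F^{p}(v)$ converts its pre-collisional part into a $F^{p}(v)$-expression of the desired shape.

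\textbf{Angular integration and weight matching.} A Taylor expansion of $v\mapsto \la v\ra^{k/p}$ combined with the identity $|v'-v|=|v-v_*|\sin(\theta/2)$ yields the angular bound
\begin{equation*}
\int_{\S^{d-1}}b(\cos\theta)\bigl(\la v'\ra^{k/p}-\la v\ra^{k/p}\bigr)^{2}\d\sigma\leq C|v-v_*|^{2}\bigl(\la v\ra^{2(k/p-1)}+\la v_*\ra^{2(k/p-1)}\bigr).
\end{equation*}
The discrepancy between the resulting $|v-v_*|^{\g+2}$ and the target $|v-v_*|^{\g+2s}$ is closed by the interpolation $|v-v_*|^{\g+2}\leq C|v-v_*|^{\g+2s}\bigl(1+\la v\ra^{2-2s}+\la v_*\ra^{2-2s}\bigr)$, after which the cross-products $\la v\ra^{a}\la v_*\ra^{b}$ are redistributed via $\la v\ra^{a}\la v_*\ra^{b}\leq \la v\ra^{a+b}+\la v_*\ra^{a+b}$, the relevant exponents being nonnegative thanks to the hypothesis $k\geq 2p$. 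The post-collisional piece $F^{p-2}(v')$ is handled by AM--GM $f^{2}(v)f^{p-2}(v')\leq \tfrac{2}{p}f^{p}(v)+\tfrac{p-2}{p}f^{p}(v')$ (for $p\geq 2$), combined with the triangular bound $\la v'\ra^{k(p-2)/p}\leq C(\la v\ra+\la v_*\ra)^{k(p-2)/p}$; the $f^{p}(v')$-term is then brought back to $f^{p}(v)$ through the standard pre-post collisional change of variables $(v,v_*,\sigma)\leftrightarrow(v',v_*',\sigma^{*})$, which has unit Jacobian and leaves both $B$ and $|v-v_*|$ invariant.

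\textbf{Main obstacle.} The central technical difficulty is this weight accounting: ensuring that the remainder collapses into the \emph{single} expression $F^{p}(v)\bm{c}_{\g+2s}[\la\cdot\ra^{2k/p-2s}f](v)$ rather than a family of differently weighted $\bm{c}$-terms relies on both the interpolation $|v-v_*|^{\g+2}\to|v-v_*|^{\g+2s}$ and the full strength of $k\geq 2p$ to absorb all cross-weights; handling the post-collisional factor $F^{p-2}(v')$ without losing integrability is the most delicate point. The case $1<p<2$ requires replacing the base algebraic inequality by a H\"older-continuity estimate for the concave power $x\mapsto x^{2-2/p}$, but the overall structure is unchanged.
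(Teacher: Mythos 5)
The overall strategy is sound and your treatment of the pre-collisional contribution $F^{p-2}(v)$ works, but the post-collisional piece $F^{p-2}(v')$ carries a genuine gap for $p>2$. After your Young step the remainder contains
$$f(\vet)\,f^{2}(v)\,F^{p-2}(v')\,\big(\la v'\ra^{k/p}-\la v\ra^{k/p}\big)^{2}\,B(u,\sigma)\,,$$
with $F^{p-2}(v')=\la v'\ra^{k(p-2)/p}f^{p-2}(v')$. Your AM--GM on $f^{2}(v)f^{p-2}(v')$ produces in particular $\tfrac{2}{p}f^{p}(v)\,\la v'\ra^{k(p-2)/p}$, and the triangular bound $\la v'\ra^{k(p-2)/p}\lesssim \la v\ra^{k(p-2)/p}+\la \vet\ra^{k(p-2)/p}$ leaves a branch carrying $\la \vet\ra^{k(p-2)/p}$. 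Combining this with $\la \vet\ra^{2k/p-2}$ from the weight-cancellation bound and $\la \vet\ra^{2-2s}$ from your interpolation $|v-\vet|^{\g+2}\to|v-\vet|^{\g+2s}$ yields the total $\vet$-weight $\la\vet\ra^{k(p-2)/p+2k/p-2+2-2s}=\la\vet\ra^{k-2s}$, which for $p>2$ strictly exceeds the target $\la\vet\ra^{2k/p-2s}$. Since this branch is attached to $f^{p}(v)$ (not $F^{p}(v)=\la v\ra^{k}f^{p}(v)$), there is no compensating $\la v\ra^{k}$ available, and the excess $\la\vet\ra^{k-2k/p}$ cannot be traded for powers of $\la v\ra$ in the region $|v-\vet|\geq 1$. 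So the remainder does not reduce to $\bm{c}_{\g+2s}[\la\cdot\ra^{2k/p-2s}f]F^{p}$.

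The obstruction comes from the choice of algebraic inequality. Bounding $|X^{p-1}-Y^{p-1}|\leq C_{p}(X^{p/2-1}+Y^{p/2-1})|X^{p/2}-Y^{p/2}|$ detaches one power of $F$ from the difference, producing $F^{p/2-1}(v')$, and your attempt to re-attach $f^{2}(v)$ via the weighted Young crosses the pre/post-collisional line. The paper instead uses \eqref{eq:X2}, i.e.
$$Y\big|X^{p-1}-Y^{p-1}\big|\leq \big|X^{p/2}-Y^{p/2}\big|\big(X^{p/2}+Y^{p/2}\big)\,,$$
which keeps the $F(v)$-factor glued to the difference from the start (note that it is applied to $F(v)|F^{p-1}(v')-F^{p-1}(v)|$, not to the difference alone). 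After Young the remainder then contains $F^{p}(v')+F^{p}(v)$ with no residual weight on $v'$, so the regular change of variables $(v',\vet)\to(v,\vet)$ takes $F^{p}(v')$ to $F^{p}(v)$ without generating any new $\la\vet\ra$ factors beyond the $\la\vet\ra^{2k/p-2s}$ already produced by \eqref{eq:weight_cancellation}. This inequality also works uniformly for all $p>1$, avoiding the separate $1<p<2$ analysis you flag at the end. (The paper further uses $\alpha=s$ in \eqref{eq:weight_cancellation} and the splitting $\tilde b=\sqrt{b_{s}b_{s-1}}$ rather than your $\alpha=1$ Taylor expansion plus interpolation on $|v-\vet|$; this part of your argument is not where the gap lies and is a legitimate alternative, but it must be paired with \eqref{eq:X2} to close.)
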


\begin{proof} Omit here the dependence of $t>0$ for clarity and recall that
\begin{equation*}
\mathscr{J}_{3}[f]=\int_{\R^{2d}\times\S^{d-1}}|v-\vet|^{\g}f(\vet)f(v)\left[F^{p-1}(v')-F^{p-1}(v)\right]\left[\langle v'\rangle^{\frac{k}{p}}-\langle v\rangle^{\frac{k}{p}}\right]b_{s}(\cos\theta)\d\sigma\d\vet\d v\,
\end{equation*}
where $F=\langle \cdot \rangle^{\frac{k}{p}}f$ and write $b_{s}$ instead of $b$ to emphasise the strength of the singularity in \eqref{eq:b0s}. Estimating the difference of weights thanks to \eqref{eq:weight_cancellation}, with $\ell=\frac{k}{p}$ and $\alpha=s \in (0,1)$, induces the control
$$\mathscr{J}_{3}[f] \leq C_{\frac{k}{p},s}\left(\mathscr{J}_{3,1}[f]+\mathscr{J}_{3,2}[f]\right)\,,$$
where, for $\tilde{b}(\cos \theta)=  \sin\left( \tfrac{\theta}{2} \right)b(\cos\theta)$,
$$\mathscr{J}_{3,1}[f]:=\int_{\R^{2d}\times\S^{d-1}} |v - \vet |^{\gamma+s}\tilde{b}(\cos \theta) f( \vet) \la v\ra^{-s}F( v) \left| F^{p-1}( v') - F^{p-1}(v) \right| \d \sigma\d\vet\d v\,,$$
and where the most delicate term to manage is
\begin{equation*}\begin{split}
\mathscr{J}_{3,2}[f]&:= \int_{\R^{2d}\times\S^{d-1}} |v - \vet|^{\gamma+s}\tilde{b}(\cos \theta) F(\vet)\la \vet\ra^{-s}  \la v \ra^{-\frac{k}{p}} F(v) \left| F^{p-1}(v')- F^{p-1}(v) \right| \d \sigma\d\vet\d v.\end{split}
\end{equation*}
Using the inequality \eqref{eq:X2} gives that
\begin{equation}\label{eq:FX2}
F(v)\left|F^{p-1}(v')-F^{p-1}(v)\right| \leq \left|F^{\frac{p}{2}}(v')-F^{\frac{p}{2}}(v)\right|\,\left[F^{\frac{p}{2}}(v')+F^{\frac{p}{2}}(v)\right]\,,\end{equation}
where, we observe, if $p=2$ there is no need for the second term $F^{\frac{p}{2}}(v)=F(v)$, one simply keeps the estimate $F\left|F(v')-F(v)\right|$.
Moreover,
$$\tilde{b}(\cos \theta)=\sqrt{b(\cos\theta)}\sqrt{\sin^{2}\left(\tfrac{\theta}{2}\right)b(\cos\theta)}=\sqrt{b_{s}(\cos\theta)b_{s-1}(\cos\theta)}\,,$$
where $b_{s-1}(\cos \theta) \sim  b_{0}\theta^{1-2s}= b_{0}\theta^{-1-2(s-1)}$
satisfies \eqref{eq:b0s} with $s$ replaced by $s-1$ (in particular, $\tilde{b} \sim b_{s-\frac{1}{2}}$). Thus
\begin{multline*}
\mathscr{J}_{3,1}[f] \leq \int_{\R^{2d}\times\S^{d-1}}\sqrt{b_{s}(\cos\theta)}\left|F^{\frac{p}{2}}(v')-F^{\frac{p}{2}}(v)\right|\,|v-\vet|^{\frac{\g}{2}}\\
\times \sqrt{b_{s-1}(\cos\theta)}\left[F^{\frac{p}{2}}(v')+F^{\frac{p}{2}}(v)\right] |v-\vet|^{\frac{\g}{2}+s}\d\mu_f,
\end{multline*}
with $\d\mu_f=f(\vet)\la v\ra^{-s}\d\sigma\d\vet\d v.$
Using Young's inequality, it follows that, for $\delta >0$,
$$\mathscr{J}_{3,1}[f] \leq \delta\,\mathscr{D}[f,F^{\frac{p}{2}}] + \frac{1}{2 \delta} \Big(\mathcal{B}_{1}[f,F^{p}]+ \mathcal{B}_{2}[f,F^{p}]\ind_{p\neq 2}\Big),$$
where, for suitable functions $g$ and $h$, one introduces $$\mathcal{B}_{1}[g,h]=\int_{\R^{2d}\times \S^{d-1}}|v-\vet|^{\g+2s}b_{s-1}(\cos\theta)g(\vet) \la v\ra^{-2s}h (v')\d\sigma\d\vet\d v,$$
and
$$\mathcal{B}_{2}[g,h]=\int_{\R^{2d}\times \S^{d-1}}|v-\vet|^{\g+2s}b_{s-1}(\cos\theta)g(\vet) \la v\ra^{-2s}h (v)\d\sigma\d\vet\d v.$$
Recall that, for $p=2$, one does not use \eqref{eq:FX2} and the term $\mathcal{B}_{2}[f,F^{p}]$ is not required.  In the same way
\begin{multline*}
\mathscr{J}_{3,2}[f] \leq \int_{\R^{2d}\times\S^{d-1}}\sqrt{b_{s}(\cos\theta)}\left|F^{\frac{p}{2}}(v')-F^{\frac{p}{2}}(v)\right|\,|v-\vet|^{\frac{\g}{2}}\times\\\times \sqrt{b_{s-1}(\cos\theta)}\left[F^{\frac{p}{2}}(v')+F^{\frac{p}{2}}(v)\right]\langle \vet\rangle ^{\frac{k}{p}-s}\langle v\rangle^{s-\frac{k}{p}}\,|v-\vet|^{\frac{\g}{2}+s}\d {\mu_f},\end{multline*}
and Young's inequality shows now that 
$$\mathscr{J}_{3,2}[f] \leq \delta \,\mathscr{D}\left[f,F^{\frac{p}{2}} \right] + \frac{1}{2\delta} \left(
\mathcal{B}_{1}\left[\la \cdot \ra^{\frac{2k}{p}-2s}f,F^{p}\right]+\mathcal{B}_{2}\left[\la \cdot \ra^{\frac{2k}{p}-2s}f,F^{p}\right]\ind_{p\neq 2}\right).$$
Since $b_{s-1} \in L^1\left( \S^{d-1} \right)$, we use the regular change of variables $(v', v_*) \rightarrow (v, v_*)$, see for instance \cite[Proof of Lemma 1]{ADVW} or \cite[Proposition 2.3, equation (2.5)]{CG}, for estimating $\mathcal{B}_{1}$.  Thus, there exists $C_{b} >0$ depending only on $b$ such that, for nonnegative functions $g$ and $h$,
$$\mathcal{B}_{1}[g,h] + \mathcal{B}_{2}[g,h] \leq C_{b}\int_{\R^{2d}}|v-\vet|^{\g+2s}g(\vet)\,h(v)\d \vet\d v=C_{b}\int_{\R^{d}}\bm{c}_{\g+2s}[g](v)h(v)\d v.$$
Gathering these estimates, we obtain that for any $\delta >0$ there exists $C_{\delta} >0$ depending on $\delta,k,p$ such that
$$\mathscr{J}_{3}[f] \leq 2\delta\mathscr{D}\left[f,F^{\frac{p}{2}}\right] +C_{\delta}\sum_{\beta=0,\frac{k}{p}-s}\int_{\R^{3}}\bm{c}_{\g+2s}[\la \cdot \ra^{2\beta}f](v)F^{p}(v)\d v,$$
which gives the result after keeping the leading term and diminishing $\delta$ to $\frac{\delta}{2}$.\end{proof}
\subsection{Weighted fractional $\e$-Poincar\'e inequality} 
The estimates for $\mathscr{J}_{1}[f],$ $\mathscr{J}_{2}[f]$ and $\mathscr{J}_{3}[f]$ are of different nature.  Indeed, $\mathscr{J}_{1}[f]$ involve $\bm{c}_{\g}[f]$ (with the parameter $\g$ and without weight on $f$) whereas $\mathscr{J}_{i}[f]$ (for $i=2,3$) involve $\bm{c}_{\g+2}[\la \cdot \ra^{\ell}f]$, that is, a term with milder singularity $\bm{c}_{\g+2}$ but with weights acting on $f$.  In order to compare the various terms in an unified way, we need a version of the $\e$-Poincar\'e inequality \eqref{eq:estimatc} which handles weights and the milder parameter $\g+2$.  The milder singular term allows to compensate the action of the weights using the same parameter $q$ at the price of adding suitable $L^{1}$-moments. 

\begin{prop}[\textit{\textbf{Weighted fractional $\e$-Poincar\'e}}]
\label{prop:ePo-weight}
Assume that $d\in\N$, $d\ge 2$, $s \in (0,1]$  and $-d < \g+2s < 0$, and
\begin{equation}\label{eq:rangeq}
\frac{d}{d+2s+\g} < q  < \frac{d}{d+\g}. \end{equation}
For any $\beta >0$, $\bm{a} > \beta +|\g|$, and $\e >0$, there exist a positive constant $C_{\e} >0$ depending on $\beta,s,\nu,d,\bm{a}$ and $\e>0$  such that
\begin{multline}\label{eq:AA}
\int_{\R^{d}} \phi^{2}(v)\,\bm{c}_{\g+2s}\left[\langle \cdot \rangle^{\beta}g\right](v)\d v \leq \e\,\|\langle \cdot \rangle^{\frac{\g}{2}}\phi\|_{\dot{\mathbb{H}}^{s}}^{2}
+ C_{\e}\left(\|\langle \cdot \rangle^{\bm{a}}g\|_{L^1} +\|\langle \cdot \rangle^{|\g|}g\|_{L^q}^{\frac{s}{s-\nu}}\right)\| \phi\|_{L^2}^{2}\,,\end{multline}
holds for any $g \geq0$ and $\phi$ sufficiency smooth.
\end{prop}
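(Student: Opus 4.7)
The plan is to extend the argument of Proposition \ref{prop:ePo} by incorporating the weight $\langle\cdot\rangle^\beta$ on $g$ inside $\bm{c}_{\g+2}$ and by exploiting the milder singular exponent so that the remainder in \eqref{eq:AA} involves the \emph{unweighted} $L^2$-norm of $\phi$. Setting $\mathcal{A}[g,\phi]:=\int_{\R^d}\phi^2(v)\,\bm{c}_{\g+2}[\langle\cdot\rangle^\beta g](v)\,\d v$, I would decompose the inner integration via the standard dichotomy $|v-v_*|\geq \langle v\rangle/2$ (regular) versus $|v-v_*|<\langle v\rangle/2$ (singular), as in \eqref{eq:ineq}, yielding $\mathcal{A}\leq C(\mathcal{A}_1+\mathcal{A}_2)$.

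For the regular part $\mathcal{A}_1$, the kernel $|v-v_*|^{\g+2}$ is bounded away from its singularity because $|v-v_*|\geq 1/2$; bounding it by a finite sum of monomials $\langle v\rangle^a\langle v_*\rangle^b$ via $|v-v_*|\leq \langle v\rangle+\langle v_*\rangle$ and collapsing all $v$-weights using $\langle v\rangle\leq 2|v-v_*|$ causes the contribution to factorise as a weighted $L^1$-norm of $g$ times $\|\phi\|_{L^2}^2$. The resulting weighted $L^1$-norm is absorbed into $\|\langle\cdot\rangle^{\bm a}g\|_{L^1}$ thanks to the hypothesis $\bm a>\beta+|\g|$.

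For the singular part $\mathcal{A}_2$, the key reduction $\langle v\rangle\leq 2\langle v_*\rangle$ allows to transfer the weight $\langle v\rangle^{|\g|}$ from $\phi$ onto $g$: setting $\psi:=\langle\cdot\rangle^{\g/2}\phi$, one has
$$\mathcal{A}_2\leq 2^{|\g|}\iint|v-v_*|^{\g+2}\,\langle v_*\rangle^{\beta+|\g|}g(v_*)\,\psi^2(v)\,\d v\d v_*.$$
To isolate the Prodi--Serrin norm $\|\langle\cdot\rangle^{|\g|}g\|_{L^q}$ prescribed in \eqref{eq:AA}, I would split the $v_*$-integration by a cut-off $\langle v_*\rangle\lessgtr R$: on $\{\langle v_*\rangle\leq R\}$ the extra factor $\langle v_*\rangle^\beta$ is bounded by $R^\beta$ and the resulting integral is estimated by Hardy-Littlewood-Sobolev \eqref{eq:HLS} exactly as in the proof of Proposition \ref{prop:ePo}, producing $R^\beta C\|\langle\cdot\rangle^{|\g|}g\|_{L^q}\|\psi\|_{L^{2m}}^2$; on $\{\langle v_*\rangle>R\}$ the pointwise bound $\langle v_*\rangle^{\beta+|\g|}\leq R^{\beta+|\g|-\bm a}\langle v_*\rangle^{\bm a}$ (admissible since $\bm a>\beta+|\g|$) converts the integrand into an $L^1$-moment of $g$, contributing to $\|\langle\cdot\rangle^{\bm a}g\|_{L^1}\|\phi\|_{L^2}^2$.

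Finally, the Sobolev embedding $\dot{\mathbb{H}}^\nu\hookrightarrow L^{2m}$ with $\nu=(d-q(d+\g))/(2q)$, the interpolation $\|\psi\|_{\dot{\mathbb{H}}^\nu}\leq \|\psi\|_{\dot{\mathbb{H}}^s}^{\nu/s}\|\psi\|_{L^2}^{(s-\nu)/s}$, and Young's inequality split the HLS bound into $\e\|\psi\|_{\dot{\mathbb{H}}^s}^2$ plus $C_\e\|\langle\cdot\rangle^{|\g|}g\|_{L^q}^{s/(s-\nu)}\|\psi\|_{L^2}^2$, verbatim as in Proposition \ref{prop:ePo}; since $\g<0$, $\|\psi\|_{L^2}\leq \|\phi\|_{L^2}$, yielding \eqref{eq:AA}. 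The main obstacle is the bookkeeping of weights in the singular part: the cut-off radius $R$ must be chosen so that the low-weight $L^q$ contribution and the high-weight $L^1$ contribution combine cleanly into the form stated in \eqref{eq:AA} without introducing dependencies beyond those allowed by the statement, and this balance is precisely what the strict inequality $\bm a>\beta+|\g|$ guarantees (making the exponent $R^{\beta+|\g|-\bm a}$ genuinely decaying in $R$).
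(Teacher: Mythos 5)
Your plan mirrors the overall structure of the paper's proof (regular/singular split, weight transfer onto $g$, Hardy--Littlewood--Sobolev together with the embedding $\dot{\mathbb{H}}^\nu\hookrightarrow L^{2m}$ and Young), but the singular part as written does not go through, because you keep the kernel exponent $\g+2$ intact when invoking HLS. With the prescribed $q=\frac{d}{d+\g+2\nu}$ (which is matched to $\g$, not to $\g+2$), the HLS conjugate exponent $m'$ for the kernel $|v-\vet|^{\g+2}$ (in the case $\g+2<0$) would have to satisfy
$$\frac{1}{m'}=2-\frac{1}{q}-\frac{|\g+2|}{d}=1+\frac{2-2\nu}{d}>1,$$
i.e.\ $m'<1$, so HLS is inapplicable; and worse, the hypotheses only force $\g<-2s$, so $\g+2$ can be nonnegative, in which case $|v-\vet|^{\g+2}$ is not even a Riesz potential. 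The $\{\langle \vet\rangle>R\}$ piece has the same defect: after bounding $\langle \vet\rangle^{\beta+|\g|}\leq R^{\beta+|\g|-\bm{a}}\langle \vet\rangle^{\bm{a}}$, the remaining double integral still carries $|v-\vet|^{\g+2}$, so it is not an $L^1$-moment of $g$ times $\|\phi\|_{L^2}^2$ --- that would require $\sup_{\vet}\int|v-\vet|^{\g+2}\psi^2(v)\,\d v<\infty$, which fails for $\psi\in L^2$.

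The missing idea is the \emph{kernel degradation}. The paper decomposes by $|v-\vet|\lessgtr1$ (rather than $\lessgtr\langle v\rangle/2$) precisely so that on the singular region $|v-\vet|^{\g+2}\leq|v-\vet|^{\g+\alpha}$ for any $\alpha\in(0,2)$; choosing $\alpha$ small enough that $\g+\alpha+2s\leq0$ turns the singular piece into $\int \psi^2 \,\bm{c}_{\g+\alpha}[\langle\cdot\rangle^{\beta+\alpha}g]\,\d v$ with $\psi=\langle\cdot\rangle^{-\alpha/2}\phi$, to which the \emph{unweighted} Proposition \ref{prop:ePo} with exponent $\g+\alpha$ applies as a black box, producing $\|\langle\cdot\rangle^{|\g+\alpha|+\beta+\alpha}g\|_{L^{q_\alpha}}$ for a new exponent $q_\alpha\in(1,q)$ strictly below $q$. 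This $L^{q_\alpha}$-norm is then H\"older-interpolated between $\|\langle\cdot\rangle^{\bm{a}}g\|_{L^1}$ and $\|\langle\cdot\rangle^{|\g|}g\|_{L^q}$ --- a genuine interpolation precisely because $q_\alpha<q$. Your $R$-truncation separates weights on $g$ but does nothing to repair the exponent mismatch between $\g+2$ and $\g$, which is the real obstruction.
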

\begin{rmq} Observe that the last term in the weighted $\e$-Poincar\'e inequality \eqref{eq:AA} is $\|\phi\|_{L^2}^{2}$ and differs from \eqref{eq:estimatc} which involves $\|\la \cdot\ra^{\frac{\g}{2}}\phi\|_{L^2}$. This comes from the estimate of $\mathcal{I}_{2}$ hereafter and, likely, can be improved.
\end{rmq}
\begin{rmq} Although the parameter involved in $\bm{c}_{\g+2s}$ is $\g+2s >\g$, the parameter $q$ in \eqref{eq:rangeq} is the one associated to the parameter $\g$.  We reconcile such disparity by purposely raising the singularity of the term $\bm{c}_{\g+2s}[\la \cdot\ra^{\beta}]$ to a term $\bm{c}_{\g+\alpha}[\la \cdot\ra^{\beta}g]$ for $\alpha\in(0,2 {s})$ sufficiently small.
\end{rmq}
\begin{proof}  Let the functions $\phi$ and $g\geq0$ be fixed.  We set $G_{\l}=\la \cdot \ra^{\l}g$ for any $\l \geq0$ and introduce 
$$
\mathcal{I}=\ds\int_{\R^{d}}\phi^{2}(v)\,\bm{c}_{\g+2s}\left[G_{\beta}\right]\d v=\mathcal{I}_{1}+\mathcal{I}_{2},$$
with 
\begin{equation*} 
\mathcal{I}_{1} =\ds\int_{|v-\vet| \leq1}|v-\vet|^{\g+2s}\phi^{2}(v)G_{\beta}(\vet)\d v\d\vet\,,
\quad \mathcal{I}_{2} =\int_{|v-\vet|>1}|v-\vet|^{\g+2s}\phi^{2}(v)G_{\beta}(\vet)\d v\d\vet. \end{equation*}  {Using that $\g+2s < 0$}, it holds
$$\mathcal{I}_{2} \leq \|G_{\beta}\|_{L^1}\, \|\phi\|^{2}_{L^{2}}.$$
Now, for some suitable $\alpha \in (0,2s)$ to be determined we observe that
$$\mathcal{I}_{1} \leq \int_{|v-\vet|\leq 1}\phi^{2}(v)|v-\vet|^{\g+\alpha}G_{\beta}(\vet)\d\vet\d v.$$
Since on the region $|v-\vet| \leq1$ it holds $\la \vet\ra\simeq \la v\ra$, there is $C_{\alpha} >0$ such that
$$G_{\beta}(\vet)=G_{\beta}(\vet)\langle \vet\ra^{-\alpha}\la \vet\ra^{\alpha} \leq C_{\alpha} \la v\ra^{-\alpha}\,G_{\beta+ \alpha}(\vet),$$
and, therefore,
$$\mathcal{I}_{1} \leq C_{\alpha}\int_{\R^{d}}\psi^{2}(v)\bm{c}_{\g+\alpha}[G_{\beta+\alpha}]\d v\,, \quad  \quad \psi(v)=\la v\ra^{-\frac{\alpha}{2}}\phi(v).$$ 
Then, use the $\e$-Poincar\'e inequality \eqref{eq:estimatc} corresponding to the parameter $\g+\alpha$ and some 
$$\frac{d}{d+2s+\g+\alpha} < q_{\alpha}  < \frac{d}{d+\g+\alpha},$$
from which we deduce that for any $\e >0$ there exists $C_{\e} >0$ depending only on $d, \g, \alpha,s,q_{\alpha}$ such that, for $\nu_{\alpha}=\frac{d-q_{\alpha}(d+\g+ {\alpha})}{2q_{\alpha}}$,
\begin{equation*}\begin{split} \label{eq:estimatc0}
\mathcal{I}_{1}  &\leq \e\,\left\|\langle \cdot \rangle^{\frac{\g+\alpha}{2}}\psi\right\|_{\dot{\mathbb{H}}^{s}}^{2}
+C_{\e}\left( \|G_{\beta+\alpha}\|_{L^1} +\left\|\langle \cdot \rangle^{|\g+\alpha|}G_{\beta+\alpha}\right\|_{L^{q_{\alpha}}}^{\frac{s}{s-\nu_{\alpha}}}\right)\int_{\R^{d}}\psi^{2}(v)\langle v\rangle^{\g+\alpha}\d v\\
&= \e\,\left\|\langle \cdot \rangle^{\frac{\g}{2}}\phi\right\|_{\dot{\mathbb{H}}^{s}}^{2}
+C_{\e}\left( \|G_{\beta+\alpha}\|_{L^{1}} +\left\|\langle \cdot \rangle^{|\g+\alpha|}G_{\beta+\alpha}\right\|_{L^{q_{\alpha}}}^{\frac{s}{s-\nu_{\alpha}}}\right)\|\langle \cdot \rangle^{\frac{\g}{2}}\phi\|_{L^2}^{2}.\end{split}
\end{equation*}
Let us choose $q$ as in \eqref{eq:rangeq} so that, similar to \eqref{eq:prHLS}, we can write
$$q=\frac{d}{d+\g+2\nu}, \qquad q_{\alpha}=\dfrac{d}{d+\g+\alpha+2\nu_{\alpha}}\,,$$ with $\nu$ and $\nu_{\alpha} \in [0,s]$.  Now, it is possible for a suitable choice of $\alpha$ and $q_{\alpha}$ (or equivalently $\nu_{\alpha}$) to estimate $\left\|\langle \cdot \rangle^{|\g+\alpha|}G_{\beta+\alpha}\right\|_{L^{q_{\alpha}}}^{\frac{s}{s-\nu_{\alpha}}}$ with $\left\|\langle \cdot \rangle^{|\g|}g\right\|_{L^q}^{\frac{s}{s-\nu}}$ and suitable moments of $g$. Indeed, using the interpolation \eqref{int-ineqp_bis} and recalling that $G_{\beta+\alpha}=\la \cdot \ra^{\beta+\alpha}g$ we deduce that
$$\left\|\langle \cdot \rangle^{|\g+\alpha|+\beta+\alpha}g\right\|_{L^{q_{\alpha}}} \leq \left\|\langle \cdot \rangle^{\bm{a}}g\right\|_{L^1}^{1-\theta}\,\left\|\langle \cdot \rangle^{|\g|}g\right\|_{L^q}^{\theta}\,,$$
with
\begin{equation}\label{eq:1qalp}
\frac{1}{q_{\alpha}}=1-\theta+\frac{\theta}{q}, \qquad |\g+\alpha|+\beta+\alpha=\bm{a}(1-\theta)+|\g|\theta.
\end{equation}
In other words,
$$\left\|\langle \cdot \rangle^{|\g+\alpha|}G_{\beta+\alpha}\right\|_{L^{q_{\alpha}}}^{\frac{s}{s-\nu_{\alpha}}} \leq \left\|\langle \cdot \rangle^{\bm{a}}g\right\|_{L^1}^{\frac{s(1-\theta)}{s-\nu_{\alpha}}}\,\left\|\langle \cdot \rangle^{|\g|}g\right\|_{L^q}^{\frac{s\theta}{s-\nu_{\alpha}}}.$$
Moreover, we can pick $\alpha \in (0,2)$ sufficiently small so that $\frac{s\theta}{s-\nu_{\alpha}} <\frac{s }{s-\nu}$. From \eqref{eq:1qalp} one deduces
$$\theta=\frac{\g+\alpha+2\nu_{\alpha}}{\g+2\nu}=1+\frac{\alpha+2(\nu_{\alpha}-\nu)}{\g+2\nu},$$
where we notice that $\g+2\nu < 0$. Therefore, 
$$\theta \in (0,1) \iff 2\nu < \alpha+2\nu_{\alpha} < |\g|.$$
One can choose, for example, $\nu_{\alpha}=\nu$ and get $0<\theta=1+\frac{\alpha}{\g+2\nu} < 1$ with $\alpha < |\g+2\nu|$. Such a choice of $\theta$ ensures that
$$\left\|\langle \cdot \rangle^{|\g+\alpha|}G_{\beta+\alpha}\right\|_{L^{q_{\alpha}}}^{\frac{s}{s-\nu_{\alpha}}} \leq \left\|\langle \cdot \rangle^{\bm{a}}g\right\|_{L^1}^{\frac{s(1-\theta)}{s-\nu}}\,\left\|\langle \cdot \rangle^{|\g|}g\right\|_{L^q}^{\frac{s\theta}{s-\nu}} \leq (1-\theta)\left\|\langle \cdot \rangle^{\bm{a}}g\right\|_{L^1}^{\frac{s}{s-\nu}}+\theta\left\|\langle \cdot \rangle^{|\g|}g\right\|_{L^q}^{\frac{s}{s-\nu}}$$
thanks to Young's inequality. Notice that \eqref{eq:1qalp} yields, with such choice of $\theta$, 
$$\bm{a}=\frac{|\g+2\nu|}{\alpha}\beta+|\g|.$$ The optimal choice of $\alpha$ corresponds to $\frac{|\g+2\nu|}{\alpha}$ arbitrarily close to $1$ which justifies the choice of arbitrary $\bm{a} > \beta+|\g|$.  This choice is independent of both $\g$ and $\nu$, and so is $q$.
\end{proof} 
 
 \subsection{Appearance and propagation of weighted $L^{p}$-estimates} Gathering the results of the two previous sections, we deduce the following theorem.
\begin{theo}\label{mpsweight}
 Let $f_{\rm in} \in \mathcal{Y}_{\rm in}$ be given and let $f=f(t,v)$ be a classical  solution to Eq. \eqref{eq:Boltz} with 
 $$B(u,\sigma)=|u|^{\g}b(\cos \theta)\,,$$
where $b(\cdot)$ satisfies \eqref{eq:b0s} and $\g+2s <0.$  Given $p \in (1,\infty)$ and $k \geq0$ assume that
\begin{equation}\label{eq:ProSer}\langle \cdot \rangle^{|\g|}\,f \in L^{r}\left([0,T]\,;\,L^{q}(\R^{d})\right) \:\: \text{ with }\:\: \frac{2s}{r}+\frac{d}{q}=2s+d+\g\,,\end{equation}
for some $T >0$, where $r \in (1,\infty)$, $q \in \left(\frac{d}{2s+d+\g},\infty \right)$. 
Then, the following statements hold.
\begin{enumerate}
\item [(a)] \textnormal{\textbf{(Propagation of weighted Lebesgue norm)}} If 
$$f_{\rm in} \in L^{p}_{k}(\R^{d}) \cap L^{1}_{\ell}(\R^{d}) \qquad \text{ for }\qquad \ell > 2\frac{k}{p}+|\g|$$
then
\begin{equation}\label{eq:propaLp}
\left\|f(t)\right\|_{L^{p}_{k}}^{p} + \int_{0}^{t} \left\|\langle \cdot \rangle^{\frac{k+\g}{2}}f(\tau)^{\frac{p}{2}}\right\|_{\dot{\mathbb{H}}^{s}}^{2}\d\tau\leq \bm{C}\left(T,\lm_{{\ell}}(f_{\rm in})\right) \|f_{\rm in}\|_{L^{p}_{k}}^{p}\end{equation}
for a explicit constant $\bm{C}\left(T,\lm_{\bm{a}}(f_{\rm in})\right)$ depending on $\ds\int_{0}^{T}\left\|\langle \cdot \rangle^{|\g| }f(\tau)\right\|_{L^q}^{r}\d\tau$ as well as $T$, $k$, $p$, $q$, $s$, $\g$, $d$, $\lm_{\ell}(f_{\rm in})$, but not on $\|f_{\rm in}\|_{L^{p}_{k}}$.
\item [(b)] \textnormal{\textbf{(Appearance of weighted Lebesgue norm)}} For $f_{\rm in}$ with sufficient statistical moments,
$$f_{\rm in} \in L^{1}_{\eta_{p,k}}(\R^{d}), \qquad \eta_{p,k}:=\frac{|\g|d}{2s}\left(1-\frac{1}{p}\right) +\frac{k}{p}\,,$$
then, the solution $f=f(t,v)$ satisfies the estimate, for $t \in (0,T]$,
\begin{equation}\label{Lp-Kpkq}
\|f(t)\|_{L^{p}_{k}} \le \bm{K}_{p,B,q,T} \, t^{- \frac{d}{2s} \left(1 - \frac1{p} \right)}\sup_{\tau \in [0,T]}\lm_{\eta_{p,k}}(\tau),\end{equation}
for a explicit constant $\bm{K}_{p,B,q,T}$ depending on $\ds\int_{0}^{T}\|\langle \cdot\rangle^{|\g|}f(t)\|_{L^q}^{r}  \d t$ as well as $p,q,s,\g,d,$ $\varrho_{\rm in},E_{\rm in},H(f_{\rm in})$.
\end{enumerate}
\end{theo}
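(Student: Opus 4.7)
The plan is to mimic the proof of Theorem \ref{mps_g}, adapted to the weighted framework by means of the decomposition $\frac{1}{p}\frac{d}{dt}\lM_{k,p}(t)=\mathscr{J}_1[f(t)]+\mathscr{J}_2[f(t)]+\mathscr{J}_3[f(t)]$ together with Lemmas \ref{lem:J1}--\ref{lem:J3}. The first step is to collect these three estimates and invoke the coercivity estimate (Lemma \ref{lem:coerc}) applied to $G=f(t,\cdot)$ and $F^{p/2}$ with $F=\la\cdot\ra^{k/p}f$, which converts $\mathscr{D}[f(t),F^{p/2}(t)]$ into $c_0\,\lD_{s,k+\g,p}(t)-C_0\,\lM_{k+\g,p}(t)$. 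Choosing $\delta>0$ sufficiently small in Lemma \ref{lem:J3} to absorb the $\delta\,\mathscr{D}$-defect into the positive coercive contribution, I expect an inequality of the form
\begin{equation*}
\frac{d}{dt}\lM_{k,p}(t) + c_p\,\lD_{s,k+\g,p}(t) \le \bm{C}\,\mathcal{I}[f(t)] + C_p\,\lM_{k+\g,p}(t),
\end{equation*}
where $\mathcal{I}[f(t)]$ gathers the three singular contributions $\int \bm{c}_\g[f]\,F^p\,dv$, $\int \bm{c}_{\g+2s}[\la\cdot\ra^{k/p-2s}f]\,F^p\,dv$, and $\int \bm{c}_{\g+2s}[\la\cdot\ra^{2k/p-2s}f]\,F^p\,dv$.

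The second step is to control $\mathcal{I}[f(t)]$ by combining the unweighted fractional $\e$-Poincar\'e inequality of Proposition \ref{prop:ePo} (applied with $\phi=F^{p/2}$ and $g=f(t)$) for the $\bm{c}_\g$-term, with the weighted fractional $\e$-Poincar\'e inequality of Proposition \ref{prop:ePo-weight} (applied with $\beta=k/p-2s$ or $2k/p-2s$, and $\bm{a}$ chosen slightly larger than $\beta+|\g|$, still below the admissible $\ell$) for the two $\bm{c}_{\g+2s}$-terms. Picking $\e$ small enough to absorb all $\|\la\cdot\ra^{\g/2}F^{p/2}\|_{\dot{\mathbb{H}}^s}^2$ contributions into $\tfrac{c_p}{2}\lD_{s,k+\g,p}(t)$, one arrives at a closed differential inequality
\begin{equation*}
\frac{d}{dt}\lM_{k,p}(t) + \tfrac{c_p}{2}\,\lD_{s,k+\g,p}(t) \le \bm{\Lambda}(t)\,\lM_{k,p}(t),
\end{equation*}
with $\bm{\Lambda}(t)=\bm{C}\bigl(1+\lm_{\bm{a}}(f(t))^{r}+\|\la\cdot\ra^{|\g|}f(t)\|_{L^q}^{r}\bigr)$. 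The Prodi-Serrin assumption \eqref{eq:ProSer} together with the classical propagation of $L^1$-moments (available in the regime $\g>\max(-4,-d)$, see \cite{carlen}, provided $\ell>2k/p+|\g|$) ensures $\bm{\Lambda}\in L^1([0,T])$, and Gronwall's inequality yields \eqref{eq:propaLp}, together with the integrated control on $\lD_{s,k+\g,p}$ by integrating the inequality in time.

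For part (b), I would follow the same strategy as for Theorem \ref{mps_g}: combine the Sobolev embedding $\dot{\mathbb{H}}^s\hookrightarrow L^{2d/(d-2s)}$ with a weighted H\"older interpolation
\begin{equation*}
\lM_{k,p}(t) \le \lm_{\eta_{p,k}}(t)^{p\theta}\,\bigl\|\la\cdot\ra^{(k+\g)/p}f(t)\bigr\|_{L^{pd/(d-2s)}}^{p(1-\theta)}, \qquad \theta=\frac{2s}{d(p-1)+2s},
\end{equation*}
whose exponent balance uniquely prescribes the weight $\eta_{p,k}=\frac{|\g|d}{2s}\bigl(1-\frac{1}{p}\bigr)+\frac{k}{p}$. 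This yields the lower bound $\lD_{s,k+\g,p}(t)\gtrsim \lm_{\eta_{p,k}}(t)^{-\frac{2sp}{d(p-1)}}\lM_{k,p}(t)^{1+\frac{2s}{d(p-1)}}$, which inserted into the differential inequality and integrated in Bernoulli form produces the $t^{-d(p-1)/(2sp)}$ decay of \eqref{Lp-Kpkq}.

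The main obstacle is twofold. First, verifying that the $L^1_{\bm{a}}$-moment required by Proposition \ref{prop:ePo-weight} is uniformly propagated on $[0,T]$; this is precisely where the extra assumption $\ell>2k/p+|\g|$ on the initial datum enters, and it must be coupled with a moment-propagation estimate appropriate for very soft potentials. Second, reconciling the fact that Lemmas \ref{lem:J2}--\ref{lem:J3} produce $\bm{c}_{\g+2s}$-terms whereas Proposition \ref{prop:ePo-weight} is stated for $\bm{c}_{\g+2}$: since $\g+2s\le\g+2$, the argument of that proposition applies mutatis mutandis (the parameter $\alpha$ is simply picked in $(0,2s)$ rather than $(0,2)$), but one has to track carefully that the resulting exponent $q_\alpha$ can be aligned with the unweighted $q$ from \eqref{eq:ProSer}, which is what allows a single unified Prodi--Serrin criterion to suffice for all $k\ge 0$.
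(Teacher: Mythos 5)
Your proposal reproduces the paper's proof essentially verbatim: the three-term decomposition via Lemmas \ref{lem:J1}--\ref{lem:J3}, the coercivity Lemma \ref{lem:coerc} applied to $F^{p/2}$, the combination of Propositions \ref{prop:ePo} and \ref{prop:ePo-weight} (with $\beta = 2k/p$, $\bm{a}=\ell$) to absorb the singular $\bm{c}$-terms into a Gronwall inequality, and the same Sobolev/interpolation argument with $\eta_{p,k}=\eta_p+\tfrac{k}{p}$ for part (b). Your observation about reconciling the $\bm{c}_{\g+2s}$-terms of Lemmas \ref{lem:J2}--\ref{lem:J3} with the $\bm{c}_{\g+2}$ of Proposition \ref{prop:ePo-weight} is correct and well-spotted (the paper actually resolves it by invoking Lemma \ref{lem:cancel} with $\alpha=1$ so that $\bm{c}_{\g+2}$ appears directly in \eqref{eq:lMkpTT}, rather than by restricting the auxiliary parameter of Proposition \ref{prop:ePo-weight} to $(0,2s)$ as you suggest, but the two routes are equivalent).
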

 
\begin{proof} Recalling that $\lM_{k,p}(t)=\|f(t)\|_{L^{p}_{k}}^{p}$ and
$$\frac{1}{p}\dfrac{\d}{\d t}\lM_{k,p}(t)=\mathscr{J}_{1}[f(t)]+\mathscr{J}_{2}[f(t)]+\mathscr{J}_{3}[f(t)],$$
we deduce from \eqref{eq:J1}--\eqref{eq:J2}--\eqref{eq:J3} that, for any $\delta >0$,
\begin{multline*}
\frac{1}{p}\dfrac{\d}{\d t}\lM_{k,p}(t) +\frac{1}{\max(p,p')}\mathscr{D}\left[f(t),F^{\frac{p}{2}}(t) \right] \leq \frac{1}{p'}\int_{\R^{d}}\bm{c}_{\g}[f(t)](v)F^{p}(t,v)\d v \\
+ c_{k,p}(B)  \int_{\R^{d}}\bm{c}_{\g+2s} \left[\langle \cdot\rangle^{\frac{k}{p}-2s}f(t) \right]  F^{p}(t,v)\d v 
+\delta\mathscr{D} \left[f(t),F^{\frac{p}{2}}(t) \right] \\
+C_{\delta}(B)\int_{\R^{d}} \bm{c}_{\g+2s}\left[\langle \cdot \rangle^{2\frac{k}{p}-2s}f(t) \right](v)F^{p}(t,v) \d v,\end{multline*}
where $F(t,v)=\langle v\rangle^{\frac{k}{p}}f(t,v)$. Choose $\delta=\delta_{0}=\frac{1}{2\max(p,p')}$.  Since $\frac{2k}{p}-2s > \frac{k}{p}-2s$, notice that there exists $\bm{C}_{k,p} >0$ depending on $p,k,B$ such that
\begin{multline*}
\dfrac{\d}{\d t}\lM_{k,p}(t) +\frac{p}{2\max(p,p')}\mathscr{D} \left[f(t),F^{\frac{p}{2}}(t) \right]\leq (p-1)\int_{\R^{d}}\bm{c}_{\g}[f(t)](v)F^{p}(t,v)\d v\\
+\bm{C}_{k,p}\int_{\R^{d}} F^{p}(t,v)\bm{c}_{\g+2s}\left[\langle \cdot \rangle^{2\frac{k}{p}-2s}f(t) \right](v) \d v.\end{multline*}
Recalling that
$$\lD_{s,\g+k,p}(t)=\left\|\langle \cdot \rangle^{\frac{\g}{2}}F^{\frac{p}{2}}(t)\right\|_{\dot{\mathbb{H}}^{s}}^{2} \qquad \text{and}\qquad F=\langle \cdot \rangle^{\frac{k}{p}}f,$$
we deduce from  Lemma \ref{lem:coerc} that there exists $c_{p},C_{p}>0$ such that
$$\frac{p}{2\max(p,p')}\mathscr{D} \left[f(t),F^{\frac{p}{2}}(t) \right] \geq c_p\lD_{s,\g+k,p}(t)-C_p \lM_{k+\g,p}(t).$$
Therefore,
\begin{multline}\label{eq:lMkpTT}
\dfrac{\d}{\d t}\lM_{k,p}(t) +c_{p}\lD_{s,k+\g,p}(t)\leq C_{p}\lM_{k+\g,p}(t)+(p-1)\int_{\R^{d}}\bm{c}_{\g}[f(t)](v)F^{p}(t,v)\d v\\
+\bm{C}_{k,p}\int_{\R^{d}} F^{p}(t,v)\bm{c}_{\g+2{s}}\left[\langle \cdot \rangle^{2\frac{k}{p}-2}f(t) \right](v) \d v.
\end{multline}
The integral involving $\bm{c}_{\g}$ is estimated using \eqref{eq:estimatc} with the choices $\phi=F^{\frac{p}{2}}$ and $g= f $, so that, for all $\e >0$ there is a constant $C_{\e} >0$ such that
$$\int_{\R^{d}}\bm{c}_{\g}[f(t)](v)F^{p}(t,v)\d v\leq \e\lD_{s,\g+k,p}(t)+ C_{\e}\left(\|f(t)\|_{L^1}+\left\|\langle \cdot \rangle^{|\g|}f(t)\right\|_{L^q}^{r}\right)\lM_{k+\g,p}(t),$$ 
where we recall $r=\frac{s}{s-\nu}$, as in the proof of Theorem \ref{mps_g}.  Similarly, we can use Proposition \ref{prop:ePo-weight} with $\beta=2\frac{k}{p}$ and $\bm{a}=\ell >2\frac{k}{p}+|\g|$, with the same choice of $q$, $r$, to deduce that for all $\e >0$, there is a constant $C_{\e} >0$ such that
\begin{multline*}
\int_{\R^{d}}F^{p}(t,v)\bm{c}_{\g+2\textcolor{blue}{s}} \left[\langle \cdot\rangle^{2\frac{k}{p}}f(t) \right](v) \d v
\leq \e\lD_{s,\g+k,p}(t)+C_{\e}\left(\left\|\langle \cdot \rangle^{|\g|}f(t)\right\|_{L^q}^{r}+\lm_{\ell}(t)\right)\lM_{k,p}(t).\end{multline*}
Choosing $\e >0$ sufficiency small, we deduce that
$$\dfrac{\d}{\d t}\lM_{k,p}(t) +\frac{c_{p}}{2}\lD_{s,k+\g,p}(t) \leq \bm{\Lambda}(t)\lM_{k,p}(t),$$
with
$$\bm{\Lambda}(t)=\bm{C}_{1}\left(1+\left\|\langle \cdot \rangle^{|\g|}f(t)\right\|_{L^q}^{\frac{s}{s-\nu}}+\lm_{\ell}(t)\right).$$
Recall that under assumption $f_{\rm in} \in L^{1}_{\ell}(\R^{d})$ one has that $\sup_{t\in[0,T]}\lm_{\ell}(t) \leq C_{T}$ and $\bm{\Lambda} \in L^{1}(0,T)$ under the Prodi-Serrin condition \eqref{eq:ProSer}. Setting
$$\bm{C}\left(T,\lm_{\ell}(f_{\rm in})\right)=\exp\left\{\int_{0}^{T}\bm{\Lambda}(\tau)\d\tau\right\},$$
we deduces the propagation result. In order to prove the appearance of $L^{p}_{k}$-norm, copycat the proof of Theorem \ref{mps_g} where all estimates are computed for $F(t,v)=\la \cdot \ra^{\frac{k}{p}}f(t,v)$ instead of $f(t,v)$. The estimates in the proof of Theorem \ref{mps_g} show then
$$\lD_{s,k+\g,p}(t) \geq C_{\mathrm{Sob},s}^{-2}\|F(t)\|_{L^{1}_{\eta_{p}}}^{-\frac{2sp}{d(p-1)}}\lM_{k,p}^{1+\frac{2s}{d(p-1)}}=C_{\mathrm{Sob},s}^{-2}\lm_{\eta_{p}+\frac{k}{p}}(t)^{-\frac{2sp}{d(p-1)}}\lM_{p}(t)^{1+\frac{2s}{d(p-1)}}.$$
This proves the result as in Theorem \ref{mps_g}, since $\eta_{k,p}=\eta_{p}+\frac{k}{p}$.  
\end{proof}

\section{Stability and uniqueness of solutions}\label{sec:unique}
In this section we work in the physical dimension $d=3$.
\subsection{Useful estimates} To prove the stability of solutions to the Boltzmann equation, we first derive suitable estimates for $\Q$ and establish an alternative to Proposition \ref{prop:ePo} in order to estimate $\bm{c}_\gamma[f]$. We begin with the following lemma.
 \begin{lem}
 Consider $\alpha \in(-3, 1]$, $s \in (0, 1)$, and $-\frac{3}{2} < \alpha + 2s < 0$.  For any $\ell > \frac{3}{2} + (\alpha + 2 s)$ and $\beta \in \R$, we have that
 	\begin{equation}
 		\label{eq:conv_soft}
 		\int_{ \R^3 } \bm{c}_\alpha [\chi] \psi^2 \d v \leq  C_{\beta,\ell}\| \la \cdot \ra^{|\alpha| + \beta + \ell} \chi \|_{L^2} \left(  \|\langle \cdot\rangle^{\frac{\alpha - \beta}{2}} \psi \|^2_{ \mathbb{H}^s} + \|\langle \cdot\rangle^{\frac{\alpha}{2}} \psi \|^2_{ L^2 }  \right)\,,
 	\end{equation}
for an explicit constant $C_{\beta,\ell} >0$ depending on $d,\alpha,s,\beta$ and $\ell$. Furthermore, if $\alpha + 2 s > 0$ we obtain that
 	\begin{equation}
 		\label{eq:conv_mod_soft}
 		\int_{ \R^3 } \bm{c}_\alpha [\chi] \psi^2 \d v \leq C_{\beta,\ell}\| \la \cdot \ra^{|\alpha| + \beta} \chi \|_{L^1} \left(  \|\langle \cdot\rangle^{\frac{\alpha - \beta}{2}} \psi \|^2_{ \mathbb{H}^s} + \|\langle \cdot\rangle^{\frac{\alpha}{2}} \psi \|^2_{ L^2 }  \right) \, .
 	\end{equation}
\end{lem}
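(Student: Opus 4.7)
The plan is to estimate $\int_{\R^3}\bm{c}_\alpha[\chi]\psi^2\d v$ by the classical decomposition of the kernel $|v-v_*|^\alpha$ into a near-field and a far-field piece, in the spirit of the proof of Proposition \ref{prop:ePo}: the singular piece near $v=v_*$ will be controlled by the Hardy--Littlewood--Sobolev inequality combined with the Sobolev embedding $\dot{\mathbb{H}}^s(\R^3)\hookrightarrow L^{6/(3-2s)}(\R^3)$, while the far-field piece will be handled by moments of $\chi$. Throughout, Peetre-type inequalities $\langle v\rangle^{t}\leq C\langle v-v_*\rangle^{|t|}\langle v_*\rangle^{t}$ will serve to transfer weights between the variables $v$ and $v_*$. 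The substitution $\Psi := \langle\cdot\rangle^{(\alpha-\beta)/2}\psi$, which gives the $\|\Psi\|_{\mathbb{H}^s}$-term on the right-hand side of the target estimate, is used to re-package the $\psi$-norm after the HLS step.

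Concretely, I would split the double integral as $\int\int_{|v-v_*|\geq 1}+\int\int_{|v-v_*|<1}$. On the far-field $\{|v-v_*|\geq 1\}$ one has $|v-v_*|^\alpha\leq C\,\langle v\rangle^\alpha\langle v_*\rangle^{|\alpha|}$ (derived from Peetre since $\alpha\leq 0$), so the far-field contribution is bounded by $\|\langle\cdot\rangle^{|\alpha|}\chi\|_{L^1}\,\|\langle\cdot\rangle^{\alpha/2}\psi\|_{L^2}^2$; a Cauchy--Schwarz with the weight $\langle\cdot\rangle^{-(\beta+\ell)}$ (whose $L^2$-norm is finite for $\beta+\ell>3/2$, encoded in the constant $C_{\beta,\ell}$) turns $\|\langle\cdot\rangle^{|\alpha|}\chi\|_{L^1}$ into the weighted $L^2$-norm $\|\langle\cdot\rangle^{|\alpha|+\beta+\ell}\chi\|_{L^2}$ of the statement. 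On the near-field $\{|v-v_*|<1\}$, where $\langle v\rangle\simeq \langle v_*\rangle$, I would apply HLS with exponents $(p_1,p_2)$ satisfying $\tfrac{1}{p_1}+\tfrac{1}{p_2}+\tfrac{|\alpha|}{3}=2$; the choice $p_2=\tfrac{3}{3-2s}$ places $\Psi^2$ in $L^{p_2}$ via Sobolev embedding and produces the $\|\Psi\|_{\mathbb{H}^s}^2$-term, while the conjugate exponent $p_1=\tfrac{3}{3+\alpha+2s}\in(1,2)$ lies in the valid range precisely under $-3/2<\alpha+2s<0$. A Hölder interpolation then gives $\|\langle\cdot\rangle^{\beta-\alpha}\chi\|_{L^{p_1}}\lesssim \|\langle\cdot\rangle^{|\alpha|+\beta+\ell}\chi\|_{L^2}$ whenever $\ell>\tfrac{3}{2}+(\alpha+2s)$, which is exactly the hypothesis.

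The second estimate ($\alpha+2s>0$) follows the same architecture, but the mildly singular kernel allows one to bypass HLS and control the near-field via interpolation: one uses $\dot{\mathbb{H}}^\nu\hookrightarrow L^{6/(3+\alpha)}$ with $\nu=-\alpha/2\in(0,s)$ together with $\|\psi\|_{\dot{\mathbb{H}}^\nu}\leq \|\psi\|_{L^2}^{1-\nu/s}\|\psi\|_{\dot{\mathbb{H}}^s}^{\nu/s}$ and Young's inequality, which keeps only the $L^1$-norm of $\chi$ and yields \eqref{eq:conv_mod_soft}. The main obstacle in both estimates will be the clean tracking of weights through the Peetre transfer and the HLS/Hölder steps; the sharp threshold $\ell>3/2+(\alpha+2s)$ is dictated precisely by the Hölder dual exponent at the near-field HLS step and matches the hypothesis exactly, while the arbitrariness of $\beta\in\R$ forces us to absorb the $\beta$-dependence entirely into the constant $C_{\beta,\ell}$.
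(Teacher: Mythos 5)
Your argument for \eqref{eq:conv_soft} follows the paper's route essentially line for line: near/far split of the kernel, Peetre to transfer the weight on the far field, Hardy--Littlewood--Sobolev on the near field with exponents $p_1=\tfrac{3}{3+\alpha+2s}$ and $p_2=\tfrac{3}{3-2s}$, the Sobolev embedding $\dot{\mathbb{H}}^s(\R^3)\hookrightarrow L^{6/(3-2s)}$ for $\Psi$, and the weighted embedding $L^2_{2\ell}\hookrightarrow L^{p_1}$ under $\ell>\tfrac32+(\alpha+2s)$. This part is fine, and it is the same proof.

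Your argument for \eqref{eq:conv_mod_soft}, however, has a genuine gap. You propose $\nu=-\alpha/2$ so that, via $\dot{\mathbb{H}}^\nu\hookrightarrow L^{6/(3+\alpha)}$, one gets $\Psi^2\in L^{3/(3+\alpha)}$. But the H\"older-conjugate exponent for the near-field kernel is then $\bigl(\tfrac{3}{3+\alpha}\bigr)'=\tfrac{3}{|\alpha|}$, and
\[
\left\| |\cdot|^{\alpha}\ind_{|\cdot|\le 1}\right\|_{L^{3/|\alpha|}}^{3/|\alpha|}=\int_{|z|\le 1}|z|^{-3}\,\d z=+\infty .
\]
The choice $\nu=-\alpha/2$ puts you exactly at the critical exponent where the kernel ceases to be integrable, so the near-field H\"older step fails. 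You would need a strictly larger $\nu$, any $\nu\in(-\alpha/2,s]$; the cleanest choice is $\nu=s$, which moreover makes the interpolation $\|\Psi\|_{\dot{\mathbb{H}}^\nu}\le\|\Psi\|_{L^2}^{1-\nu/s}\|\Psi\|_{\dot{\mathbb{H}}^s}^{\nu/s}$ and the subsequent Young's inequality entirely unnecessary. That is what the paper does: it pairs the kernel in $L^{3/(2s)}$ (finite precisely because $\alpha+2s>0$ gives $|\alpha|<2s$) against $\Psi^2\in L^{3/(3-2s)}$, i.e.\ $\Psi\in L^{6/(3-2s)}$, controlled directly by $\mathbb{H}^s\hookrightarrow L^{6/(3-2s)}$.

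A smaller remark: when converting the far-field term $\|\la\cdot\ra^{|\alpha|}\chi\|_{L^1}$ into $\|\la\cdot\ra^{|\alpha|+\beta+\ell}\chi\|_{L^2}$ by Cauchy--Schwarz, the weight $\la\cdot\ra^{-(\beta+\ell)}$ is only in $L^2(\R^3)$ when $\beta+\ell>\tfrac32$; saying this is ``encoded in the constant $C_{\beta,\ell}$'' does not cover the case $\beta+\ell\le\tfrac32$, where no finite constant exists. The lemma as stated allows arbitrary $\beta\in\R$, so strictly speaking an additional restriction (automatic in the paper's applications, where $\beta=0$ and $\ell$ is large) is needed here; the paper's own write-up glosses over the same point, so this is not a defect of your argument relative to theirs, but it should not be dismissed as purely a matter of the constant.
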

\begin{proof} Writing
$$\int_{\R^{3}}\bm{c}_{\alpha}[\chi]\psi^{2}\d v=\int_{\R^{3}}\chi(v)\d v\int_{\R^{3}}|v-\vet|^{\alpha}\psi^{2}(\vet)\d\vet\,,$$
we can divide the internal integral in small and large relative velocities as
$$\int_{ \R^3 } |v-\vet |^{\alpha} \psi^2(\vet) \d \vet =\int_{ |v-\vet| \le 1 } |v-\vet |^{\alpha} \psi^{2}(\vet) \d\vet + \int_{ |v-\vet| \ge 1 } |v-\vet |^{\alpha} \psi^{2}(\vet) \d \vet\,.$$
Using then Peetre's inequality in the form $1 \lesssim \langle v\rangle^{|\alpha|}\langle \vet\rangle^{\alpha}\langle v-\vet\rangle^{\alpha}$, we deduce that
\begin{equation*}\begin{split}
\int_{ \R^3 } |v-\vet |^{\alpha} \psi^2(\vet) \d \vet &\lesssim \la v \ra^{|\alpha|} \left( \int_{ |v-\vet| \ge 1 } \la \vet \ra^{\alpha} \psi^2(\vet) \d \vet
 		+ \int_{ |v-\vet| \le 1 } |v-\vet |^{\alpha} \la \vet \ra^{\alpha} \psi^{2}(\vet) \d \vet\right) \\
 		\lesssim & \la v \ra^{|\alpha|} \left( \|\langle \cdot \rangle^{\frac{\alpha}{2}} \psi \|^2_{2}
 		+ \int_{ |v-\vet| \le 1 } |v-\vet |^{\alpha} \la \vet \ra^{\alpha} \psi^{2}(\vet) \d\vet \right) \, .
 	\end{split}\end{equation*}
Therefore, observing that $\la v \ra \approx \la \vet \rangle$ whenever $|v-v_*| \le 1$
$$\int_{\R^{3}}\bm{c}_{\g}[\chi]\psi^{2}\d v \lesssim \|\langle \cdot \rangle^{|\alpha|}\chi\|_{L^1} \|\langle \cdot \rangle^{\frac{\alpha}{2}} \psi \|^2_{2} + \int_{ |v-\vet| \le 1 } |v-\vet |^{\alpha} \la \vet \ra^{\alpha - \beta} \psi^{2}(\vet) \la v \ra^{|\alpha| + \beta} \chi(v) \d \vet\d v \, .$$ For $-\frac{3}{2}< \alpha+2s<0$, we  use the Hardy-Littlewood-Sobolev inequality with the choice $\frac{1}{p} = 1 + \frac{\alpha+2s}{3}$ and $\frac{1}{r} = 1-\frac{2s}{3}$ to deduce that
$$\int_{\R^{3}}\bm{c}_{\g}[\chi]\psi^{2}\d v \lesssim \|\langle \cdot \rangle^{|\alpha|}\chi\|_{L^1} \|\langle \cdot \rangle^{\frac{\alpha}{2}} \psi \|^2_{L^{2}} + \| \la \cdot \ra^{|\alpha| + \beta} \chi \|_{L^p} \| \la \cdot \ra^{\frac{\alpha - \beta}{2}} \psi \|_{L^{2r} }^2\,.$$
Observing that $p <2$ and choosing $\l > \frac{3}{2}+\alpha+2s$ one has that $L^2_{2 \ell}(\R^{3}) \hookrightarrow L^p(\R^{3})$.   We also recall the embedding $\mathbb{H}^s \hookrightarrow L^{ 2 r }(\R^{3})$, see \cite[Theorem 1.66]{chemin}, to obtain  \eqref{eq:conv_soft}.  In the case of moderately soft potentials, that is $\alpha + 2s > 0$, the result is obvious if $\alpha \ge 0$, so we focus in the case $\alpha < 0$.  Using H\"older's inequality
\begin{multline*}
\int_{ |v-\vet| \le 1 } |v-\vet |^{\alpha} \la \vet \ra^{\alpha - \beta} \psi^{2}(\vet) \la v \ra^{|\alpha| + \beta} \chi(v) \d \vet\d v\\
\leq \| \la \cdot \ra^{|\alpha| + \beta} \chi \|_{L^1} \left\| | \cdot |^{-\alpha}\ind_{|\cdot| \leq 1} \right\|_{L^{\frac{3}{2s}}} \| \la \cdot \ra^{\frac{\alpha - \beta}{2}} \psi \|_{L^{2r} }^2\, ,
\end{multline*}
with $\frac{1}{r}=1-\frac{2s}{3}$ and thus, once again, we deduce \eqref{eq:conv_mod_soft}. This concludes the proof.
\end{proof}
Let us revisit and extend estimates in \cite[Proposition 3.1]{AMSY} to the soft potential case. The proof of the result is postponed to Appendix \ref{app:AMSY}. 
\begin{prop}\label{prop:AMSY} Given $0 < s < 1$, $\g+2s <0$ and $\l >\max\left(6,\frac{9+\g}{2}+2s\right)$, there exists a constant $\kappa_{\l,b} >0$ depending only on $\l$ and $b(\cdot)$ such that
\begin{multline*}
\int_{\R^{6}\times \S^{2}}b(\cos\theta)|v-\vet|^{\g}\left(\langle v'\rangle^{\l}-\langle v\rangle^{\l}\right)f_{\ast} g\,h'\d v\d\vet\d\sigma\\
\leq 
 \kappa_{\l,b}\,\left(\int_{\R^{3}}\bm{c}_{\g}\left[\langle \cdot \rangle\,|g|\right](v)\langle v\rangle^{2\l}f^{2}(v)\d v\right)^{\frac{1}{2}}
\left(\int_{\R^{3}}\bm{c}_{\g}\left[\langle \cdot \rangle\,|g|\right](v)h^{2}(v)\d v\right)^{\frac{1}{2}}\\
+\kappa_{\l,b}\left(\int_{\R^{3}}\bm{c}_{\g}\left[\langle \cdot \rangle^{4}\,|f|\right](v)\langle v\rangle^{2\l}g^{2}(v)\d v\right)^{\frac{1}{2}}
\left(\int_{\R^{3}}\bm{c}_{\g}\left[\langle \cdot \rangle^{4}\,|f|\right](v)h^{2}(v)\d v\right)^{\frac{1}{2}}\\
+ \kappa_{\l,b}\sqrt{\mathscr{D}_{\g+2}\left[\langle \cdot \rangle|f|\,,\,\langle \cdot \rangle^{\l-2}g\right]}\left(\int_{\R^{3}}\bm{c}_{\g}[\langle \cdot \rangle|f|]h^{2}\d v\right)^{\frac{1}{2}}\,,\end{multline*}
for all smooth $f,g,h$ where $\mathscr{D}_{\g+2}$ is defined in Remark \ref{rmq:Dgamma}.
\end{prop}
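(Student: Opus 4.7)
The plan is to follow the commutator strategy of \cite[Prop. 4.1]{AMSY}, adapted to the very soft potentials range $\gamma+2s<0$. Start from the Taylor expansion
\begin{equation*}
\langle v'\rangle^\ell - \langle v\rangle^\ell = \ell \langle v\rangle^{\ell-2}\, v\cdot (v'-v) + \mathcal{R}_\ell(v,v'),
\end{equation*}
whose remainder satisfies $|\mathcal{R}_\ell(v,v')| \lesssim |v'-v|^{2}\bigl(\langle v\rangle^{\ell-2}+\langle v'\rangle^{\ell-2}\bigr)$. Recalling $v'-v = -u^-$, one has $|v'-v|^2 = |u|^2\sin^2(\theta/2)$. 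Denote by $I_1$ and $I_2$ the trilinear pieces obtained from the linear and remainder parts, respectively.

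For $I_2$, the extra factor $|v'-v|^2$ turns the kernel $|u|^\gamma b(\cos\theta)$ into $|u|^{\gamma+2}\,b(\cos\theta)\sin^2(\theta/2)$, which effectively lives at the softer scale $\gamma+2$. The strategy is to factor $|v'-v|^2 = |v'-v|\cdot|v'-v|$ and $b=b^{1/2}\cdot b^{1/2}$, and apply Cauchy--Schwarz so that one factor carries the finite-difference structure $\langle v'\rangle^{\ell-2} g(v') - \langle v\rangle^{\ell-2} g(v)$ required by $\mathscr{D}_{\gamma+2}[\langle\cdot\rangle|f|,\langle\cdot\rangle^{\ell-2} g]$, while the other factor, after the regular pre-post change of variables that swaps $v'$ and $v$ on the $h'$-factor, yields $\sqrt{\int \bm{c}_\gamma[\langle\cdot\rangle|f|]\, h^2\,\d v}$. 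Lower-order leftovers generated in this reorganisation feed into the first two terms on the right-hand side.

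For $I_1$, decompose $v\cdot(v'-v) = -\tfrac{1}{2}\,v\cdot u + \tfrac{1}{2}|u|\,v\cdot\sigma$, factor the kernel symmetrically as $b^{1/2}|u|^{\gamma/2}\cdot b^{1/2}|u|^{\gamma/2}$, and distribute the weight $\ell\langle v\rangle^{\ell-2} v$ across either $f_*$ or $g$ via Peetre's inequality $\langle v\rangle \lesssim \langle v_*\rangle\langle v-v_*\rangle$. The two allocations produce, after Cauchy--Schwarz and a regular change of variables on the $h'$-factor, the Term~1 contribution involving $\bm{c}_\gamma[\langle\cdot\rangle|g|]$ and the Term~2 contribution involving $\bm{c}_\gamma[\langle\cdot\rangle^4|f|]$; the upgrade to $\langle\cdot\rangle^4$ reflects the extra Peetre factors needed when transferring weights between $v$, $v_*$ and $v'$ while keeping all integrands non-negative.

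The main obstacle is the very soft range $\gamma+2s<0$, where the standard AMSY scheme landing on $\mathscr{D}_\gamma$ is insufficient because $\mathscr{D}_\gamma$ no longer comfortably dominates $\bm{c}_\gamma$ via Lemma~\ref{lem:coerc}. The key new idea is the reorganisation above, which lands on $\mathscr{D}_{\gamma+2}$ instead, a dissipation with genuinely better coercivity. The threshold $\ell > \max\{6,(9+\gamma)/2+2s\}$ is dictated by the weighted $L^{1}$- and $L^{2}$-norms that must be absorbed on the right-hand side when estimating the $\bm{c}_\gamma[\langle\cdot\rangle^4 f]$- and $\bm{c}_\gamma[\langle\cdot\rangle g]$-type contributions via \eqref{eq:conv_soft}; the value $(9+\gamma)/2+2s$ comes directly from the threshold $\tfrac{3}{2}+(\alpha+2s)$ of that lemma with $\alpha = \gamma$, augmented by the units required for the weight $\ell-2$ that appears inside $\mathscr{D}_{\gamma+2}$.
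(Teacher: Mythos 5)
Your two-piece Taylor decomposition is in the spirit of the paper (which uses the finer six-term expansion of \cite[Lemma 2.5]{AMSY}), but the assignment of roles is reversed and a crucial cancellation is missing, so the argument does not close. In the paper, the dissipation term $\mathscr{D}_{\gamma+2}$ comes from the \emph{linear} piece of the expansion, via the AMSY identity showing that the contribution of $\ell\langle v\rangle^{\ell-2}|u|(v\cdot\omega)\cdots$ to the trilinear form can be rewritten with $g(v)\langle v\rangle^{\ell-2}$ replaced by the finite difference $G(v)-G(v')$, $G=\langle\cdot\rangle^{\ell-2}g$, thanks to the antisymmetry of $\tilde\omega=(v'-v)/|v'-v|$; this extra factor $\sin(\theta/2)$ gained from the finite difference is exactly what renders the Cauchy--Schwarz step admissible and lands on $\mathscr{D}_{\gamma+2}$. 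Without it, your $I_1$ cannot be closed for $s\geq\tfrac12$: the best pointwise bound is $|v\cdot(v'-v)|\lesssim\langle v\rangle|u|\sin(\theta/2)$, leading to the angular integral $\int_{\S^2}b(\cos\theta)\sin(\theta/2)\,\d\sigma\sim\int_0^\pi\theta^{-2s}\,\d\theta$, which diverges on the whole subrange $s\in[\tfrac12,1)$, and your further split $v\cdot(v'-v)=-\tfrac12 v\cdot u+\tfrac12|u|\,v\cdot\sigma$ combined with Peetre's inequality does not improve the $\theta$-singularity of either piece.

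Conversely, your $I_2$ cannot produce $\mathscr{D}_{\gamma+2}$ at all: in the trilinear form $f_*\,g\,h'$, the function $g$ is evaluated only at $v$ and the Taylor remainder $\mathcal{R}_\ell$ is a scalar multiplier, so no Cauchy--Schwarz reorganisation can manufacture a $g(v')$ to form the difference $\langle v'\rangle^{\ell-2}g(v')-\langle v\rangle^{\ell-2}g(v)$ appearing in the definition of $\mathscr{D}_{\gamma+2}$. What the remainder actually controls — refined in the paper into the explicit pieces $\Gamma_2,\dots,\Gamma_6$ of AMSY's Lemma~2.5 — are the two $\bm{c}_\gamma$-type terms on the right-hand side, via Cauchy--Schwarz and the singular/regular pre-post changes of variable. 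Finally, the threshold $\ell>\tfrac{9+\gamma}{2}+2s$ does not come from \eqref{eq:conv_soft} as you claim; it is dictated by the estimate of $\Gamma_3$ (the $\bm{r}_1$ piece), where the singular change of variables generates the Jacobian $\sin^{-3-\gamma}(\theta/2)$ and one must then have $\int_{\S^2}b(\cos\theta)\sin^{\ell-\frac{9+\gamma}{2}}(\theta/2)\,\d\sigma<\infty$, i.e.\ $\ell-\tfrac{9+\gamma}{2}>2s$.
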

A consequence of this inequality is the following commutator estimate.  Define,
\begin{equation}
\label{eq:commutator_implicit_def}
\mathcal{R}_{k}(f,g,\psi)=\la \Q(f, \la \cdot \ra^{ \frac{k}{2} } g), \la \cdot \ra^{ \frac{k}{2} } \psi  \ra_{L^2} - \la \Q(f, g) , \psi \ra_{L^{2}_k} \, ,\qquad k\geq0\,.
\end{equation}
\begin{prop}[\textbf{\textit{Commutator estimate}}]\label{cor:Comm}
Let $s \in (0, 1)$ and $- \frac{3}{2} < \gamma + 2 s < 0$ and $k > 11 + 4s$.  There exists a constant $C_{k} >0$, depending on $k,\g,s$, such that 
\begin{multline}
 	 	\label{eq:comm_general}
\Big|\mathcal{R}_{k}(f,g,\psi)\Big| \leq C_{k} \|f \|_{L^1_{\g+3+2s}}^{\frac{1}{2}} \| \langle \cdot\rangle^{\frac{\gamma+k}{2}}\,g \|_{ \mathbb{H}^s} \left(\int \bm{c}_\gamma [ \la \cdot \ra |f| ] \langle v\rangle^{k}\psi^2 \d v \right)^{\frac{1}{2}}
\\
+C_{k}	\underset{a\neq b}{\sum_{a,b \in \{|f|,|g|\}}}\,\left(\int_{\R^{3}}\bm{c}_{\g}\left[\langle \cdot \rangle^{4}\,a\right](v)\langle v\rangle^{k}b^{2}(v)\d v\right)^{\frac{1}{2}}
\left(\int_{\R^{3}}\bm{c}_{\g}\left[\langle \cdot \rangle^{4}\,a\right](v)\langle v\rangle^{k}\psi^{2}(v)\d v\right)^{\frac{1}{2}}\,.
\end{multline}
Furthermore, for any $\varepsilon >0$
\begin{multline*}
\Big| \mathcal{R}_k(f,\psi,\psi) \Big| + \Big| \mathcal{R}_{k-\g}(\psi, g, \la \cdot \ra^{\frac{\g}{2}} \psi) \Big| \\
\le \varepsilon\left(1+\|f\|_{L^{1}_{\g+3+2s}}^{\frac{1}{2}} + \| g \|_{L^2_k} \right)\left\|\langle \cdot\rangle^{\frac{\g+k}{2}}\psi\right\|_{\mathbb{H}^{s}}^{2}+\Big(\bm{\Lambda}_{\e}[f] + \bm{\Lambda}_{\e}[ g]\Big)\,\|\psi\|_{L^2_k}^{2}\,,
\end{multline*}
for any smooth $f$, $g$ and $\psi$.   Here, for any smooth function $\varphi$, 
\begin{equation}\label{eq:Lamb}
\bm{\Lambda}_{\e}[\varphi]=\bm{C}_{\e} \left(1+\|\varphi\|_{L^{1}_{\g+3+2s}}^{\frac{1}{2}}\right)
\left(\|\varphi\|_{L^{1}_{4+|\g|}}  + \left\|\la \cdot \ra^{\frac{k}{2}} \varphi \right\|_{{\mathbb{H}}^{s} }^{2} + \|\varphi\|_{L^{2}_{2( 4+|\g| ) }}^{\frac{s}{s-\nu}}\right)\,,\end{equation}
for some positive $\bm{C}_{\e} >0$ depending on $k,s,\g$ and $\nu=-\frac{3+2\g}{4}.$
\end{prop}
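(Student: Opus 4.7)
The plan is to recognise $\mathcal{R}_k$ as the commutator $[\Q(f,\,\cdot\,),\langle\cdot\rangle^{k/2}]$ tested against $\la\cdot\ra^{k/2}\psi$ and to identify it, via the weak form of $\Q$, with the exact integral estimated in Proposition \ref{prop:AMSY}; the two displayed bounds then follow through Young's inequality together with the fractional $\e$-Poincar\'e inequality. Concretely, I would first apply the pre-post collisional weak form
$$\la \Q(g_1,g_2),h\ra_{L^2}=\int_{\R^{6}\times\S^{2}}B(u,\sigma)\,g_1(v_*)g_2(v)\,[h(v')-h(v)]\,\d v\d v_*\d\sigma$$
to both inner products in \eqref{eq:commutator_implicit_def}. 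After the diagonal pieces $\la v\ra^{k}g(v)\psi(v)$ cancel, one obtains
$$\mathcal{R}_k(f,g,\psi)=-\int_{\R^{6}\times\S^{2}}|u|^{\g}b(\cos\theta)\,f(v_*)g(v)\la v'\ra^{k/2}\psi(v')\big[\la v'\ra^{k/2}-\la v\ra^{k/2}\big]\,\d v\d v_*\d\sigma,$$
which is precisely the integral bounded by Proposition \ref{prop:AMSY} with $\ell=k/2$ and test function $h=\la\cdot\ra^{k/2}\psi$. The hypothesis $k>11+4s$ ensures (since $\g<0$) that $k/2>\max\big(6,\tfrac{9+\g}{2}+2s\big)$, so the proposition applies.

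Reading off the three terms produced by Proposition \ref{prop:AMSY}, the first two match directly the sum $\sum_{a\neq b}$ in \eqref{eq:comm_general} after enlarging $\la\cdot\ra$ to $\la\cdot\ra^{4}$ by monotonicity of $\bm{c}_\g[\,\cdot\,]$. For the remaining factor $\sqrt{\mathscr{D}_{\g+2}[\la\cdot\ra f,\la\cdot\ra^{k/2-2}g]}$ I would use the classical upper bound
$$\mathscr{D}_{\g+2}[G,F]\lesssim \|G\|_{L^{1}_{\g+2+2s}}\,\|\la\cdot\ra^{(\g+2)/2}F\|_{\mathbb{H}^{s}}^{2}$$
(cf.\ \cite{He}) with $G=\la\cdot\ra f$ and $F=\la\cdot\ra^{k/2-2}g$, followed by the weighted inclusion $\|\la\cdot\ra^{(\g+k-2)/2}g\|_{\mathbb{H}^{s}}\lesssim \|\la\cdot\ra^{(\g+k)/2}g\|_{\mathbb{H}^{s}}$ recalled in the Notations subsection. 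This produces precisely the first summand of \eqref{eq:comm_general} and concludes the proof of that estimate.

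For the second bound I would apply \eqref{eq:comm_general} to the triples $(f,\psi,\psi)$ and $(\psi,g,\la\cdot\ra^{\g/2}\psi)$ with $k$ replaced by $k-\g$ in the latter; note that the test weight $\la\cdot\ra^{(k-\g)/2}$ acting on $\la\cdot\ra^{\g/2}\psi$ recovers exactly $\la\cdot\ra^{k/2}\psi$ while $\la\cdot\ra^{(\g+(k-\g))/2}g=\la\cdot\ra^{k/2}g$. Each resulting piece is then either (i) a bilinear product with a factor $\|\la\cdot\ra^{(\g+k)/2}\psi\|_{\mathbb{H}^{s}}$ that Young's inequality absorbs into $\e\big(1+\|f\|_{L^{1}_{\g+3+2s}}^{1/2}+\|g\|_{L^{2}_{k}}\big)\|\la\cdot\ra^{(\g+k)/2}\psi\|_{\mathbb{H}^{s}}^{2}$, or (ii) an integral of the shape $\int\bm{c}_{\g}[\la\cdot\ra^{4}\varphi]\la v\ra^{k}\psi^{2}\d v$ with $\varphi\in\{|f|,|g|,|\psi|\}$, to which I would apply Proposition \ref{prop:ePo} with $\phi=\la\cdot\ra^{k/2}\psi$, $g\leftarrow\la\cdot\ra^{4}\varphi$ and the endpoint $q=2$; the identity $q=d/(d+\g+2\nu)$ uniquely pins down $\nu=-(3+2\g)/4$, producing on the one hand the absorbing piece $\e\|\la\cdot\ra^{(\g+k)/2}\psi\|_{\dot{\mathbb{H}}^{s}}^{2}$ and on the other the moment factors $\|\varphi\|_{L^{1}_{4+|\g|}}$ and $\|\varphi\|_{L^{2}_{2(4+|\g|)}}^{s/(s-\nu)}$ which, together with the $\|\la\cdot\ra^{k/2}\varphi\|_{\mathbb{H}^{s}}^{2}$ inherited from the dissipation bound of the previous paragraph applied to the ``swapped'' commutator, reconstitute $\bm{\Lambda}_{\e}[\varphi]$ of \eqref{eq:Lamb}.

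The most delicate part will be (a) extracting the weighted Sobolev norm $\|\la\cdot\ra^{(\g+k)/2}g\|_{\mathbb{H}^{s}}$ from $\mathscr{D}_{\g+2}$ with its shifted exponent $\g+2$, which is what forces the moment $L^{1}_{\g+3+2s}$ on $f$ in the first term of \eqref{eq:comm_general}; and (b) the endpoint choice $q=2$ in Proposition \ref{prop:ePo}, which is admissible only thanks to the restricted regime $-\frac{3}{2}<\g+2s<0$ imposed in the theorem and which is precisely what forces the otherwise unexpected weights $L^{1}_{4+|\g|}$ and $L^{2}_{2(4+|\g|)}$ appearing in $\bm{\Lambda}_{\e}$.
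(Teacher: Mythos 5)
Your plan tracks the paper's proof very closely: you correctly identify $\mathcal{R}_k$ with the integral treated by Proposition~\ref{prop:AMSY} after taking $\l=k/2$ and $h=\la\cdot\ra^{k/2}\psi$, you bound the dissipation factor $\mathscr{D}_{\g+2}[\la\cdot\ra|f|,\la\cdot\ra^{k/2-2}g]$ via He's trilinear estimate and a weighted Sobolev inclusion, and for the second bound you apply the general commutator estimate to $(f,\psi,\psi)$ and $(\psi,g,\la\cdot\ra^{\g/2}\psi)$ (with $k$ shifted to $k-\g$) and absorb with Young plus the $\e$--Poincar\'e inequality. This is exactly the paper's route.

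Two small technical points you gloss over, both of which the paper handles explicitly. First, the bound you quote for the dissipation, $\mathscr{D}_{\g+2}[G,F]\lesssim\|G\|_{L^1_{\g+2+2s}}\|\la\cdot\ra^{(\g+2)/2}F\|_{\mathbb{H}^s}^2$, is not quite the form that comes out of \cite[Theorem 1.1, Eq.~(1.14)]{He}; there the admissible weights must satisfy $w_1+w_2=\g+2+2s$, which forces the slightly larger exponent $\frac{\g+2+2s}{2}$ (the paper also splits off the $\bm{c}_{\g+2}$ diagonal piece from $\mathscr{D}_{\g+2}$ and estimates it separately via \eqref{eq:conv_mod_soft}). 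Since you then pass to $\|\la\cdot\ra^{(\g+k)/2}g\|_{\mathbb{H}^s}$ anyway, this does not change the final estimate, but the ``classical'' bound you cite should be corrected. Second, and more substantively: your claim that the ``endpoint'' choice $q=2$ in Proposition~\ref{prop:ePo} ``is admissible'' is only correct when $\g<-3/2$. When $-3/2\le\g$ the admissibility window requires $q<\frac{3}{3+\g}\le2$, so $q=2$ is out of range; the paper explicitly patches this (see the footnote in the proof of Proposition~\ref{cor:Comm}) by applying the $\e$--Poincar\'e inequality with some $q<2$ and then interpolating $\|\la\cdot\ra^{|\g|+4}\varphi\|_{L^q}$ between $L^1$ and $L^2$. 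You need this interpolation step to cover the full parameter range allowed by the hypotheses.
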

\begin{proof} Observing that
$$\mathcal{R}_{k}(f,g,\psi)=-\int_{\R^{6}\times \S^{2}}b(\cos\theta)|v-\vet|^{\g}\left(\langle v'\rangle^{\frac{k}{2}}-\langle v\rangle^{\frac{k}{2}}\right)f_{\ast} g\,\langle v'\rangle^{\frac{k}{2}}\psi'\d v\d\vet\d\sigma\,,
$$
we can apply Proposition \ref{prop:AMSY} with $\l=\frac{k}{2}$ and $h=\langle \cdot \rangle^{\frac{k}{2}}\psi$ to deduce that there exists a constant $C_{k}=\kappa_{\l,b}$ such that, 
\begin{multline}\label{eq:Rkfgpsi}
\left|\mathcal{R}_{k}(f,g,\psi)\right| \leq  C_{k}\sqrt{\mathscr{D}_{\g+2}\left[\langle \cdot \rangle|f|\,,\,G\right]}\left(\int_{\R^{3}}\bm{c}_{\g}[\langle \cdot \rangle|f|]\langle v\rangle^{k}\psi^{2}(v)\d v\right)^{\frac{1}{2}}\\
 + C_{k}\underset{a\neq b}{\sum_{a,b \in \{|f|,|g|\}}}\,\left(\int_{\R^{3}}\bm{c}_{\g}\left[\langle \cdot \rangle^{4}\,a\right](v)\langle v\rangle^{k}b^{2}(v)\d v\right)^{\frac{1}{2}}
\left(\int_{\R^{3}}\bm{c}_{\g}\left[\langle \cdot \rangle^{4}\,a\right](v)\langle v\rangle^{k}\psi^{2}(v)\d v\right)^{\frac{1}{2}}\,,\end{multline}
with
$$G(v)=\langle v\rangle^{\frac{k}{2}-2}g(v)=\langle v\rangle^{-2} \left( \la v \ra^{\frac{k}{2}} g(v) \right).$$
  We estimate the term $\mathscr{D}_{\g+2}\big[\langle \cdot \rangle|f|,G\big]$ by comparing it to $\Q_{\g+2}$ (recall the notations introduced in Remark \ref{rmq:Dgamma}).  Using  the the weak form \eqref{eq:weak} together with the notations of Lemma \ref{lem:coerc} 
$$\left\la \Q_{\gamma+2} \left( \la v \ra |f|, G \right) , G\right\ra= - \frac{1}{2} \mathscr{D}_{\gamma+2} \left[ \la \cdot \ra |f| , G \right] + C_b \int_{\R^3} \bm{c}_{\gamma+2} \left[  \la \cdot \ra |f| \right] G^{2}(v) \d v \,$$
where we wrote $G(v')-G(v)=\frac{1}{2}\left[(G(v')^{2}-G(v)^{2}\right]-\frac{1}{2}\left[G(v')-G(v)\right]^{2}$ and used also Lemma \ref{lem:canc} and Remark \ref{nb:canc}. \color{black}
On the one hand, noticing that $\g+2+2s >0$, we deduce from \cite[Theorem 1.1, Eq. (1.14)]{He}, as well as \cite[Remark 1.5]{He} since $\g+2  >-\frac{3}{2}$, that
$$\left| \left\la \Q_{\gamma+2} ( \la \cdot \ra |f| , G),G \right\ra \right| \lesssim \|\langle \cdot\rangle f \|_{L^1_{\alpha}} \| \la \cdot \ra^{w_1} G\|_{\mathbb{H}^{a} } \| \la \cdot \ra^{w_2} G\|_{\mathbb{H}^{b} }\,,$$
where $a+b=2s$ and $w_{1}+w_{2}=\g+2+2s$, $\alpha=\g+2+2s+(-w_{1})^{+}+(-w_{2})^{+}.$ Choosing $a=b=s$ and $w_{1}=w_{2}=\frac{\g}{2}+1+s$, we deduce that
$$\left| \left\la \Q_{\gamma+2} ( \la \cdot \ra |f| , G),G \right\ra \right| \lesssim \| f \|_{L^1_{\g+3+2s}} \| \la \cdot \ra^{ \frac{\g}{2}+1+s } G\|_{\mathbb{H}^{s}}^{2}\lesssim \| f \|_{L^1_{\gamma +3 + 2s } } \| \la \cdot \ra^{\frac{\g+k}{2}} g \|^2_{ \mathbb{H}^s } \,,$$
where, we observe, since $s \in (0, 1)$
$$\| \la \cdot \ra^{ \frac{\g}{2}+1+s } G\|_{\mathbb{H}^{s}} = \| \la \cdot \ra^{1 + s - 2} \la \cdot \ra^{\frac{\g+k}{2}} g \|^2_{ \mathbb{H}^s }  \lesssim \| \la \cdot \ra^{\frac{\g+k}{2}} g \|^2_{ \mathbb{H}^s} \, .$$ On the other hand, using \eqref{eq:conv_mod_soft} and the fact that $(\gamma + 2) + 2 s > 1$, we have similarly
 	$$\int_{\R^3} \bm{c}_{\gamma+2} \left[ \langle \cdot \rangle\,|f| \right] G^{2}(v) \d v \lesssim \|f \|_{L^1_{1+|2+\gamma|}}\| \la \cdot \ra^{ \frac{\g}{2}+1 } G\|_{\mathbb{H}^{s}}^{2} \lesssim \|f\|_{L^{1}_{1+|2+\gamma|}}\,\| \la \cdot \ra^{ \frac{\gamma+k}{2} } g \|^2_{\mathbb{H}^s} \, .$$
 These estimates yield
 	$$\mathscr{D}_{\gamma+2} \left[ \la\cdot \ra |f|, G \right] \lesssim \| f \|_{L^1_{\gamma +3 + 2s } } \| \la \cdot \ra^{ \frac{\g+k}{2} } g \|^2_{ \mathbb{H}^s } \,.$$We deduce from this and \eqref{eq:Rkfgpsi} the general commutator estimate \eqref{eq:comm_general}.
	
\medskip
We now turn to the case $f = \psi$ and then $g = \psi$. For $g=\psi$, the sum in \eqref{eq:comm_general} is equal to $S=S_{1}+S_{2}$ with
$$S_{1}=C_{k} \int_{\R^{3}}\bm{c}_{\g}\left[\langle \cdot \rangle^{4}\,|f|\right](v)\langle v\rangle^{k}\psi^{2}(v)\d v\,,$$
and
$$S_{2}=C_{k}\left(\int_{\R^{3}}\bm{c}_{\g}\left[\langle \cdot \rangle^{4}\,|\psi|\right](v)\langle v\rangle^{k}f^{2}(v)\d v\right)^{\frac{1}{2}}
\left(\int_{\R^{3}}\bm{c}_{\g}\left[\langle \cdot \rangle^{4}\,|\psi|\right](v)\langle v\rangle^{k}\psi^{2}(v)\d v\right)^{\frac{1}{2}}.$$
Using \eqref{eq:conv_soft} with $\beta = 0$ and $k \geq 2 ( 4+\l+|\g| )$, that is $k > 11 + 4s$, and Young's inequality, there exists a constant $C'_{k} >0$ such that 
\begin{equation*}\begin{split}
S_{2} &\leq C'_{k} \| \la \cdot \ra^{ 4+\l+|\g| } \psi\|_{L^{2} }\|\langle \cdot\rangle^{\frac{\g+k}{2}}f\|_{\mathbb{H}^{s}}\|\langle\cdot\rangle^{\frac{\g+k}{2}}\psi\|_{\mathbb{H}^{s}}\\
& \leq  \varepsilon\|\langle\cdot\rangle^{\frac{\g+k}{2}}\psi\|_{\mathbb{H}^{s}}^{2}\,+\frac{( C'_{k} )^2 }{4\varepsilon}\|\psi\|_{L^{2}_{k}}^{2}\,\|\langle\cdot\rangle^{\frac{\g+k}{2}}f\|_{\mathbb{H}^{s}}^{2}\end{split}\end{equation*}
for all $\varepsilon >0$.  Now, according to Proposition \ref{prop:ePo} and since $\g +2s >-\frac{3}{2}$ ensures that $2 > \frac{3}{3+\g+2s}$, we can deduce \footnote{Notice that if $2 < \frac{3}{3+\g},$ i.e. $\g < -\frac{3}{2}$, one can apply directly Prop. \ref{prop:ePo}.  On the contrary, if $2 \geq \frac{3}{3+\g}$, then one applies Prop. \ref{prop:ePo} with some $\frac{3}{2+\g+2s} < q < \frac{3}{3+\g} \leq 2$ and use a simple interpolation
$$\|\la \cdot \ra^{|\g|+4}|f|\|_{L^{q}} \leq (1-\theta)\|\la \cdot \ra^{|\g|+4}|f|\|_{L^{1}}+\theta\|\la \cdot \ra^{|\g|+4}|f|\|_{L^{2}}, \qquad \theta=2-\frac{2}{q}\,,$$
to conclude.}
$$
S_{1} \leq \varepsilon \left\|\langle \cdot \rangle^{\frac{\g+k}{2}}\psi\right\|_{{\mathbb{H}}^{s}}^{2}
+C_{0}\left( \|f\|_{L^{1}_{4}}  +\e^{-\frac{\nu}{s-\nu}}\left\|\langle \cdot \rangle^{|\g|+4}f\right\|_{L^2}^{\frac{s}{s-\nu}}\right)\|\langle \cdot \rangle^{\frac{k}{2}}\psi\|_{L^2}^{2},
$$
which gives, for all $\varepsilon >0$,
$$S \leq 2\varepsilon\, \left\|\langle \cdot \rangle^{\frac{\g+k}{2}}\psi\right\|_{{\mathbb{H}}^{s} }^{2} + C_{\e}\,\left(\|f\|_{L^{1}_{4+|\g|}}  + \left\|\langle \cdot \rangle^{\frac{\g+k}{2}}f\right\|_{{\mathbb{H}}^{s}}^{2} + \| f\|_{L^{2}_{ 2 (4+|\g|) } }^{\frac{s}{s-\nu}}\right)\,\| \psi\|_{L^2_k}^{2}$$
for some $C_{\e} >0$ and $\nu=-\frac{3+2\g}{4}.$ The second term in \eqref{eq:comm_general} is given by
$$S_{0}:=C_{k} \|f \|_{L^1_{\g+3+2s}}^{\frac{1}{2}} \| \langle \cdot\rangle^{\frac{k+\g}{2}}\,\psi \|_{ \mathbb{H}^s } \left(\int_{\R^{3}} \bm{c}_\gamma [ \la \cdot \ra |f| ] \langle v\rangle^{k}\psi^2 \d v \right)^{\frac{1}{2}}\,,$$
and the integral is  estimated using again the $\e$-Poincar\'e inequality, Proposition \ref{prop:ePo}, to deduce that for all $\delta >0$,
\begin{multline*}
 \left(\int_{\R^{3}} \bm{c}_\gamma [ \la \cdot \ra |f| ] \langle v\rangle^{k}\psi^2 \d v \right)^{\frac{1}{2}}\\
\leq 
 \left(\delta\|\langle \cdot\rangle^{\frac{\g+k}{2}}\psi\|_{\mathbb{H}^{s}}^{2}+C_{0}\left( \|f\|_{L^{1}_{1}}  +\delta^{-\frac{\nu}{s-\nu}}\left\|\langle \cdot \rangle^{|\g|+1}f\right\|_{L^2}^{\frac{s}{s-\nu}}\right)\|\langle \cdot \rangle^{\frac{k}{2}}\psi\|_{L^2}^{2}\right)^{\frac{1}{2}}\\
\leq \frac{1}{2}\sqrt{\delta}\|\langle \cdot\rangle^{\frac{\g+k}{2}}\psi\|_{\mathbb{H}^{s}}+\frac{1}{2}\sqrt{C_{0}}\left( \|f\|_{L^{1}_{1}}  +\delta^{-\frac{\nu}{s-\nu}}\left\|\langle \cdot \rangle^{|\g|+1}f\right\|_{L^2}^{\frac{s}{s-\nu}}\right)^{\frac{1}{2}}\|\langle \cdot \rangle^{\frac{k}{2}}\psi\|_{L^2}.\end{multline*}
Thus,
\begin{multline*}
S_{0} \leq \frac{C_{k}}{2}\sqrt{\delta}\,\|f\|_{L^{1}_{\g+3+2s}}^{\frac{1}{2}}\|\langle \cdot\rangle^{\frac{\g+k}{2}}\psi\|_{\mathbb{H}^{s}}^{2}\\+\frac{1}{2}\sqrt{C_{0}}\|f\|_{L^{1}_{\g+3+2s}}^{\frac{1}{2}}\left( \|f\|_{L^{1}_{1}}  +\delta^{-\frac{\nu}{s-\nu}}\left\|\langle \cdot \rangle^{|\g|+1}f\right\|_{L^2}^{\frac{s}{s-\nu}}\right)^{\frac{1}{2}}\|\langle \cdot \rangle^{\frac{k}{2}}\psi\|_{L^2}\|\langle \cdot\rangle^{\frac{\g+k}{2}}\psi\|_{\mathbb{H}^{s}}.
\end{multline*}
Choosing $\delta=2C_{k}^{-1}\e^{2}$ and using Young's inequality, we deduce that
$$S_{0} \leq \e\|f\|_{L^{1}_{\g+3+2s}}^{\frac{1}{2}}\|\langle \cdot\rangle^{\frac{\g+k}{2}}\psi\|_{\mathbb{H}^{s}}^{2} + \tilde{C}_{\e}\|f\|_{L^{1}_{\g+3+2s}}^{\frac{1}{2}}\left( \|f\|_{L^{1}_{1}}  +\delta^{-\frac{\nu}{s-\nu}}\left\|  f\right\|_{L^2_{ 2(|\g|+1) }}^{\frac{s}{s-\nu}}\right) \| \psi\|_{L^2_k}^{ 2}.$$
Gathering the estimates for $S$ and $S_{0}$, we deduce that
$$|\mathcal{R}_{k}(f,\psi,\psi)| \leq \varepsilon\left(1+\|f\|_{L^{1}_{\g+3+2s}}^{\frac{1}{2}}\right)\left\|\langle \cdot\rangle^{\frac{\g}{2}}\psi\right\|_{\mathbb{H}^{s}_k }^{2}
+\bm{\Lambda}_{\e}[f]\,\|  \psi\|_{L^2_k}^{2}\,,$$
where
$$\bm{\Lambda}_{\e}[f]=\bm{C}_{\e} \left(1+\|f\|_{L^{1}_{\g+3+2s}}^{\frac{1}{2}}\right)
\left(\|f\|_{L^{1}_{4+|\g|}}  + \left\|\langle \cdot \rangle^{\frac{\g+k}{2}}f\right\|_{{\mathbb{H}}^{s}}^{2} + \|f\|_{L^{2}_{2 (4+|\g|) }}^{\frac{s}{s-\nu}}\right)\,,$$
for some positive $\bm{C}_{\e} >0$ depending on $k,s,\g$ and $\nu=-\frac{3+2\g}{4}$.

\medskip
We proceed in the same way to estimate $|\mathcal{R}_{k}(\psi,g,\psi)|$. In this case, the sum $S' = S_1' + S_2'$ in \eqref{eq:comm_general} is given by
$$S_1' = C_k \left( \int_{\R^{3}} \bm{c}_\gamma [ \la \cdot \ra^4 |g| ] \langle v\rangle^{k - \gamma }\psi^2 \d v \right)^{ \frac{1}{2} } \left( \int_{\R^{3}} \bm{c}_\gamma [ \la \cdot \ra^4 |g| ] \langle v\rangle^{k }\psi^2 \d v \right)^{ \frac{1}{2} } \, ,$$
$$S_2' = C_k \left( \int_{\R^{3}} \bm{c}_\gamma [ \la \cdot \ra^4 |\psi| ] \langle v\rangle^{k - \gamma }g^2 \d v \right)^{ \frac{1}{2} } \left( \int_{\R^{3}} \bm{c}_\gamma [ \la \cdot \ra^4 |\psi| ] \langle v\rangle^{k }\psi^2 \d v \right)^{ \frac{1}{2} } \, .$$
The two integrals composing $S_2'$ are estimated using \eqref{eq:conv_soft} with $\beta = -\gamma$ and $\beta = 0$ respectively, which yields
\begin{equation*}\begin{split}
	S'_2 &\le C_k \| \la \cdot \ra^{4 + 2 | \gamma | + \ell } \psi \|_{L^2} \left( \| \la \cdot \ra^{\frac{\g+k}{2}} g \|_{ \mathbb{H}^s }^2 + \| g \|^2_{L^2_k} \right)^{\frac{1}{2}} \left( \| \la \cdot \ra^{\frac{\g+k}{2}} \psi \|_{ \mathbb{H}^s  }^2 + \| \psi \|^2_{L^2_k} \right)^{ \frac{1}{2} } \\
	&\le  \e \| \la \cdot \ra^{\frac{\g+k}{2}} \psi \|^2_{\mathbb{H}^s  } + \e \| \psi \|_{L^2_k}^2 + \frac{C_k^2}{4 \e} \left( \| \la \cdot \ra^{\frac{\g+k}{2}} g \|^2_{ \mathbb{H}^s  } + \| g \|_{L^2_k}^2 \right) \| \psi \|^2_{L^2_k} \,.\end{split} 
\end{equation*}
Now, the first integral in $S_1'$ is estimated using \eqref{eq:conv_soft} with $\beta = -\gamma$ while the second is estimated thanks to Proposition \ref{prop:ePo}.   We deduce that
\begin{multline*}
S_1'  \le   C_k \| \la \cdot \ra^{4 + 2 | \gamma | + \ell } g \|_{L^2}^{ \frac{1}{2} } \left( \| \la \cdot \ra^{\frac{\g+k}{2}} \psi \|_{ \mathbb{H}^s }^2 + \| \psi \|^2_{L^2_k} \right)^{\frac{1}{2}} \\
\times \left(  \delta \left\|\langle \cdot \rangle^{\frac{\g+k}{2}}\psi\right\|_{{\mathbb{H}}^{s}}^{2}
+C_{0}\left( \| g \|_{L^{1}_{4}}  +\delta^{-\frac{\nu}{s-\nu}}\left\|\langle \cdot \rangle^{|\g|+4} g \right\|_{L^2}^{\frac{s}{s-\nu}}\right)\| \la \cdot \ra^{ \frac{\g+k}{2} } \psi\|_{L^2}^{2} \right)^{\frac{1}{2} }\,,\end{multline*}
which, using Young's inequality, yields for any $\e >0$,
\begin{multline*}
S_{1}'\le  \widetilde{C}_{k} \left( \frac{\delta}{\e} + \e \| \la \cdot \ra^{4 + 2 |\gamma| + \ell } g\|_{L^2} \right) \| \la \cdot \ra^{\frac{\g+k}{2}} \psi \|_{ \mathbb{H}^s  }^2  \\
+ \widetilde{C}_{k, \e}  \left( { \| \la \cdot \ra^{4 + 2 |\gamma| + \ell } g\|_{L^2} } + \| g \|_{L^{1}_{4}} +\delta^{-\frac{\nu}{s-\nu}}\left\|\langle \cdot \rangle^{|\g|+4} g \right\|_{L^2}^{\frac{s}{s-\nu}} \right) \| \psi \|_{L^2_k}^2 \,.\end{multline*}
Taking $\delta = \e^2$, we therefore have, for $k > 11 + 4s$, that
\begin{multline*}
S' \leq \e \, \widetilde{C}_k  \left( 1 + \| g \|_{L^2_k} \right) \left\|\langle \cdot \rangle^{\frac{\g+k}{2}}\psi\right\|_{{\mathbb{H}}^{s} }^{2} \\
+ C_{\e}\,\left( { \| g\|_{L^2_{k}}+ } \|g\|_{L^{1}_{4+|\g|}}  + \left\|\langle \cdot \rangle^{\frac{\g+k}{2}}g\right\|_{{\mathbb{H}}^{s}}^{2} + \|g\|_{L^{2}_{2 (4+|\g|) }}^{\frac{s}{s-\nu}}\right)\,\| \psi\|_{L^2_k}^{2}.\end{multline*}
The third term in \eqref{eq:comm_general} is given by
$$S_{0}':=C_{k} \| \psi \|_{L^1_{\g+3+2s}}^{\frac{1}{2}} \| \la \cdot \ra^{\frac{k}{2}}g \|_{ \mathbb{H}^s} \left(\int \bm{c}_\gamma [ \la \cdot \ra | \psi| ] \langle v\rangle^{k}\psi^2 \d v \right)^{\frac{1}{2}}\,.$$
We estimate this integral thanks to \eqref{eq:conv_soft} with $\beta = 0$ to deduce that 
$$S_{0}' \lesssim \|\psi \|_{L^1_{\g+3+2s}}^{\frac{1}{2}} \| \la \cdot \ra^{\frac{k}{2}} g \|_{ \mathbb{H}^s} \| \la \cdot \ra^{|\g|+1+\l} \psi\|_{L^{2}}^{\frac{1}{2}}\|\langle\cdot\rangle^{\frac{\g+k}{2}}\psi\|_{\mathbb{H}^{s} }
\lesssim \| \psi\|_{L^2_k}\|\la \cdot\ra^{\frac{k}{2} }g \|_{ \mathbb{H}^s} \|\langle\cdot\rangle^{\frac{\g+k}{2}}\psi\|_{\mathbb{H}^{s}}\,,$$
as soon as $k > 5 + 4s + \max\{ 0, 4 + 2 \gamma \}$.  Then, using Young's inequality yields
$$|\mathcal{R}_{k - \gamma}(\psi,g, \la \cdot \ra^{ \frac{\gamma}{2} } \psi)| \leq \varepsilon \left( 1 + \| g \|_{L^2_k} \right) \left\|\langle \cdot\rangle^{\frac{\g+k}{2}}\psi\right\|_{\mathbb{H}^{s} }^{2}
+\bm{\Lambda}_{\e}[g]\,\| \psi\|_{L^2_k}^{2} \, .$$
The result is proved since the worst constraint is $k > 11 + 4s$.
\end{proof}
\subsection{Stability of solutions -- Proof of Theorem \ref{theo:unique}}
We are now in position to prove the main stability result for solutions to \eqref{eq:Boltz} under the Prodi-Serrin condition as stated in the Introduction.
 \begin{proof}[Proof of Theorem \ref{theo:unique}]
 We consider two solutions $h,g$ with   initial data $h(0)=h_{\rm in}$ and $g(0)=g_{\rm in}$,
 and write $f=g-h$. {We notice first that, since $h_{\rm in}, g_{\rm in} \in L^{2}_{k+|\g|}(\R^{d}) \cap L^{1}_{\ell}(\R^{d})$, estimate \eqref{eq:propaLp} implies that
\begin{equation*} 
h,g \in L^{\infty}\left([0,T]\,;\;L^{2}_{k+2|\g|}(\R^{d})\right) \qquad \text{ and } \qquad \la \cdot \ra^{\frac{k-\g}{2}}h, \la \cdot \ra^{\frac{k-\g}{2}}g \in L^{2}\left([0,T]\,;\,\mathbb{H}^{s}\right).
\end{equation*}
Since we also assumed $k >11+4s > 8+|\g|$, one checks easily with the definition \eqref{eq:Lamb} that
\begin{equation}\label{eq:propaL2}\bm{\Lambda}_{\e}[h] \in L^{1}([0,T]) \quad \text{ and } \quad \bm{\Lambda}_{\e}[g] \in L^{1}([0,T]), \quad \e >0.\end{equation}
} We are interested in the evolution of 
 $$\mathscr{N}(t)=\|h(t)-g(t)\|_{L^{2}_k }^{2}=\int_{\R^{d}}\left|f(t,v)\right|^{2}\langle v\rangle^{k}\d v.$$
It follows that
$$\frac{1}{2}\dfrac{\d}{\d t}\mathscr{N}(t)=\mathscr{N}_{1}(t)+\mathscr{N}_{2}(t)\,,$$
where
$$\mathscr{N}_{1}(t)=\int_{\R^{d}}\Q\left(g(t),f(t)\right)f(t,v)\langle v\rangle^{k}\d v, \qquad \mathscr{N}_{2}(t)=\int_{\R^{d}}\Q\left(f(t),h(t)\right)(v)f(t,v)\langle v\rangle^{k}\d v.$$
Setting
$$F(t,v)= \la v \ra^{ \frac{k}{2} } f(t,v) \qquad\text{and}\qquad H(t,v)=\la v \ra^{ \frac{k}{2} } h(t,v)\,,$$
one has from \eqref{eq:commutator_implicit_def}
$$\mathscr{N}_{1}(t)=\langle \Q(g(t),f(t)),f(t)\rangle_{L^{2}_k } =\langle \Q(g(t),F(t))\,,F(t)\rangle_{L^{2}} +\mathcal{R}_{k}(g(t),f(t),f(t))\,,$$
and
$$\mathscr{N}_{2}(t)=\langle \Q(f(t), \la \cdot \ra^{ -\frac{\g}{2} }  H (t)), \la \cdot \ra^{ \frac{\g}{2} } F(t)\rangle_{L^{2}}+\mathcal{R}_{k-\gamma}(f(t),h(t), \la \cdot \ra^{ \frac{\gamma}{2} } f(t)) \, .$$
We estimate the first term in $\mathscr{N}_{2}(t)$ using \cite[Eq. (2.1)]{amu11} and \cite[Proposition 2.1]{amu11}, both with $m = \ell = 0$, so that 
\begin{multline*}
\langle \Q(f(t), \la \cdot \ra^{ -\frac{\g}{2} } H(t)), \la \cdot \ra^{ \frac{\g}{2} }  F (t)\rangle_{L^{2}} \\
\leq C_{\gamma, s,b} \left( \|f(t)\|_{ L^{1}} +\|f(t)\|_{L^{2}}\right) \|\langle \cdot\rangle^{\frac{\g}{2}}F(t)\|_{\mathbb{H}^{s}}\|\langle \cdot\rangle^{-\frac{\g}{2}}H(t)\|_{\mathbb{H}^{s}}.\end{multline*}
A suitable use of Young's inequality allows to deduce that there is a constant $C >0$ such that
$$\langle \Q(f(t), \la \cdot \ra^{-\frac{\g}{2}} H(t)), \la \cdot \ra^{\frac{\g}{2}}  F(t)\rangle_{L^{2}} \leq \e\|\langle \cdot\rangle^{\frac{\g}{2}}F(t)\|_{\mathbb{H}^{s}}^{2}+\frac{C}{\e}\|F(t)\|_{L^2}^{2}\|\langle \cdot\rangle^{- \frac{\g}{2}}H(t)\|_{\mathbb{H}^{s}}^{2}$$
for any $\e >0$ and $t \in [0,T]$.  Together with the control of $\mathcal{R}_{k-\g}(f(t),  h(t), \la \cdot \ra^{\frac{\g}{2}} f(t))$ and $\mathcal{R}_{k}(g(t),f(t),f(t))$ in Corollary \ref{cor:Comm} gives
\begin{multline*}
|\mathscr{N}_{2}(t)| + |\mathcal{R}_{k}(g(t),f(t),f(t))| \leq  \e\left(2+C_{1}\right)\left\|\langle \cdot\rangle^{\frac{\g}{2}}F(t)\right\|_{\mathbb{H}^{s}}^{2}\\
+\left(\bm{\Lambda}_{\e}[h(t)] + \bm{\Lambda}_{\e}[g(t)] + C\e^{-1}\|\langle \cdot\rangle^{-\frac{\g}{2}}H(t)\|_{\mathbb{H}^{s}}^{2} 	\right)\,\|F(t)\|_{L^2}^{2}\,,\end{multline*}
where $C_{1}=\max_{t \in [0,T]}\sqrt{\lm_{\frac{k}{2}+3+2s}(t)}.$  It remains only to estimate the term
 \begin{equation*}\begin{split}
\langle \Q(g(t),F(t)),F(t)\rangle_{L^{2}}&=\int_{\R^{2d}\times \S^{d-1}}B(u,\sigma)g(t,v_{*})F(t,v)\left[F(t,v')-F(t,v)\right]\d v\d v_{*}\d \sigma\\
&=\frac{1}{2}\int_{\R^{2d}\times\S^{d-1}}B(u,\sigma)g(t,\vet)\left[F(t,v')^{2}-F(t,v)^{2}\right]\d\sigma\d v\d v_{*}\\
&\phantom{+++} -\frac{1}{2}\int_{\R^{2d}\times\S^{d-1}}B(u,\sigma)g(t,\vet)\left[F(t,v')-F(t,v)\right]^{2}\d\sigma\d v\d v_{*}\,.\end{split}\end{equation*}
Using the Cancellation Lemma \ref{lem:canc} and the coercivity estimate Lemma \ref{lem:coerc}, we deduce that
{\begin{multline*}
\langle \Q(g(t),F(t)),F(t)\rangle_{L^{2}} \leq -\frac{c_{0}}{2} \left\|\langle \cdot \rangle^{\frac{\g}{2}}F(t)\right\|_{{\mathbb{H}}^{s}}^{2} + C_{0}\,\left\|\langle \cdot\rangle^{\frac{\g}{2}}F(t)\right\|_{L^2}^{2} \\
 + \frac{1}{2}\|\tilde{b}\|_{L^{1}(\S^{d-1})}\int_{\R^{d}}F^{2}(t,v)\bm{c}_{\g}[g(t)](v)\d v\,,\end{multline*}
where we also used that $$-\|\langle \cdot \rangle^{\frac{\g}{2}}F(t)\|_{\dot{\mathbb{H}}^{s}}^{2}=-\|\langle \cdot \rangle^{\frac{\g}{2}}F(t)\|_{\mathbb{H}^{s}}^{2}+\|\langle \cdot \rangle^{\frac{\g}{2}}F(t)\|_{L^{2}}^{2}$$ to obtain a bound involving the full $\mathbb{H}^{s}$ norm.}   Thanks to $\e$-Poincar\'e inequality \eqref{eq:estimatc}, we deduce for any $\e >0$ that there exists a constant $C_{\e} >0$ such that
\begin{multline*}
\int_{\R^{d}}F^{2}(t,v)\bm{c}_{\g}[g(t)](v)\d v \leq \e\,\left\|\langle \cdot \rangle^{\frac{\g}{2}}F(t)\right\|_{ {{\mathbb{H}}^{s}}}^{2}
\\
+C_{\e}\left( \|g(t)\|_{L^1}  + {\left\|\langle \cdot \rangle^{|\g|}g(t)\right\|_{L^q}^{\frac{s}{s-\nu}}}\right)\int_{\R^{d}}F^{2}(t,v)\langle v\rangle^{\g}\d v\,.\end{multline*}
Gathering all these estimates and choosing  {$\e=\e_{0} >0$ sufficiently small, we observe that
$$
\dfrac{1}{2}\dfrac{\d}{\d t}\mathscr{N}(t) + \frac{c_{0}}{4} \left\|\langle \cdot \rangle^{\frac{\g}{2}}F(t)\right\|_{ {{\mathbb{H}}^{s}}}^{2} \leq 
{\Theta}(t) \left\|\langle \cdot\rangle^{\frac{\g}{2}}F(t)\right\|_{L^2}^{2}\,,$$
where
$${\Theta}(t)=C\left(1+\bm{\Lambda}_{\e_{0}}[h(t)]+\bm{\Lambda}_{\e_{0}}[g(t)]+\|\langle \cdot \rangle^{\frac{k-\g}{2}}h(t)\|_{\mathbb{H}^{s}}^{2}+\left\|\langle \cdot \rangle^{|\g|}g(t)\right\|_{L^q}^{\frac{s}{s-\nu}}\right)\,,$$}
with $C >0$ depending explicitly on 
$$\sup_{t \in [0,T]}\left(\|h(t)\|_{L^{1}_{\g+3+2s}}+\|f(t)\|_{L^{1}_{\g+3+2s}}+\|g(t)\|_{L^{1}_{\g+3+2s}}\right).$$
 {By assumption \eqref{eq:ProSergf}, recall that $r=\frac{s}{s-\nu}$, and thanks to \eqref{eq:propaL2} we conclude that $\Theta \in L^{1}([0,T])$}.  Gronwall Lemma gives the conclusion. \end{proof} 

\appendix

\section{Technical results}\label{app:tech}
The following elementary inequality can be found in \cite{ricardo}.
\begin{lem} For any $X,Y \geq0$ and any $p >1$, one has
\begin{equation}\label{eq:XY}
Y\left[X^{p-1}-Y^{p-1}\right] \leq \frac{1}{p'}\left[X^{p}-Y^{p}\right] - \frac{1}{\max(p,p')}\left[X^{\frac{p}{2}}-Y^{\frac{p}{2}}\right]^{2}.\end{equation}
Moreover, 
\begin{equation}\label{eq:X2}
Y\left|X^{p-1}-Y^{p-1}\right] \leq |X^{\frac{p}{2}}-Y^{\frac{p}{2}}|\,\left[X^{\frac{p}{2}}+Y^{\frac{p}{2}}\right].\end{equation}
 \end{lem}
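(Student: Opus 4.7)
\medskip

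\noindent\textbf{Proof plan.} The inequality \eqref{eq:X2} is essentially immediate. By the symmetry of the roles of $X$ and $Y$ I may assume $X \geq Y$, in which case it suffices to verify $Y(X^{p-1}-Y^{p-1}) \leq X^p - Y^p$. This follows from $YX^{p-1} \leq X^p$ combined with the elementary factorization $X^p - Y^p = (X^{p/2}-Y^{p/2})(X^{p/2}+Y^{p/2})$, which matches exactly the right-hand side of \eqref{eq:X2}.

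The interesting content is \eqref{eq:XY}. First I would rewrite it in the equivalent form
$$\frac{1}{p'}X^p + \frac{1}{p}Y^p - YX^{p-1} \;\geq\; \frac{1}{\max(p,p')}\left(X^{p/2}-Y^{p/2}\right)^2,$$
which exhibits \eqref{eq:XY} as a quantitative refinement of Young's inequality $YX^{p-1} \leq \tfrac{1}{p}Y^p + \tfrac{1}{p'}X^p$. Both sides are positively $p$-homogeneous in $(X,Y)$; the case $Y=0$ is trivial (since $\tfrac{1}{p'} \geq \tfrac{1}{\max(p,p')}$), so I may fix $Y=1$. With the change of variables $u := X^{p/2}$ and $\alpha := 2/p' = 2(p-1)/p$, the inequality becomes
$$\Psi(u) := \frac{1}{p'}u^2 + \frac{1}{p} - u^{\alpha} - \frac{1}{\max(p,p')}(u-1)^2 \;\geq\; 0, \qquad u \geq 0.$$

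The crucial algebraic observation is that $\tfrac{1}{p}+\tfrac{1}{p'}=1$ forces $\tfrac{1}{p'}-\tfrac{1}{p} = \alpha-1$, and this makes the constants collapse. I then split the analysis in two cases. In the \emph{case} $1 < p \leq 2$, one has $\alpha \in (0,1]$ and $\max(p,p') = p'$. Expanding $(u-1)^2$ and simplifying using $\tfrac{1}{p}-\tfrac{1}{p'} = 1-\alpha$ collapses $\Psi(u) \geq 0$ to the single inequality $u^{\alpha} \leq 1 + \alpha(u-1)$, which is the tangent-line bound at $u=1$ for the concave function $u \mapsto u^{\alpha}$. In the \emph{case} $p \geq 2$, one has $\alpha \in [1,2)$, $\max(p,p') = p$, and $\alpha - 1 \in [0,1]$. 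Here I would factor $\Psi(u) = u\bigl(1 + (\alpha-1)(u-1) - u^{\alpha-1}\bigr)$ (again using $\tfrac{1}{p'}-\tfrac{1}{p}=\alpha-1$ and $\tfrac{2}{p} = 1 - (\alpha-1)$), and conclude by the tangent-line bound $u^{\alpha-1} \leq 1 + (\alpha-1)(u-1)$ for the concave function $u \mapsto u^{\alpha-1}$.

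The only real subtlety — if one can even call it an obstacle — is spotting that the bookkeeping of the three constants $\tfrac{1}{p}, \tfrac{1}{p'}, \tfrac{1}{\max(p,p')}$ is governed entirely by the conjugate-exponent relation $\tfrac{1}{p}+\tfrac{1}{p'}=1$, so that in each regime the problem collapses to a one-line tangent-line inequality for a concave power. Once that structural identity is isolated, the rest of the proof is purely mechanical.
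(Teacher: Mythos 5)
Your proof of \eqref{eq:XY} is correct and follows the same core reduction as the paper: by $p$-homogeneity one may set $Y=1$ and substitute $x=X^{p/2}$, reducing the claim to the scalar inequality $x^{2/p'}-1 \leq \tfrac{1}{p'}(x^2-1)-\tfrac{1}{\max(p,p')}(x-1)^2$ for $x\geq0$. Where the paper simply cites this scalar inequality from \cite[Lemma 1]{ricardo}, you go further and derive it from scratch; the case split on $p\lessgtr 2$, the identity $\tfrac{1}{p'}-\tfrac{1}{p}=\alpha-1$, and the collapse to the tangent-line bound $u^\beta\leq 1+\beta(u-1)$ for concave powers $\beta\in[0,1]$ all check out, including the factorization $\Psi(u)=u\bigl(1+(\alpha-1)(u-1)-u^{\alpha-1}\bigr)$ in the regime $p\geq2$. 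So your argument is a self-contained expansion of what the paper proves by citation — strictly more informative, no shortcuts taken.

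The one wrinkle is in your treatment of \eqref{eq:X2}. You invoke ``symmetry of the roles of $X$ and $Y$'' to reduce to $X\geq Y$, but the two variables are \emph{not} symmetric: $Y$ appears alone on the left-hand side. The inequality is nonetheless true in both regimes, and both verifications are equally short, but they are genuinely different: for $X\geq Y$ one reduces to $YX^{p-1}\leq X^p$ (i.e., $Y\leq X$), while for $X<Y$ one must instead check $Y^p-YX^{p-1}\leq Y^p-X^p$, i.e., $X^p\leq YX^{p-1}$ (i.e., $X\leq Y$). Replace the appeal to symmetry with this explicit two-case check (or, as the paper does, use $|x^{2/p'}-1|\leq|x^2-1|$ uniformly in $x\geq0$ together with the same substitution as in \eqref{eq:XY}) and the proof is complete.
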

\begin{proof} Observe that, see for example \cite[Lemma 1]{ricardo},
$$x^{\frac{2}{p'}}-1 \leq \frac{1}{p'}(x^{2}-1)-\frac{1}{\max(p,p')}(x-1)^{2}, \qquad \frac{1}{p}+\frac{1}{p'}=1\,,$$
holds for any $x \geq0$.  Observing then $Y\left[X^{p-1}-Y^{p-1}\right]=Y^{p}\left[x^{\frac{2}{p'}}-1\right]$, with $x=\left(\frac{X}{Y}\right)^{\frac{p}{2}}$ one deduces the result. Moreover, since $|x^{\frac{2}{p'}}-1|\leq |x^{2}-1|$ for any $x\geq0$, it follows that
\begin{equation*} 
Y\left|X^{p-1}-Y^{p-1}\right|=Y^{p}\left|\left(\frac{X^{\frac{p}{2}}}{Y^{\frac{p}{2}}}\right)^{\frac{2}{p'}}-1\right|
\leq Y^{p}\left|\left(\frac{X^{\frac{p}{2}}}{Y^{\frac{p}{2}}}\right)^{2}-1\right|=|X^{p}-Y^{p}|\,, \end{equation*}
which gives \eqref{eq:X2}.
\end{proof}
We end this section with an estimate for difference of $\varphi(v')-\varphi(v)$ for polynomially growing mappings $\varphi$. 
\begin{lem}\label{lem:cancel} Assume that $d\in\N$, $d\ge 2$.  For any $\ell \geq0$ and $\alpha \in (0,1]$, there exists $C_{\ell,\alpha} > 0$ depending only on $\ell,d,\alpha$ such that, for any $v,\vet \in \R^{d},$ $\sigma \in \S^{d-1}$, 
\begin{equation}
\label{eq:weight_cancellation}
\left| \la v \ra^\ell - \la v' \ra^\ell \right| \leq C_{\ell,\alpha} \sin\left( \tfrac{\theta}{2} \right)|v-\vet|^{\alpha}\left(\la v\ra^{\ell-\alpha}+\la \vet\ra^{\ell-\alpha}\right)\,,
\end{equation}
where we recall that $\cos \theta=\frac{v-\vet}{|v-\vet|}\cdot \sigma$.  Moreover, for $\ell \geq 2$, 
\begin{multline}
\label{eq:cancel-int}
\left|\int_{\S^{d-1}}\left( \la v \ra^\ell - \la v' \ra^\ell \right)\,b(\cos\theta)\d\sigma\right| \\
\leq C_{\ell}|v-\vet|^{2\alpha}\left(\langle v\rangle^{\ell-2\alpha}+\langle \vet\rangle^{\ell-2\alpha}\right)\int_{0}^{\pi}b(\cos\theta)\sin^{d}\theta\d \sigma
\end{multline}
holds for any nonnegative and measurable $b\::\:[-1,1] \to \R^{+}$.
\end{lem}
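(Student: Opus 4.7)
My plan is to establish the two estimates in turn. Both rely on the factorization
$$v' - v = |u|\sin(\theta/2)\,\mathbf{n}(\theta,\omega), \qquad |\mathbf{n}| = 1,$$
obtained by writing $\sigma = \cos\theta\,\widehat{u} + \sin\theta\,\omega$ with $\omega \in \widehat{u}^\perp \cap \S^{d-1}$; in particular $|v'-v| = \sin(\theta/2)|v-\vet|$.

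For the pointwise bound \eqref{eq:weight_cancellation} in the range $\ell \geq 1$, I would apply the mean-value theorem to $x \mapsto \la x\ra^\ell$ along the segment $[v,v']$,
$$\la v'\ra^\ell - \la v\ra^\ell = \int_0^1 \ell\,\la v_t\ra^{\ell-2}\, v_t \cdot (v'-v)\,\d t, \qquad v_t = v + t(v'-v),$$
yielding $|\la v'\ra^\ell - \la v\ra^\ell| \leq \ell|v'-v|\sup_{t \in [0,1]}\la v_t \ra^{\ell-1}$. The energy bound $|v'|^2 \leq |v|^2 + |\vet|^2$ gives $\la v_t\ra \leq \la v\ra + \la \vet\ra$, and splitting $|u| = |u|^\alpha |u|^{1-\alpha}$ together with the interpolation $|u|^{1-\alpha} \lesssim \la v\ra^{1-\alpha} + \la \vet\ra^{1-\alpha}$ (valid for $\alpha \in (0,1]$ since $|u| \leq \la v\ra + \la \vet\ra$) recovers the claimed form. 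The regime $\ell \in [0,1)$ is treated via the subadditivity $|a^\ell - b^\ell| \leq |a-b|^\ell$ combined with the Lipschitz property $|\la v'\ra - \la v\ra| \leq |v'-v|$.

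The integrated estimate \eqref{eq:cancel-int} is more delicate and hinges on the spherical cancellation
$$\int_{\S^{d-1}} (v' - v)\, b(\cos\theta)\,\d\sigma = -u\,|\S^{d-2}|\int_0^\pi \sin^2(\theta/2)\, b(\cos\theta)\,\sin^{d-2}\theta\,\d\theta,$$
obtained by averaging $\mathbf{n}$ over $\omega \in \widehat{u}^\perp \cap \S^{d-1}$, whose component along $\omega$ has zero mean. Performing a first-order Taylor expansion
$$\la v'\ra^\ell - \la v\ra^\ell = \nabla\la v\ra^\ell \cdot (v'-v) + R(v,v'), \qquad |R| \leq C_\ell |v'-v|^2 (\la v\ra + \la \vet\ra)^{\ell-2},$$
which is legitimate since $\ell \geq 2$, and integrating against $b\,\d\sigma$, both the leading term $\ell\la v\ra^{\ell-2}(v\cdot u)\int\sin^2(\theta/2)b\,\d\sigma$ and the remainder (which inherits a factor $|v'-v|^2 = |u|^2\sin^2(\theta/2)$) share the angular weight $\sin^2(\theta/2)\sin^{d-2}\theta$. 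On the effective support of $b$, where $\cos^2(\theta/2)$ stays bounded away from zero, the identity $4\cos^2(\theta/2)\sin^2(\theta/2)\sin^{d-2}\theta = \sin^d\theta$ converts this weight into the $\sin^d\theta$ factor of the target bound.

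The main obstacle lies in distributing the prefactors $\la v\ra^{\ell-2}|v\cdot u|$ and $|u|^2(\la v\ra + \la \vet\ra)^{\ell-2}$ into the target symmetric form $|u|^{2\alpha}(\la v\ra^{\ell-2\alpha} + \la \vet\ra^{\ell-2\alpha})$. The quadratic remainder is standard: Young's inequality combined with $|u|^{2-2\alpha} \lesssim \la v\ra^{2-2\alpha} + \la \vet\ra^{2-2\alpha}$ (valid since $\alpha \leq 1$) redistributes the weights. The first-order term is trickier: I would use the symmetric decomposition $v\cdot u = \tfrac{1}{2}(|v|^2 - |\vet|^2 + |u|^2)$, restoring $v \leftrightarrow \vet$ symmetry and extracting an additional $|u|$-factor from $||v|^2 - |\vet|^2| \leq |u|(|v|+|\vet|)$, so that $|v\cdot u| \lesssim |u|(\la v\ra + \la \vet\ra) + |u|^2$. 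Young's inequality applied to $|u|(\la v\ra + \la \vet\ra)\,\la v\ra^{\ell-2}$ with conjugate exponents tuned to $\alpha$ then produces the advertised symmetric form.
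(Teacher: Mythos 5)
Your handling of the pointwise bound \eqref{eq:weight_cancellation} is essentially the paper's route: the paper quotes $|\la v\ra^\ell-\la v'\ra^\ell|\le C\sin(\tfrac\theta2)|v-\vet|(\la v\ra^{\ell-1}+\la v'\ra^{\ell-1})$ from \cite[Lemma 2.3]{amu10}, replaces $\la v'\ra$ by $\la v\ra+\la\vet\ra$, and splits $|v-\vet|=|v-\vet|^\alpha|v-\vet|^{1-\alpha}$ exactly as you do; re-deriving the first-derivative bound from the mean-value theorem rather than citing it is fine. However, your fallback for $\ell\in[0,1)$ via $|a^\ell-b^\ell|\le|a-b|^\ell$ yields $\sin^\ell(\tfrac\theta2)$, which for $\ell<1$ is \emph{larger} than $\sin(\tfrac\theta2)$ as $\theta\to0$; it therefore does not produce the claimed single power of $\sin(\tfrac\theta2)$. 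This subcase is never used in the paper (there $\ell=k/p\ge2$ throughout), but as written your argument has a genuine hole there.

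For the integrated estimate \eqref{eq:cancel-int} your approach is genuinely different from the paper's, which invokes the angular-averaging lemma of \cite[Lemma 2.1]{carlen} (see also \cite[Lemma 4]{ricardo}) as a black box: that lemma bounds $\bigl|\int_{\S^{d-2}(\bm u)}(\varphi(v)-\varphi(v'))\,\d\omega\bigr|$ directly by $\sup|\partial^2\varphi|\,|v-\vet|^2\sin^2\theta$, and the factor $\sin^2\theta$ combines with $\sin^{d-2}\theta\,\d\theta$ to give precisely the $\sin^d\theta$ weight, with no residual first-order term. Your explicit Taylor-plus-spherical-cancellation computation instead produces the angular weight $\sin^2(\tfrac\theta2)\sin^{d-2}\theta=\tfrac14\cos^{-2}(\tfrac\theta2)\sin^d\theta$, which is \emph{not} bounded by $\sin^d\theta$ as $\theta\to\pi$. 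Your parenthetical about the ``effective support of $b$'' is doing real work here, but it is not part of the lemma's hypotheses, which allow arbitrary nonnegative measurable $b$. There is a second obstruction in the same spot: your first-order contribution $\ell\la v\ra^{\ell-2}(v\cdot u)$ carries only \emph{one} power of $|v-\vet|$. The identity $|v\cdot u|\lesssim|u|(\la v\ra+\la\vet\ra)+|u|^2$ supplies a second power only in its second piece; for the piece $|u|(\la v\ra+\la\vet\ra)\la v\ra^{\ell-2}$, converting to $|u|^{2\alpha}(\la v\ra^{\ell-2\alpha}+\la\vet\ra^{\ell-2\alpha})$ requires gaining $|u|^{2\alpha-1}$, which for $\alpha>\tfrac12$ is unavailable: as $|v-\vet|\to0$ at fixed $v,\vet$ the first-order piece decays like $|u|$ whereas the target decays like $|u|^{2\alpha}$. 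The cited Carlen--Carvalho--Lu estimate sidesteps both difficulties because it already absorbs the first-order contribution into a single second-derivative bound carrying the clean $\sin^2\theta$ weight; your reconstruction does not replicate that extra cancellation, so it is not a complete substitute for the citation.
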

\begin{proof} The first estimate is essentially established in \cite[Lemma 2.3]{amu10}. Indeed, as shown therein, 
$$|\la v\ra^{\ell}-\la v'\ra^{\ell}| \leq C \sin\left( \tfrac{\theta}{2} \right)|v-\vet|\left(\la v\ra^{\ell-1}+\la v'\ra^{\ell-1}\right)$$
and, since $\la v'\ra \leq \sqrt{\la v\ra^{2}+\la \vet\ra^{2}} \leq \frac{1}{2}\la v\ra+\frac{1}{2}\la \vet\ra$, this gives  \eqref{eq:weight_cancellation} for $\alpha=1$. For $\alpha < 1$, we easily deduce the result from 
$$|v-\vet| \leq |v-\vet|^{\alpha}\max\big(\la v\ra^{1-\alpha},\la \vet\ra^{1-\alpha}\big).$$
For the second estimate, we use a suitable parametrisation of $\S^{d-1}$ given in \cite{carlen} where, given $\bm{u}=\frac{v-\vet}{|v-\vet|}$ (for $v\neq \vet$) one writes $\sigma \in \S^{d-1}$ as
$$\sigma=\cos\theta\,\bm{u}+\sin \theta\,\omega, \qquad \omega \in \S^{d-2}(\bm{u}), \qquad \theta \in [0,\pi]\,,$$
where $\S^{d-2}(\bm{u})=\left\{\omega \in \S^{d-1}\,,\,\omega \cdot \bm{u}=0\right\}$.  Then, using \cite[Lemma 2.1]{carlen}, see also \cite[Lemma 4]{ricardo}, for any smooth $\varphi$
$$\left|\int_{\S^{d-2}(\bm{u})}\left(\varphi(v)-\varphi(v')\right)\d \omega \right|\leq |\S^{d-2}(\bm{u})|\sup_{|x| \leq\sqrt{|v|^{2}+|\vet|^{2}}}|\partial^{2}\varphi(x)||v-\vet|^{2}\sin^{2}\theta \qquad \forall \theta \in [0,\pi].$$
With $\varphi(x)=\langle x\rangle^{\ell},$ $\ell \geq 2$, since $|\partial^{2}\varphi(x)| \leq \ell^{2}\langle x\rangle^{\ell-2}$, one deduces that there is a constant $C_{\ell} >0$ depending on $\ell$ and $d$ only such that
$$\left|\int_{\S^{d-2}(\bm{u})}\left(\la v \ra^\ell - \la v' \ra^\ell\right)\d \omega \right|\leq C_{\ell}\left(\langle v\rangle^{\ell-2}+\langle \vet\rangle^{\ell-2}\right)|v-\vet|^{2}\sin^{2}\theta \qquad \forall \theta \in [0,\pi].$$
This gives \eqref{eq:cancel-int} for $\alpha=2$ after additional integration over $\theta$ since $\d\sigma=\sin^{d-2}\theta\d\theta\d\omega$. One goes then from $\alpha=2$ to any $\alpha \in (0,2]$ as in the proof of \eqref{eq:weight_cancellation}.
\end{proof}
We give here the a proof of Prop. \ref{prop:AMSY} adapted from \cite[Prop. 3.1]{AMSY}. Recall the following lemma established in \cite[Lemma 2.5]{AMSY}. 
\begin{lem}
Let $(v,\vet,\sigma) \in \R^{3}\times \R^{3} \times \S^{2}$ be fixed and let $(v',\vet')$ be the associated post-collisional velocities. Given $\l >6$,  one has that
\begin{equation}\label{eq:Lem25}\begin{split}
\langle v'\rangle^{\l}-\langle v\rangle^{\l}&=\l\langle v\rangle^{\l-2}\,|v-\vet|\left(v\cdot \omega\right)\cos^{\l-1}\left(\tfrac{\theta}{2}\right)\sin\tfrac{\theta}{2}+\langle \vet \rangle^{\l}\sin^{\l}\left(\tfrac{\theta}{2}\right)\\
&+\bm{r}_{1}+\bm{r}_{2}+\bm{r}_{3}+\langle v\rangle^{\l}\left(\cos^{\l}\left(\tfrac{\theta}{2}\right)-1\right),\end{split}
\end{equation}
where 
$$\omega=\frac{\sigma-\left(\sigma\cdot \widehat{u}\right)u}{\left|\sigma-\left(\sigma\cdot \widehat{u}\right)\widehat{u}\right|}, \qquad \widehat{u}=\frac{v-\vet}{|v-\vet|}\,,$$ and there exists $C_{\l} >0$ such that
\begin{equation}\label{eq:r1}
\begin{cases}
\left|\bm{r}_{1}\right| &\leq C_{\l}\langle v\rangle\,\langle \vet\rangle^{\l-1}\sin^{\l-3}\left(\tfrac{\theta}{2}\right)\,,\qquad 
\left|\bm{r}_{2}\right| \leq C_{\l}\langle v\rangle^{\l-2}\langle \vet\rangle^{2}\sin^{2}\left(\tfrac{\theta}{2}\right),\\
\left|\bm{r}_{3}\right| &\leq C_{\l}\langle v\rangle^{\l-4}\langle \vet\rangle^{4}\sin^{2}\left(\tfrac{\theta}{2}\right)\,.
\end{cases}\end{equation}
\end{lem}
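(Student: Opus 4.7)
The plan is to derive an exact closed form for $\langle v'\rangle^{2}$, raise it to the power $\l/2$ via controlled Taylor expansion, and finally dispatch the residues. Parametrising $\sigma=\cos\theta\,\widehat{u}+\sin\theta\,\omega$ for $\omega\in\S^{d-2}(\widehat{u})$, one finds $v'=v-\sin^{2}(\theta/2)(v-\vet)+|v-\vet|\sin(\theta/2)\cos(\theta/2)\,\omega$; since $(v-\vet)\cdot\omega=0$ and $c_\theta^{2}+s_\theta^{2}=1$ (writing $s_\theta=\sin(\theta/2)$, $c_\theta=\cos(\theta/2)$), direct squaring delivers the key algebraic identity
$$\langle v'\rangle^{2}=c_\theta^{2}\langle v\rangle^{2}+s_\theta^{2}\langle\vet\rangle^{2}+2|v-\vet|s_\theta c_\theta(v\cdot\omega).$$
Setting $A=c_\theta^{2}\langle v\rangle^{2}$, $B=s_\theta^{2}\langle\vet\rangle^{2}$, $C=2|v-\vet|s_\theta c_\theta(v\cdot\omega)$, one has $\langle v'\rangle^{\l}=(A+B+C)^{\l/2}$.

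Taylor's formula with integral remainder in $C$ around $C=0$, together with the exact identity $(A+B)^{\l/2}=A^{\l/2}+B^{\l/2}+\tfrac{\l(\l-2)}{4}\int_{0}^{A}\!\int_{0}^{B}(x+y)^{\l/2-2}\,dy\,dx$ and the first-order expansion $(A+B)^{\l/2-1}=A^{\l/2-1}+\tfrac{\l-2}{2}\int_{0}^{B}(A+s)^{\l/2-2}\,ds$ applied inside the linear-in-$C$ term, makes three explicit pieces appear: $A^{\l/2}=c_\theta^{\l}\langle v\rangle^{\l}$, giving after subtracting $\langle v\rangle^{\l}$ the explicit term $\langle v\rangle^{\l}(c_\theta^{\l}-1)$; $B^{\l/2}=s_\theta^{\l}\langle\vet\rangle^{\l}$; and the linear coefficient $\tfrac{\l}{2}A^{\l/2-1}C=\l c_\theta^{\l-1}\langle v\rangle^{\l-2}|v-\vet|s_\theta(v\cdot\omega)$. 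The three remaining residual contributions -- the cross-term double integral, the $B$-correction to $(A+B)^{\l/2-1}$, and the quadratic-in-$C$ Taylor remainder -- are regrouped into $\bm{r}_{1}+\bm{r}_{2}+\bm{r}_{3}$.

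Each residue is bounded using the pointwise inequalities $A+B+tC\leq(\langle v\rangle+\langle\vet\rangle)^{2}$, $|v\cdot\omega|\leq\langle v\rangle$, and $|v-\vet|\leq\langle v\rangle+\langle\vet\rangle$. Splitting the cross-term double integral along $\{x\leq y\}\cup\{x>y\}$ produces a bound of shape $c_\theta^{2}s_\theta^{\l-2}\langle v\rangle^{2}\langle\vet\rangle^{\l-2}+c_\theta^{\l-2}s_\theta^{2}\langle v\rangle^{\l-2}\langle\vet\rangle^{2}$; the second summand matches $\bm{r}_{2}$, while the first is dispatched by a case distinction on $\langle v\rangle$ versus $\langle\vet\rangle$: in the regime $\langle\vet\rangle\geq\langle v\rangle$ one absorbs it into $\bm{r}_{1}$ via $\langle v\rangle^{2}\langle\vet\rangle^{\l-2}\leq\langle v\rangle\langle\vet\rangle^{\l-1}$ combined with $s_\theta^{\l-2}\leq s_\theta^{\l-3}$, whereas in the regime $\langle\vet\rangle<\langle v\rangle$ one uses $\langle\vet\rangle^{\l-2}\leq\langle v\rangle^{\l-6}\langle\vet\rangle^{4}$ (requiring $\l>6$ and $\langle\cdot\rangle\geq 1$) to fit $\bm{r}_{3}$. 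The $C^{2}$-remainder contributes, after $C^{2}\lesssim(\langle v\rangle+\langle\vet\rangle)^{2}\langle v\rangle^{2}s_\theta^{2}c_\theta^{2}$, terms of shape $\langle v\rangle^{\l-2}\langle\vet\rangle^{2}s_\theta^{2}$ and $\langle v\rangle^{\l-4}\langle\vet\rangle^{4}s_\theta^{2}$ which match $\bm{r}_{2}$ and $\bm{r}_{3}$ directly; the $B$-correction in the linear-$C$ piece is handled identically.

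The main obstacle will be this last dispatch, specifically the extraction of the asymmetric remainder $\bm{r}_{1}$ with its unusually high power $s_\theta^{\l-3}$ and atypical weight $\langle v\rangle\langle\vet\rangle^{\l-1}$: this specific form is not delivered automatically by the Taylor expansion but only through the case distinction $\langle v\rangle\lessgtr\langle\vet\rangle$ applied to the cross-term integral. The hypothesis $\l>6$ enters exactly through this mechanism, making $\langle\vet\rangle^{\l-6}\leq\langle v\rangle^{\l-6}$ available in the opposite regime and ensuring a nonnegative exponent so that the residual term absorbs harmlessly into $\bm{r}_{3}$.
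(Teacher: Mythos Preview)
The paper does not give its own proof here; it simply cites \cite[Lemma 2.5]{AMSY}. Your strategy is the natural one, and your handling of the cross-term double integral is correct (this is indeed where the hypothesis $\l>6$ enters). However, your dispatch of the quadratic-in-$C$ Taylor remainder has a genuine gap. With only the inequalities you list, namely $|v\cdot\omega|\leq\langle v\rangle$ and $A+B+tC\leq(\langle v\rangle+\langle\vet\rangle)^{2}$, the $C^{2}$-remainder is bounded by $(\langle v\rangle+\langle\vet\rangle)^{\l-2}\langle v\rangle^{2}s_\theta^{2}$, and this does \emph{not} reduce to the two shapes you claim. In the regime $\langle v\rangle\geq\langle\vet\rangle$ it yields a term $\langle v\rangle^{\l}s_\theta^{2}$; in the opposite regime it yields $\langle v\rangle^{2}\langle\vet\rangle^{\l-2}s_\theta^{2}$. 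Neither fits into any $\bm{r}_{j}$: both carry only $s_\theta^{2}$, too few for $\bm{r}_{1}$, while the weight distribution is wrong for $\bm{r}_{2}$ and $\bm{r}_{3}$ (take for instance $\vet=0$, $|v|\to\infty$ in the first case). The $B$-correction, which you say is ``handled identically'', inherits the same problem.

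The missing ingredient is that $\omega\perp(v-\vet)$ forces $v\cdot\omega=\vet\cdot\omega$, hence $|v\cdot\omega|\leq\min(|v|,|\vet|)$, so that one also has $C^{2}\lesssim s_\theta^{2}\langle\vet\rangle^{2}(\langle v\rangle+\langle\vet\rangle)^{2}$. In the regime $\langle v\rangle\geq\langle\vet\rangle$ this immediately gives $C^{2}\cdot(A+B+tC)^{\l/2-2}\lesssim s_\theta^{2}\langle\vet\rangle^{2}\langle v\rangle^{\l-2}$, matching $\bm{r}_{2}$. In the regime $\langle\vet\rangle>\langle v\rangle$ one must further split according to $s_\theta\langle\vet\rangle\lessgtr\langle v\rangle$: when $s_\theta\langle\vet\rangle\leq\langle v\rangle$ one checks $A+B+|C|\lesssim\langle v\rangle^{2}$ and the remainder again fits into $\bm{r}_{2}$; when $s_\theta\langle\vet\rangle>\langle v\rangle$ one has $A+B+|C|\lesssim s_\theta^{2}\langle\vet\rangle^{2}$, so that $(A+B+tC)^{\l/2-2}$ contributes an extra factor $s_\theta^{\l-4}$, and the remainder becomes $\lesssim s_\theta^{\l-2}\langle v\rangle^{2}\langle\vet\rangle^{\l-2}\leq s_\theta^{\l-3}\langle v\rangle\langle\vet\rangle^{\l-1}$, matching $\bm{r}_{1}$. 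The $B$-correction is handled by the same refined case analysis.
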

\subsection{Proof of Proposition \ref{prop:AMSY}}\label{app:AMSY}
With the notations introduced earlier, we can prove Prop. \ref{prop:AMSY}.
\begin{proof}[Proof of Proposition \ref{prop:AMSY}]With the decomposition of $\langle v'\rangle^{\l}-\langle v\rangle^{\l}$ in \eqref{eq:Lem25}, and with $\d\mu=\d v\d\vet\d\sigma$, one has that
$$
\int_{\R^{6}\times \S^{2}}b(\cos\theta)|v-\vet|^{\g}\left(\langle v'\rangle^{\l}-\langle v\rangle^{\l}\right)f_{\ast} g\,h'\d \mu= \sum_{j=1}^{6}\Gamma_{j}\,,
$$
with
$$\Gamma_{1}:=\l\int_{\R^{6}\times \S^{2}}b(\cos\theta)|v-\vet|^{\g+1}\langle v\rangle^{\l-2}\left(v\cdot \omega\right)\cos^{\l-1}\left(\tfrac{\theta}{2}\right)\sin\tfrac{\theta}{2}f_{\ast}g\,h'\d\mu,$$
$$\Gamma_{2}=\int_{\R^{6}\times \S^{2}}b(\cos\theta)|v-\vet|^{\g}\langle \vet\rangle^{\l}\sin^{\l}\left(\tfrac{\theta}{2}\right)f_{\ast}g\,h'\d\mu\,,$$
$$\Gamma_{2+k}=\int_{\R^{6}\times \S^{2}}b(\cos\theta)|v-\vet|^{\g} \bm{r}_{k}f_{\ast}gh'\d\mu, \quad k=1,2,3,$$
and
$$\Gamma_{6}=\int_{\R^{6}\times \S^{2}}b(\cos\theta)|v-\vet|^{\g}\left(\cos^{\l}\left(\tfrac{\theta}{2}\right)-1\right)\langle v\rangle^{\l}f_{\ast}g\,h'\d\mu.$$ 
Using Cauchy-Schwarz inequality, one has that, for $\alpha+\beta=2\l$,
\begin{multline*}
\left|\Gamma_{2}\right| \leq 
\left(\int_{\R^{6}\times \S^{2}}b(\cos\theta)|v-\vet|^{\g}\langle \vet\rangle^{2\l}f_{\ast}^{2}\sin^{\alpha}\left(\tfrac{\theta}{2}\right) \,|g|\d\mu\right)^{\frac{1}{2}}\\
\left(\int_{\R^{6}\times \S^{2}}b(\cos\theta)|v-\vet|^{\g} \sin^{\beta}\left(\tfrac{\theta}{2}\right)|g|\,|h'|^{2}\d\mu\right)^{\frac{1}{2}}.
\end{multline*}
Using now  the singular change of variable (see \cite{ADVW})
\begin{equation}\label{eq:sing}
\int_{\R^{3}\times \S^{2}}b(\cos\theta)|v-\vet|^{\g}\varphi(v')\d \sigma\d\vet=\int_{\R^{3}\times\S^{2}}|v-\vet|^{\g}b(\cos\theta)\sin^{-3-\g}\left(\tfrac{\theta}{2}\right)\varphi(\vet)\d\sigma\d\vet,
\end{equation} 
and choosing $\alpha=\beta-3-\g=\l-\frac{3+\g}{2}$, we deduce that 
\begin{multline*}
\left|\Gamma_{2}\right| \leq 
\left(\int_{\R^{6}\times \S^{2}}b(\cos\theta)|v-\vet|^{\g}\langle \vet\rangle^{2\l}f_{\ast}^{2}\sin^{\l-\frac{3+\g}{2}}\left(\tfrac{\theta}{2}\right) \,|g|\d\mu\right)^{\frac{1}{2}}\\
\left(\int_{\R^{6}\times \S^{2}}b(\cos\theta)|v-\vet|^{\g}\,\sin^{\l-\frac{3+\g}{2}}\left(\tfrac{\theta}{2}\right)|g|\,|h_{\ast}|^{2}\d\mu\right)^{\frac{1}{2}}.
\end{multline*}
Using Cauchy-Schwarz inequality yields
\begin{multline*}
\left|\Gamma_{2}\right| \leq  
\left(\int_{\S^{2}}b(\cos\theta)  \sin^{\l-\frac{3+\g}{2}}\left(\tfrac{\theta}{2}\right)\d\sigma\right)
\left(\int_{\R^{6}} |v-\vet|^{\g}\langle \vet\rangle^{2\l}f_{\ast}^{2}\,|g|\d\vet\d v\right)^{\frac{1}{2}}\\
\left(\int_{\R^{6}} |v-\vet|^{\g}  |g|\,|h_{\ast}|^{2}\d\vet\d v\right)^{\frac{1}{2}}.
\end{multline*}
Assuming $\l > \frac{3+\g}{2}+2s$, one has that
$$\int_{\S^{2}}b(\cos\theta)  \sin^{\l-\frac{3+\g}{2}}\left(\tfrac{\theta}{2}\right)\d\sigma=\bm{C}_{2,\l}(b) < \infty\,,$$
and
$$|\Gamma_{2}| \leq \bm{C}_{2,\l}(b)\left(\int_{\R^{3}}\bm{c}_{\g}[|g|](v)\left(\langle v\rangle^{\l}f(v)\right)^{2}\d v\right)^{\frac{1}{2}}\left(\int_{\R^{3}}\bm{c}_{\g}[|g|](v)\,h^{2}(v)\d v\right)^{\frac{1}{2}}.$$
In the same way, using \eqref{eq:r1},
\begin{multline*}
\left|\Gamma_{3}\right| \leq C_{\l}\int_{\R^{6}\times \S^{2}}b(\cos\theta)|v-\vet|^{\g} \langle v\rangle\,\langle \vet\rangle^{\l-1}\sin^{\l-3}\left(\tfrac{\theta}{2}\right) |f_{\ast}gh'|\d\mu\\
\leq C_{\l}\left(\int_{\R^{6}\times\S^{2}}b(\cos\theta) {|v-\vet|^{\g}}\sin^{\l-\frac{9+\g}{2}}\left(\tfrac{\theta}{2}\right)\langle v\rangle\,|g|\left(\langle \vet\rangle^{\l-1}f_{\ast}\right)^{2}\d\mu\right)^{\frac{1}{2}}\\
\left(\int_{\R^{6}\times\S^{2}}{|v-\vet|^{\g}}b(\cos\theta)\sin^{\l-\frac{9+\g}{2}}\left(\tfrac{\theta}{2}\right)\langle v\rangle\,|g|\,|h_{\ast}|^{2}\d\mu\right)^{\frac{1}{2}}\,.\end{multline*}
With $\l > \frac{9+\g}{2}+2s$, one has that
$$\int_{\S^{2}}b(\cos\theta)\sin^{\l-\frac{9+\g}{2}}\left(\tfrac{\theta}{2}\right)\d\sigma=\bm{C}_{3,\l}(b) <\infty\,,$$
and 
$$\left|\Gamma_{3}\right| \leq \bm{C}_{3,\l}(b)\left(\int_{\R^{3}}\bm{c}_{\g}\left[\langle \cdot\rangle\,|g|\right]\left(\langle \cdot\rangle^{\l-1}f\right)^{2}\d v\right)^{\frac{1}{2}}
\left(\int_{\R^{3}}\bm{c}_{\g}\left[\langle \cdot\rangle\,|g|\right]\,h^{2}\d v\right)^{\frac{1}{2}}.$$
For $\Gamma_{4}$, using the bound for $|\bm{r}_{2}|$ in \eqref{eq:r1}, it holds that
$$|\Gamma_{4}| \leq C_{\l}\int_{\R^{6}\times\S^{2}}b(\cos\theta)\sin^{2}\left(\tfrac{\theta}{2}\right)|v-\vet|^{\g}\langle\vet\rangle^{2}|f_{\ast}|\langle v\rangle^{\l-2}|g|\,|h'|\d\mu\,,$$
so that
\begin{multline*}
|\Gamma_{4}|\leq C_{\l}\left(\int_{\R^{6}\times\S^{2}}\langle \vet\rangle^{2}|f_{\ast}|b(\cos\theta)\sin^{2}\left(\tfrac{\theta}{2}\right)|v-\vet|^{\g}\left(\langle v\rangle^{\l-2}g(v)\right)^{2}\d\mu\right)^{\frac{1}{2}}\\
\left(\int_{\R^{6}\times\S^{2}}\langle \vet\rangle^{2}|f_{\ast}|b(\cos\theta)\sin^{2}\left(\tfrac{\theta}{2}\right)|v-\vet|^{\g}\left(h(v')\right)^{2}\d\mu\right)^{\frac{1}{2}}\,.
\end{multline*}
Notice that
$$\int_{\S^{2}}b(\cos\theta)\sin^{2}\left(\tfrac{\theta}{2}\right)\d\sigma=\bm{C}(b) < \infty\,,$$
from which we conclude that
\begin{multline*}
\int_{\R^{6}\times\S^{2}}\langle \vet\rangle^{2}|f_{\ast}|b(\cos\theta)\sin^{2}\left(\tfrac{\theta}{2}\right)|v-\vet|^{\g}\left(\langle v\rangle^{\l-2}g(v)\right)^{2}\d\mu \\
\leq \bm{C}(b)\int_{\R^{3}}\bm{c}_{\g}\left[\langle \cdot\rangle^{2}|f|\right]\left(\langle\cdot\rangle^{\l-2}g\right)^{2}\d v.\end{multline*}
In the same way, using the regular change of variable (see \cite{ADVW})
\begin{equation}\label{eq:reg}
\int_{\R^{3}\times \S^{2}}b(\cos\theta)|v-\vet|^{\g}\varphi(v')\d \sigma\d v=\int_{\R^{3}\times\S^{2}}{|v-\vet|^{\g}}b(\cos\theta)\cos^{-3-\g}\left(\tfrac{\theta}{2}\right)\varphi(v)\d\sigma\d v\,,
\end{equation} 
and since
$$\int_{\S^{2}}b(\cos \theta)\sin^{2}\left(\tfrac{\theta}{2}\right)\cos^{-3-\g}\left(\tfrac{\theta}{2}\right)\d\sigma \leq {\bm{C}}(b) < \infty\,,$$
we deduce that
$$\int_{\R^{6}\times\S^{2}}\langle \vet\rangle^{2}|f_{\ast}|b(\cos\theta)\sin^{2}\left(\tfrac{\theta}{2}\right)|v-\vet|^{\g}\left(h(v')\right)^{2}\d\mu 
\leq \tilde{\bm{C}}(b)\int_{\R^{3}}\bm{c}_{\g}\left[\langle \cdot\rangle^{2}|f|\right]h^{2}\d v\,,$$
where, with $\bm{C}_{4,\l}(b)=C_{\l}\bm{C}(b)$, it holds
$$|\Gamma_{4}| \leq \bm{C}_{4,\l}(b) \left(\int_{\R^{3}}\bm{c}_{\g}\left[\langle \cdot\rangle^{2}|f|\right]\left(\langle\cdot\rangle^{\l-2}g\right)^{2}\d v\right)^{\frac{1}{2}}\,\left(\int_{\R^{3}}\bm{c}_{\g}\left[\langle \cdot\rangle^{2}|f|\right]h^{2}\d v\right)^{\frac{1}{2}}.$$
The exact same computations show that 
$$|\Gamma_{5}| \leq \bm{C}_{5,\l}(b)\left(\int_{\R^{3}}\bm{c}_{\g}\left[\langle \cdot\rangle^{4}|f|\right]\left(\langle\cdot\rangle^{\l-4}g\right)^{2}\d v\right)^{\frac{1}{2}}\,\left(\int_{\R^{3}}\bm{c}_{\g}\left[\langle \cdot\rangle^{4}|f|\right]h^{2}\d v\right)^{\frac{1}{2}}.$$
The term $\Gamma_{6}$ is estimated similarily since
$$\int_{\S^{2}}b(\cos\theta)\left|1-\cos^{\l}\left(\tfrac{\theta}{2}\right)\right| \d\sigma \leq \bm{C}_{6,\l}(b),$$
consequently,
$$|\Gamma_{6}| \leq \bm{C}_{6,\l}(b)\left(\int_{\R^{3}}\bm{c}_{\g}[|f|]\left(\langle \cdot\rangle^{\l}g\right)^{2}\d v\right)^{\frac{1}{2}}\left(\int_{\R^{3}}\bm{c}_{\g}[|f|]h^{2}\d v\right)^{\frac{1}{2}}.$$
The estimate for $\Gamma_{1}$ is more delicate and one splits, as in \cite[Proposition 3.1]{AMSY},
$$\Gamma_{1}=\Gamma_{1,1}+\Gamma_{1,2}$$
where
$$\Gamma_{1,1}=\l \int_{\R^{6} \times \S^{2}}b(\cos\theta)|v-\vet|^{\g+1} \langle v\rangle^{\l-2} \left(\vet\cdot \tilde{\omega}\right)\cos^{\l}\left(\tfrac{\theta}{2}\right)\sin\tfrac{\theta}{2}\,f_{\ast}g h'\d\mu,$$
$$\Gamma_{1,2}=\l \int_{\R^{6} \times \S^{2}}b(\cos\theta)|v-\vet|^{\g+1}\langle v\rangle^{\l-2}\left(\vet\cdot \frac{v'-\vet}{|v'-\vet|}\right)\cos^{\l-1}\left(\tfrac{\theta}{2}\right)\sin^{2}\left(\tfrac{\theta}{2}\right)f_{\ast} g h' \d\mu,$$
with $\tilde{\omega}=\frac{v'-v}{|v'-v|}$. To estimate $\Gamma_{1,2}$, one first notices that
$$|\Gamma_{1,2}| \leq \l  \int_{\R^{6} \times \S^{2}}b(\cos\theta)\cos^{\l-1}\left(\tfrac{\theta}{2}\right)\sin^{2}\left(\tfrac{\theta}{2}\right)|v-\vet|^{\g}\langle \vet\rangle^{2} |f_{\ast}| \langle v\rangle^{\l-1}|g|\,|h'|\d\mu.$$
Then, observing that
$$\l\int_{\S^{2}}b(\cos \theta)\cos^{\l-1}\left(\tfrac{\theta}{2}\right)\sin^{2}\left(\tfrac{\theta}{2}\right)\d\sigma =\bm{C}_{1,\l}(b) < \infty,$$
a simple use of Cauchy-Schwarz inequality and the regular change of variable \eqref{eq:reg} give
$$|\Gamma_{1,2}| \leq \bm{C}_{1,\l}(b) \left(\int_{\R^{3}}\bm{c}_{\g}[\langle \cdot \rangle^{2} |f|]\left(\langle v\rangle^{\l-1} g\right)^{2}\d v\right)^{\frac{1}{2}} \left(\int_{\R^{3}}\bm{c}_{\g}[\langle \cdot \rangle^{2} |f|]h^{2}\d v\right)^{\frac{1}{2}}.$$
Now,  as in \cite[Proposition 3.1]{AMSY}, one has that
$$\Gamma_{1,1}=\l \int_{\R^{6} \times \S^{2}}b(\cos\theta)|v-\vet|^{\g+1} \left(\vet\cdot \tilde{\omega}\right)\cos^{\l}\left(\tfrac{\theta}{2}\right)\sin\tfrac{\theta}{2}\,f_{\ast}\left[G-G'\right] h'\d\mu,$$
where $G=\langle \cdot \rangle^{\l-2}g$ so that
$$|\Gamma_{1,1}|\leq \l \int_{\R^{6} \times \S^{2}}b(\cos\theta)|v-\vet|^{\g+1} \sin\tfrac{\theta}{2}\,\left( \langle \vet\rangle |f_{\ast}|\right)\left|G-G'\right|\, |h'|\d\mu.$$
Using again Cauchy-Schwarz inequality
\begin{multline*}
|\Gamma_{1,1}| \leq \l \left(\int_{\R^{6} \times \S^{2}}b(\cos\theta)|v-\vet|^{\g+2}\left( \langle \vet\rangle |f_{\ast}|\right)\left|G-G'\right|^{2}\d\mu\right)^{\frac{1}{2}}\\
\left(\int_{\R^{6} \times \S^{2}}b(\cos\theta)|v-\vet|^{\g}\sin^{2}\left(\tfrac{\theta}{2}\right)\left( \langle \vet\rangle |f_{\ast}|\right)|h'|^{2}\d\mu\right)^{\frac{1}{2}}.
\end{multline*}
Using the regular change of variables \eqref{eq:reg} in the last integral and the fact that
$$\int_{\S^{2}}b(\cos\theta)\sin^{2}\left(\tfrac{\theta}{2}\right)\d\sigma \leq \bm{C}(b),$$
we deduce, with the notations introduced in Remark \ref{rmq:Dgamma},  that
$$|\Gamma_{1,1}| \leq \l \sqrt{\bm{C}(b)}\,\sqrt{\mathscr{D}_{\g+2}\left[\langle \cdot \rangle |f|\,,\,G\right]}\left(\int_{\R^{3}}\bm{c}_{\g}[\langle \cdot \rangle\,|f|]h^{2}\d v\right)^{\frac{1}{2}}.$$
Combining these estimates, identifying the largest terms in the various estimates, yield the result.
\end{proof}

\end{document}